\documentclass[12pt, reqno]{amsart}
\usepackage{amsmath, amsthm, amscd, amsfonts, amssymb, graphicx, color}
\usepackage[bookmarksnumbered, colorlinks, plainpages]{hyperref}
\allowdisplaybreaks

\usepackage{microtype}

\hypersetup{
    colorlinks=true,    
    linkcolor=black,    
    filecolor=black,
    urlcolor=blue,      
    citecolor=black,    
}

\usepackage{titletoc}

\titlecontents{section}
  [0em]                              
  {\vspace{0.5em}}                   
  {\thecontentslabel\hspace{0.8em}}  
  {}                                 
  {\titlerule*[0.5pc]{.}\contentspage} 

\titlecontents{subsection}
  [1.5em]
  {}
  {\thecontentslabel\hspace{0.8em}}  
  {}
  {\titlerule*[0.5pc]{.}\contentspage}

\newtheorem{theorem}{Theorem}[section]
\newtheorem{lemma}[theorem]{Lemma}
\newtheorem{proposition}[theorem]{Proposition}

\theoremstyle{definition}
\newtheorem{definition}[theorem]{Definition}
\newtheorem{example}[theorem]{Example}

\theoremstyle{remark}
\newtheorem{remark}[theorem]{Remark}
\numberwithin{equation}{section}
\usepackage{titlesec}
\titleformat{\section}[hang]{\normalfont\Large\bfseries}{\thesection.}{0.5em}{}
\titleformat{\subsection}[hang]{\normalfont\large\bfseries}{\thesubsection.}{0.5em}{}
\titleformat{\subsubsection}[hang]
  {\normalfont\normalsize\bfseries}
  {\thesubsubsection.}
  {0.5em}
  {}
\DeclareMathOperator{\Irred}{Irred}
\DeclareMathOperator{\Pol}{Pol}
  
\begin{document}
\setcounter{page}{1}

\centerline{}

\centerline{}

\title[GH convergence of spectral truncation for  compact quantum groupS]{Gromov-Hausdorff Convergence of Spectral Truncations for Compact  Quantum GroupS
}

\author{Xintao Peng , Qin Wang}



\begin{abstract}

We study the quantum Gromov–Hausdorff convergence of spectral truncations for compact quantum groups. Using a proper length function, we define a Dirac operator and the associated spectral truncations. This work extends the previous convergence results for tori (Leimbach–van Suijlekom) to a broad class of quantum groups, and provides Gromov–Hausdorff convergence result for spectral truncations on quantum groups, encompassing both compact  and discrete quantum groups. Our results are applicable to $SU(N)$,$SO(N)$ and discrete quantum groups of rapid decay.
\end{abstract}

\maketitle


\section{Introduction}

Rieffel~\cite{9} introduced compact quantum metric spaces, extending the Gromov--Hausdorff distance to the noncommutative setting. Since then, spectral truncations in noncommutative geometry have been studied by Connes and van Suijlekom~\cite{1}, later by van Suijlekom~\cite{5}, and by Leimbach and van Suijlekom~\cite{4}, who established convergence results for noncommutative tori. Rieffel~\cite{11} also considered Fourier truncations for compact quantum groups and finitely generated groups, while Leimbach~\cite{6} studied the convergence of Peter--Weyl truncations for compact quantum groups. Nevertheless, a systematic treatment of spectral truncations for compact quantum groups has remained largely unavailable.

A crucial step toward such a theory was made by Austad and Kyed~\cite{16}. Let $\mathbb{G}$ be a compact quantum group equipped with a proper length function $\ell:\Irred(\mathbb{G})\to[0,\infty)$. As shown in~\cite{23}, one obtains a spectral triple
\[
\bigl(\Pol(\mathbb{G}),\, L^{2}(\mathbb{G}),\, D_{\ell}\bigr),
\]
where $\Pol(\mathbb{G})$ is the Hopf $*$-algebra of representative functions, $L^{2}(\mathbb{G})$ is the GNS Hilbert space of the Haar state, and $D_{\ell}$ acts on matrix coefficients by
\[
D_{\ell}\bigl(\Lambda(u_{i,j}^{\alpha})\bigr)=\ell(\alpha)\,\Lambda(u_{i,j}^{\alpha}).
\]
However, this spectral triple does not automatically define a compact quantum metric space; one must additionally verify that the associated Lipschitz seminorm induces the weak$^{*}$ topology on the state space.

Austad and Kyed proved that, for coamenable compact quantum groups of Kac type, compact quantum metric properties of the full algebra are equivalent to those of the central subalgebra. More precisely, if $L_{\ell}$ denotes the Lip-norm associated with $\ell$ and $L_{z,\ell}$ its restriction to the central part, then $(C(\mathbb{G}),L_{\ell})$ is a compact quantum metric space if and only if $(C_{z}(\mathbb{G}),L_{z,\ell})$ is. This reduction allows one to study the metric structure through central functions or, equivalently, through the fusion algebra
\[
F(\mathbb{G})=\operatorname{span}_{\mathbb{C}}\{\alpha:\alpha\in\Irred(\mathbb{G})\}.
\]
In particular, all three settings---$C(\mathbb{G})$, $C_{z}(\mathbb{G})$, and $F(\mathbb{G})$---carry compatible metric information.

The present paper studies spectral truncations for compact quantum groups and their convergence in quantum Gromov--Hausdorff distance. We first recall the definition of quantum Gromov--Hausdorff distance between compact quantum metric spaces.

\begin{definition}[Quantum Gromov--Hausdorff distance]
Let $(A,L_{A})$ and $(B,L_{B})$ be compact quantum metric spaces. Let $\mathcal{M}(L_{A},L_{B})$ denote the set of Lip-norms on $A\oplus B$ that induce $L_{A}$ and $L_{B}$. The quantum Gromov--Hausdorff distance between $A$ and $B$ is
\[
\operatorname{dist}_{q}(A,B)
=
\inf\Bigl\{
\operatorname{dist}_{H}^{d^{L}}\bigl(S(A),S(B)\bigr)
:\, L\in\mathcal{M}(L_{A},L_{B})
\Bigr\}.
\]
\end{definition}

We then consider spectral truncations associated with a spectral triple $(A,\mathcal{H},D)$, where $A$ acts on a Hilbert space $\mathcal{H}$ and $D$ is self-adjoint with compact resolvent. For a cutoff $\Lambda$, let $P_{\Lambda}$ be the spectral projection of $D$ onto the eigenspaces with eigenvalues of modulus at most $\Lambda$. The truncated algebra is $P_{\Lambda}AP_{\Lambda}$ acting on $P_{\Lambda}\mathcal{H}$, with truncated operator $D_{\Lambda}=D|_{P_{\Lambda}\mathcal{H}}$. Our goal is to determine when these truncations converge to the original algebra in quantum Gromov--Hausdorff distance.

For compact quantum groups, we focus on the fusion algebra
\[
F(\mathbb{G})=\operatorname{span}_{\mathbb{C}}\{\alpha:\alpha\in\Irred(\mathbb{G})\},
\]
equipped with the natural $\ell^{2}$-structure coming from the dimensions $d(\alpha)$. We say that $F(\mathbb{G})$ has the rapid decay property if there exist constants $C>0$ and $s>0$ such that
\[
\|\Lambda(f)\|\le C\|f\|_{2,s}
\]
for all $f\in F(\mathbb{G})$. We also introduce polynomial growth and strong polynomial growth with respect to a length function $\ell$.

Our first main result shows that polynomial growth yields compact quantum metric structures on higher commutator seminorms.

\begin{theorem}
If $\mathbb{G}$ is a compact quantum group with polynomial growth of order $s$, then for every integer $k>s$, the pair $\bigl(F(\mathbb{G}),L_{\ell}^{k}\bigr)$ is a compact quantum metric space.
\end{theorem}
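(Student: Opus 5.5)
The plan is to verify Rieffel's criterion for $\bigl(F(\mathbb{G}),L_{\ell}^{k}\bigr)$. It suffices to show two things. First, the kernel of $L_{\ell}^{k}$ consists of the scalar multiples of the unit $e$ (the trivial representation). Second, the set
\[
\mathcal{B}_{k}=\{f\in F(\mathbb{G}) : L_{\ell}^{k}(f)\le 1,\ f(e)=0\}
\]
is totally bounded for the operator norm. Throughout, $f=\sum_{\alpha}f(\alpha)\,\alpha$ is a finite sum, $f$ acts on $L^{2}(\mathbb{G})$ via $\Lambda$, and $L_{\ell}^{k}(f)$ is the norm of the $k$-fold commutator $[D_{\ell},[D_{\ell},\dots,[D_{\ell},f]\dots]]$.

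\textbf{Step 1 (coefficient control).} Apply the $k$-fold commutator to the cyclic vector $\Lambda(1)=\Lambda(e)$. Since $D_{\ell}\Lambda(1)=0$, all terms in the expansion except the first vanish. The characters $\chi_{\alpha}$ are $D_{\ell}$-eigenvectors with eigenvalue $\ell(\alpha)$, so
\[
[D_{\ell},\dots,[D_{\ell},f]\dots]\Lambda(1)=\sum_{\alpha}\ell(\alpha)^{k}f(\alpha)\Lambda(\chi_{\alpha}).
\]
The characters of irreducibles are orthonormal in $L^{2}(\mathbb{G})$, which gives
\[
\Bigl(\sum_{\alpha}\ell(\alpha)^{2k}|f(\alpha)|^{2}\Bigr)^{1/2}\le L_{\ell}^{k}(f).
\]
Properness of $\ell$ gives $\ell(\alpha)>0$ for $\alpha\neq e$. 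Hence $L_{\ell}^{k}(f)=0$ forces $f(\alpha)=0$ for all $\alpha\ne e$, which settles the kernel.

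\textbf{Step 2 (norm estimate via growth).} Use $\|\Lambda(\chi_{\alpha})\|\le d(\alpha)$ and Cauchy--Schwarz. For $f\in\mathcal{B}_{k}$ and any finite set $S\subset \Irred(\mathbb{G})\setminus\{e\}$ this gives
\[
\Bigl\|\sum_{\alpha\notin S\cup\{e\}} f(\alpha)\alpha\Bigr\|\le \Bigl(\sum_{\alpha\notin S\cup\{e\}} d(\alpha)^{2}\ell(\alpha)^{-2k}\Bigr)^{1/2}.
\]
The key point is that polynomial growth of order $s$ (a bound of the form $\sum_{\ell(\alpha)\le n}d(\alpha)^{2}\le C(1+n)^{s}$, or its analogue in the paper's normalization) makes $\sum_{\alpha\ne e}d(\alpha)^{2}\ell(\alpha)^{-2k}$ finite when $k>s$. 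To see this, I would split $\Irred(\mathbb{G})\setminus\{e\}$ into shells $n\le \ell(\alpha)<n+1$ and apply Abel summation. I would also use that $\ell$ is bounded below by some $c>0$ away from $e$; this holds because properness gives finitely many $\alpha$ in each ball. The resulting series is dominated by $\sum_{n}n^{s-1-2k}$, which converges comfortably once $k>s$. I expect this step to be the main obstacle: matching the paper's exact definition of polynomial growth and its order convention to the exponent $2k$, and handling the small-length shell near zero.

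\textbf{Step 3 (total boundedness).} Given $\varepsilon>0$, choose a finite $S$ (a ball $\{\ell(\alpha)\le R\}$, which is finite by properness) so that the tail in Step 2 is less than $\varepsilon$. Every $f\in\mathcal{B}_{k}$ is then within $\varepsilon$ of its truncation $\sum_{\alpha\in S}f(\alpha)\alpha$. By Step 1, these truncations lie in a bounded subset of the finite-dimensional space $\operatorname{span}\{\alpha:\alpha\in S\}$, since $|f(\alpha)|\le \ell(\alpha)^{-k}$. That subset is totally bounded, so $\mathcal{B}_{k}$ is totally bounded, and Rieffel's criterion yields that $\bigl(F(\mathbb{G}),L_{\ell}^{k}\bigr)$ is a compact quantum metric space.
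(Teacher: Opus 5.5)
Your proposal is correct and follows essentially the paper's route: Rieffel's total-boundedness criterion, coefficient control from $\delta^{k}(f)$ applied to the unit vector, and a bounded finite-dimensional low-length part plus a small high-length tail via Cauchy--Schwarz against the growth bound. The only differences are that you use the weight $\ell(\alpha)^{k}$ directly, with the vanishing tail of $\sum_{\alpha\neq e}d(\alpha)^{2}\ell(\alpha)^{-2k}$ (finite under the paper's convention $\sum_{\ell(\alpha)\in(n-1,n]}d(\alpha)^{2}\le c\,n^{s}$, since an integer $k>s$ forces $2k-s>1$), instead of the rapid decay estimate of Proposition~\ref{prposition rapid decay} and the factor $n^{s-k}$; and that your argument, which only uses $\Lambda(1)\in L^{2}_{z}(\mathbb{G})$ in Step~1 and an upper bound on $\|\chi_{\alpha}\|$ in Step~2, transfers verbatim to the paper's realization on $\ell^{2}(\Irred(\mathbb{G}))\cong L^{2}_{z}(\mathbb{G})$ (Proposition~\ref{prop unitary}).
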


This result applies, in particular, to connected compact Lie groups such as $SU(N)$ and $SO(N)$ equipped with suitable Lip-norms.

We next study spectral truncations of $C(\mathbb{G})$. Let
\[
C(\mathbb{G})_\Lambda=P_{\Lambda}C(\mathbb{G})P_{\Lambda},
\]
and define the seminorms $L_{C(\mathbb{G})_\Lambda}^{\alpha^\tau}$, $L_{C(\mathbb{G})_\Lambda}^{\beta^\tau}$, and $L_{C(\mathbb{G})_\Lambda}^{\alpha^\tau,\beta^\tau}$ from the left and right coactions as above. Our convergence theorem is as follows.

\begin{theorem}\label{1}
Let $\mathbb{G}$ be a compact quantum group equipped with a proper length function. If $\bigl(C(\mathbb{G}),L_{\ell}^{k}\bigr)$ is a compact quantum metric space, then 
\[
\operatorname{dist}_{q}\Bigl(
\bigl(C(\mathbb{G}),L_{\ell}^{k}\bigr),\,
\bigl(P_{\Lambda}C(\mathbb{G})P_{\Lambda},L_{C(\mathbb{G})_\Lambda}^{\alpha^\tau,\beta^\tau}\bigr)
\Bigr)\to 0 \qquad as \quad\Lambda\to \infty.
\]
\end{theorem}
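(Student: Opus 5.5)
We follow the Leimbach--van Suijlekom strategy for tori, adapted to the coactions of $\mathbb{G}$: build the Lip-norms on $C(\mathbb{G})\oplus P_{\Lambda}C(\mathbb{G})P_{\Lambda}$ out of two ``approximate isometry'' maps. The first is the compression $\sigma_{\Lambda}\colon C(\mathbb{G})\to P_{\Lambda}C(\mathbb{G})P_{\Lambda}$, $a\mapsto P_{\Lambda}aP_{\Lambda}$. The second goes backwards, $\rho_{\Lambda}\colon P_{\Lambda}C(\mathbb{G})P_{\Lambda}\to C(\mathbb{G})$, and we take it to be the symbol map coming from a state on the truncated system. Concretely, fix a unit vector $\xi_{\Lambda}\in P_{\Lambda}L^{2}(\mathbb{G})$ (for instance the normalized vector $\Lambda(1)$, or better a ``Fejér-type'' vector $\sum_{\ell(\alpha)\le\Lambda}c_{\alpha}\Lambda(\chi_{\alpha})$). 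Using the left and right coactions $\alpha^{\tau},\beta^{\tau}$, which preserve $P_{\Lambda}$ because $D_{\ell}$ is bi-invariant, set $\rho_{\Lambda}(T)=(\mathrm{id}\otimes\omega_{\xi_\Lambda})\alpha^{\tau}(T)$, and similarly for $\beta^{\tau}$.

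By equivariance, $\rho_{\Lambda}\circ\sigma_{\Lambda}$ and $\sigma_{\Lambda}\circ\rho_{\Lambda}$ are convolution operators on the two sides. Their kernel is the function $g_{\Lambda}(\alpha)$ built from the vector state $\xi_{\Lambda}$, i.e.\ a Fourier multiplier on $\Pol(\mathbb{G})$ acting by scalars on each isotypic component. The key steps are then as follows.

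First, we prove the contraction properties: $L_{\ell}^{k}(\rho_{\Lambda}T)\le L^{\alpha^{\tau},\beta^{\tau}}_{C(\mathbb{G})_{\Lambda}}(T)$ and $L^{\alpha^{\tau},\beta^{\tau}}_{C(\mathbb{G})_{\Lambda}}(\sigma_{\Lambda}a)\le L_{\ell}^{k}(a)$. Both follow because $\sigma_{\Lambda}$ and $\rho_{\Lambda}$ intertwine the coactions, and the truncated seminorms are defined through these coactions so that they dominate the commutator seminorms $\|[D_{\Lambda},\cdot]\|$ iterated $k$ times.

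Second, we prove the approximation bounds
\[
\|a-\rho_{\Lambda}\sigma_{\Lambda}(a)\|\le \gamma_{\Lambda}L_{\ell}^{k}(a),\qquad \|T-\sigma_{\Lambda}\rho_{\Lambda}(T)\|\le\gamma_{\Lambda}L^{\alpha^{\tau},\beta^{\tau}}_{C(\mathbb{G})_{\Lambda}}(T),
\]
with $\gamma_{\Lambda}\to0$.

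Third, we define
\[
L(a,T)=\max\bigl\{L_{\ell}^{k}(a),\,L^{\alpha^{\tau},\beta^{\tau}}_{C(\mathbb{G})_{\Lambda}}(T),\,\gamma_{\Lambda}^{-1}\|\sigma_{\Lambda}(a)-T\|\bigr\}.
\]
The standard bridge argument of Rieffel/van Suijlekom shows that $L$ induces both given seminorms; the contraction properties and the approximation bounds are exactly what is needed to extend a point on either side. Moreover, every state of one side is then within $O(\gamma_{\Lambda})$ of a state of the other. Hence $\operatorname{dist}_{q}\to0$.

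The main obstacle is the second step, producing $\gamma_{\Lambda}\to0$ uniformly. We plan to handle it via the compactness hypothesis. Since $(C(\mathbb{G}),L^{k}_{\ell})$ is a compact quantum metric space, the set $\{a: L_{\ell}^{k}(a)\le1,\ \|a\|\le r\}$ is totally bounded modulo scalars. It therefore suffices to show that $\rho_{\Lambda}\sigma_{\Lambda}\to\mathrm{id}$ pointwise on $\Pol(\mathbb{G})$ with uniformly bounded norms, and then upgrade to uniform convergence on this totally bounded set by an $\varepsilon$-net argument. Pointwise convergence amounts to $g_{\Lambda}(\alpha)\to1$ for each fixed $\alpha$. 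Since $\ell$ is proper, only finitely many $\alpha$ lie below any level, and we will choose $\xi_{\Lambda}$ so that the coefficients $g_{\Lambda}(\alpha)$ are positive definite (hence give unital completely positive, norm-one multipliers) and tend to $1$. The symmetric estimate for $\sigma_{\Lambda}\rho_{\Lambda}$ on the truncated side must hold uniformly in $\Lambda$, and this is the delicate point. There we will exploit that $P_{\Lambda}C(\mathbb{G})P_{\Lambda}$ is spanned by compressions of matrix coefficients of $\alpha$ with $\ell(\alpha)\le2\Lambda$, so that the same multiplier estimate applies.
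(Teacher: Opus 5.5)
Your architecture is the paper's: the compression $\tau$ (your $\sigma_\Lambda$), a symbol map $\sigma^{\phi}=(\phi\otimes\mathrm{id})\alpha^{\tau}$ built from a state $\phi$ on $C(\mathbb{G})_\Lambda$ (your $\rho_\Lambda$), contractivity, and a bridge. Three small corrections there. You must slice the \emph{first} leg, since $(\mathrm{id}\otimes\omega_{\xi_\Lambda})\alpha^{\tau}$ lands back in $C(\mathbb{G})_\Lambda$. Contractivity follows directly from the definition of $L^{\alpha^\tau,\beta^\tau}_{C(\mathbb{G})_\Lambda}$ as a supremum over slices together with bi-invariance of $L^k_\ell$; no comparison with commutators of $D_{\ell,\Lambda}$ is involved. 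For the bridge, the paper uses Proposition~\ref{prop:gromov hausdorff converges} with $C=C'=1$.

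However, Step~2 has a genuine gap, in two places. First, properness of $\ell$ is not what produces $\xi_\Lambda$. With $\phi_\Lambda=\omega_{\xi_\Lambda}$ one has $\sigma^{\phi_\Lambda}\tau(a)=(\tau^*\phi_\Lambda\otimes\mathrm{id})\Delta(a)$. This is a matrix-valued multiplier, acting by scalars on isotypic blocks only when $\tau^*\phi_\Lambda$ is central. It tends to $a$ on $\Pol(\mathbb{G})$ iff $\tau^*\phi_\Lambda\to\epsilon$ on matrix coefficients. Since $|\tau^*\phi_\Lambda(a)|\le\|a\|$, this forces the counit to be a bounded state on $C(\mathbb{G})$, i.e.\ coamenability. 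For $\widehat{F_2}$ with word length and $k=1$, which is a compact quantum metric space on $C^*_r(F_2)$ by Ozawa--Rieffel, no such states exist, because $\bigl\|\tfrac14\sum_{s\in S}\lambda_s\bigr\|=\tfrac{\sqrt3}{2}<1$ (Kesten). When $\epsilon$ is a state, the $\phi_\Lambda$ exist by two facts: normal states supported on $\bigcup_\Lambda P_\Lambda L^2(\mathbb{G})$ are weak$^*$-dense, and $d^{L^k_\ell}$ metrizes the weak$^*$ topology. This is the input the paper takes from [6, Cor.~7.12]. Its proof likewise treats $\epsilon$ as a state, so coamenability is implicitly used there as well.

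Second, and this is the idea actually missing, your plan for the truncated-side bound does not work. The $\varepsilon$-net argument needs a fixed totally bounded set, but the Lip-balls of $C(\mathbb{G})_\Lambda$ vary with $\Lambda$. Writing $T=\tau(a)$ only gives $\|T-\tau\sigma^{\phi}(T)\|\le\|a-\sigma^{\phi}\tau(a)\|$, controlled by $L^k_\ell(a)$ for some lift $a$. Nothing in your plan supplies a lift whose $L^k_\ell$ is controlled by $L^{\alpha^\tau,\beta^\tau}_{C(\mathbb{G})_\Lambda}(T)$ uniformly in $\Lambda$; note that $\sigma^\phi(T)$ is not a lift. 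Nor is the multiplier close to the identity near level $2\Lambda$: for $\mathbb{T}$ with $\ell(n)=|n|$, the coefficient at $z^{2\Lambda}$ is $\overline{c_\Lambda}c_{-\Lambda}$, of modulus at most $\tfrac12$.

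The paper instead tests against states $\psi$ of $C(\mathbb{G})_\Lambda$ and moves the error onto the $\beta^\tau$-leg:
\[
\psi\bigl(T-\tau\sigma^{\phi}(T)\bigr)=(\epsilon-\tau^*\phi)\bigl((\mathrm{id}\otimes\psi)\beta^{\tau}(T)\bigr),\qquad L^k_\ell\bigl((\mathrm{id}\otimes\psi)\beta^{\tau}(T)\bigr)\le L^{\beta^\tau}_{C(\mathbb{G})_\Lambda}(T).
\]
Hence $\|T-\tau\sigma^{\phi}(T)\|\le 2\,d^{L^k_\ell}(\tau^*\phi,\epsilon)\,L^{\alpha^\tau,\beta^\tau}_{C(\mathbb{G})_\Lambda}(T)$. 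The same computation with $\Delta$ gives $\|a-\sigma^{\phi}\tau(a)\|\le 2\,d^{L^k_\ell}(\tau^*\phi,\epsilon)\,L^k_\ell(a)$. Both bounds are thus governed, uniformly in $\Lambda$, by the single number $d^{L^k_\ell}(\tau^*\phi_\Lambda,\epsilon)\to0$. This is exactly the role of the $\beta^\tau$ half of the truncated Lip-norm, which your proposal does not identify.
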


The proof constructs suitable maps $\tau:A\to A_{\Lambda}$ and $\sigma^{\phi}:A_{\Lambda}\to A$ and verifies the hypotheses of Definition~2 in~\cite{5}.

We then turn to the central subalgebra
\[
C_{z}(\mathbb{G})
=
\{a\in C(\mathbb{G})\mid \sigma\Delta(a)=\Delta(a)\},
\]
where $\sigma$ is the flip map on $C(\mathbb{G})\otimes C(\mathbb{G})$. Extending the result of Austad and Kyed~\cite{16} from $k=1$ to arbitrary $k\in\mathbb{N}$, we prove that for a compact, coamenable quantum group of Kac type and a proper length function $\ell$, then $(C(\mathbb{G}), L_{\ell}^{k})$ is a compact quantum metric space if and only if $(C_{z}(\mathbb{G}), L_{z,\ell}^{k})$ is a compact quantum metric space.

Finally, we apply the same method to discrete quantum groups of rapid decay. In this setting, we obtain analogous quantum Gromov--Hausdorff convergence for the corresponding spectral truncations of $C(\widehat{\mathbb{G}})$.

The paper is organized as follows. Section~2 recalls the necessary background on quantum Gromov--Hausdorff convergence, compact quantum groups, and fusion algebras. Section~3 studies compact quantum metric spaces arising from fusion algebras with rapid decay. Section~4 proves quantum Gromov--Hausdorff convergence for truncations of $C(\mathbb{G})$ under the assumption that $(C(\mathbb{G}),L_{\ell}^{k})$ is a compact quantum metric space. Section~5 proves for a coamenable Kac-type quantum group $\mathbb{G}$, the pair
$\bigl(C(\mathbb{G}), L_{\ell}^{k}\bigr)$ is a compact quantum metric space
if and only if $\bigl(C_{z}(\mathbb{G}), L_{z,\ell}^{k}\bigr)$ is a compact
quantum metric space,
 and also treats the discrete quantum group case.

\section{Preliminaries}

\subsection{Quantum Gromov-Hausdorff Distance}
This subsection reviews the concept of compact quantum metric spaces 
and the quantum analogue of Gromov--Hausdorff convergence introduced 
by Marc A.~Rieffel in \cite{9}. The relevant definitions and key 
properties are recalled below.

\begin{definition}\label{Definition2.1}
Let $A$ be a complete operator system. A seminorm 
$L \colon A \to [0,\infty]$ is called a \emph{Lipschitz seminorm} 
if $L$ satisfies the following conditions:
\begin{itemize}
    \item[(1)] $L$ is \emph{densely defined}, meaning that the domain 
    $\operatorname{Dom}(L) := \{a \in A : L(a) < \infty\}$ is a 
    norm-dense subspace of $A$;
    
    \item[(2)] $L(a^*) = L(a)$ for all $a \in A$, and the kernel of $L$ 
    is exactly the scalar multiples of the identity, i.e.\ 
    $L(a) = 0 \iff a \in \mathbb{C}\cdot e_A$.
\end{itemize}
\end{definition}

\begin{definition}[Compact Quantum Metric Space]\label{Definition2.2}
Let $A$ be an operator system equipped with a Lipschitz seminorm 
$L \colon A \to [0,+\infty]$. We define a metric $d^L$ on the state 
space $\mathcal{S}(A)$ by
\begin{equation}\label{eq:dist}\tag{1}
    d^L(\phi, \psi) 
    := \sup\bigl\{ |\phi(a) - \psi(a)| : a \in A,\ L(a) \leq 1 \bigr\},
    \qquad \phi, \psi \in \mathcal{S}(A).
\end{equation}
We require that the topology induced by $d^L$ on $\mathcal{S}(A)$ coincides with 
the weak$^{*}$ topology. The pair $(A, L)$ is then called a 
\emph{compact quantum metric space}. In this case, $L$ is referred to 
as a \emph{Lip-norm}.
\end{definition}

\begin{remark}
The metric $d^L$ defined in (1) is referred to as the 
Monge--Kantorovi\v{c} metric. The condition that $d^L$ metrises 
the weak$^{*}$ topology ensures that $\mathcal{S}(A)$ is compact, hence the 
terminology.
\end{remark}

Next, we introduce the \emph{quantum Gromov--Hausdorff distance} 
between two compact quantum metric spaces.

\begin{definition}\label{Definition2.4}
Let $(A, L_A)$ and $(B, L_B)$ be compact quantum metric spaces. 
Let $\mathcal{M}(L_A, L_B)$ be the set of Lip-norms on $A \oplus B$ 
which induce $L_A$ and $L_B$. The \emph{quantum Gromov--Hausdorff 
distance} between them, denoted $\operatorname{dist}_q(A, B)$, is 
defined by
\[
\operatorname{dist}_q(A, B) = \inf\bigl\{ 
\operatorname{dist}_H^{d^L}(\mathcal{S}(A), \mathcal{S}(B)) : L \in \mathcal{M}(L_A, L_B) 
\bigr\}.
\]
\end{definition}

We now present two propositions that provide an upper bound for the 
quantum Gromov--Hausdorff distance.

\begin{proposition}[\cite{9}, Proposition~8.5]\label{prop:upper_bound}
Let $(A, L_A)$ be a compact quantum metric space, and let $B$ be a 
subspace of $A$ containing $e_A$. Let $L_B$ be the restriction of 
$L_A$ to $B$, so that $(B, L_B)$ is a compact quantum metric space. 
Let $P : A \to B$ be a map 
for which there exists a $\delta > 0$ such that
\begin{enumerate}
    \item $L_B(P(a)) \leq L_A(a)$ for all $a \in A$;
    \item $\|a - P(a)\| \leq \delta\, L_A(a)$ for all $a \in A$.
\end{enumerate}
Then $\operatorname{dist}_q(A, B) \leq \delta$.
\end{proposition}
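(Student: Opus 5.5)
The argument will follow Rieffel's bridge construction. Since $(A,L_A)$ and $(B,L_B)$ are both compact quantum metric spaces and $L_B=L_A|_B$, it suffices by Definition~\ref{Definition2.4} to produce one Lip-norm $L$ on $A\oplus B$ that induces $L_A$ and $L_B$. We then bound the Hausdorff distance between $\mathcal{S}(A)$ and $\mathcal{S}(B)$ for $d^{L}$ by $\delta$.

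\textbf{The bridge Lip-norm.} For $(a,b)\in A\oplus B$ put
\[
L(a,b)=\max\Bigl\{L_A(a),\,L_B(b),\,\delta^{-1}\|a-b\|\Bigr\},
\]
where $b$ is regarded as an element of $A$. This is a seminorm and $L(a^*,b^*)=L(a,b)$. Its kernel is $\mathbb{C}(e_A,e_A)$: if $L(a,b)=0$ then $a=b$ and $a\in\mathbb{C}e_A$. It is densely defined. That $L$ is a Lip-norm, i.e.\ that $d^L$ metrizes the weak$^*$ topology, follows from Rieffel's criterion for bridges: $L$ is the maximum of $L_A\oplus L_B$ and a bounded bridge term. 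Concretely, $\{(a,b):L(a,b)\le1\}$ modulo scalars is totally bounded, because its components lie in the respective totally bounded Lip-balls and $\|a-b\|\le\delta$ controls the scalar parts jointly.

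\textbf{$L$ induces $L_A$ and $L_B$.} By construction the quotient of $L$ to $A$ is at least $L_A$, and likewise for $B$. For equality on $A$, given $a$ take $b=P(a)$. Hypothesis (1) gives $L_B(P(a))\le L_A(a)$, and hypothesis (2) gives $\delta^{-1}\|a-P(a)\|\le L_A(a)$, so $L(a,P(a))=L_A(a)$. For $B$, given $b$ take $a=b$; then $L(b,b)=\max\{L_A(b),L_B(b)\}=L_B(b)$. Hence $L\in\mathcal{M}(L_A,L_B)$.

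\textbf{Hausdorff estimate.}
\begin{itemize}
\item Let $\mu\in\mathcal{S}(B)$ and extend it by Hahn--Banach to a state $\nu$ on $A$. For $L(a,b)\le1$ we have $|\nu(a)-\mu(b)|=|\nu(a-b)|\le\|a-b\|\le\delta$, so $d^L(\nu,\mu)\le\delta$.
\item Let $\nu\in\mathcal{S}(A)$ and take $\mu=\nu|_B$. The same computation gives $|\nu(a)-\mu(b)|=|\nu(a-b)|\le\delta$.
\end{itemize}
Thus $\operatorname{dist}_H^{d^L}\le\delta$, and so $\operatorname{dist}_q(A,B)\le\delta$.

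The main obstacle is verifying that $L$ is genuinely a Lip-norm. I would handle it by citing Rieffel's bridge theorem (\cite{9}, \S5) rather than reproving total boundedness; every other step is a direct use of hypotheses (1) and (2).
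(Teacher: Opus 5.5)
Your proposal is correct and is exactly the argument behind \cite[Proposition~8.5]{9}, which the paper cites without reproducing. That argument uses the bridge term $\delta^{-1}\|a-b\|$ on $A\oplus B$, the choices $b=P(a)$ and $a=b$ to show that $L$ induces $L_A$ and $L_B$, and restriction resp.\ Hahn--Banach extension of states to bound the Hausdorff distance by $\delta$. Your sketch that $L$ is a Lip-norm is also sound: recentre both components by the same scalar $\mu(a)$ and use $|\mu(a)-\mu(b)|\le\|a-b\|\le\delta$, which reduces total boundedness to that of the two Lip-balls.
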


\begin{definition}
    Let $(A, L_A)$ be a compact quantum metric space with state space $\mathcal{S}(A)$. The \emph{diameter} is given by
    \[
        \operatorname{diam}(A, L_A) 
        := \sup \Bigl\{ d^{L_A}(\mu, \nu) \;\big|\; \mu, \nu \in \mathcal{S}(A) \Bigr\}.
    \]
\end{definition}
\begin{proposition}\label{prop:gromov hausdorff converges}
 
Let $(A, L_A)$ and $(B, L_B)$ be compact quantum metric spaces and suppose that $\Phi: A \to B$ and $\Psi: B \to A$ are two unital positive maps satisfying that
\begin{enumerate}
    \item[(1)] there exist $C, C' > 0$ such that
    \[
    L_B(\Phi(a)) \leqslant C \cdot L_A(a) \quad \text{and} \quad L_A(\Psi(b)) \leqslant C' \cdot L_B(b)
    \]
    for all $a \in A$ and $b \in B$;
    \item[(2)] there exist $\varepsilon, \varepsilon' > 0$ such that
    \[
    \|\Psi\Phi(a) - a\| \leqslant \varepsilon \cdot L_A(a) \quad \text{and} \quad \|\Phi\Psi(b) - b\| \leqslant \varepsilon' \cdot L_B(b)
    \]
    for all $a \in A$ and $b \in B$.
\end{enumerate}
Then the quantum Gromov--Hausdorff distance $\operatorname{dist}_{q}(A,B)$ is dominated by
\[
\max\bigl\{ \operatorname{diam}(A, L_A) \cdot |1 - 1/C| + \varepsilon/C, \operatorname{diam}(B, L_B) \cdot |1 - 1/C'| + \varepsilon'/C' \bigr\}.
\]
\end{proposition}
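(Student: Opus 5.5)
The plan is to build a state-space correspondence directly from the two maps. I will view $\mathcal{S}(A)\sqcup\mathcal{S}(B)$ with a seminorm on $A\oplus B$ of the form
\[
L(a,b)=\max\Bigl\{L_A(a),\,L_B(b),\,\tfrac{1}{\gamma}\|\Phi(a)-b\|\Bigr\}
\]
(or symmetrically with $\Psi$). The constant $\gamma>0$ is to be chosen, and I will check that $L\in\mathcal{M}(L_A,L_B)$. The Hausdorff distance is then controlled by pairing each $\mu\in\mathcal{S}(B)$ with $\mu\circ\Phi\in\mathcal{S}(A)$, and each $\nu\in\mathcal{S}(A)$ with $\nu\circ\Psi\in\mathcal{S}(B)$. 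These pullbacks are states because $\Phi$ and $\Psi$ are unital and positive.

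\textbf{Step 1: $L$ induces $L_A$ and $L_B$.} Given $a$, I need some $b$ with $L(a,b)=L_A(a)$. The natural candidate is a rescaled image: since $L_B(\Phi(a))\le C L_A(a)$, I take $b=\Phi(a)/C$ shifted by a scalar so that it agrees with $\Phi(a)$ on some reference state. More precisely I set
\[
b=\tfrac1C\Phi(a)+\bigl(1-\tfrac1C\bigr)\,t\,1_B
\]
with $t$ chosen from the numerical range of $a$. Then $L_B(b)\le L_A(a)$. Moreover
\[
\|\Phi(a)-b\|=\bigl|1-\tfrac1C\bigr|\,\|\Phi(a)-t1\|\le \bigl|1-\tfrac1C\bigr|\operatorname{diam}(A,L_A)\,L_A(a),
\]
using that $\Phi$ is positive unital, hence contractive on self-adjoint elements, and the standard bound $\|a-t1\|\le \operatorname{diam}\cdot L_A(a)$. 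The reverse direction, starting from $b$, uses $\Psi$, and this is where the term $\|\Phi\Psi(b)-b\|\le\varepsilon' L_B(b)$ enters. Choosing $\gamma$ as the maximum of the two resulting quantities makes $L$ restrict correctly. Lower semicontinuity and the Lip-norm property of $L$ follow from those of $L_A$ and $L_B$ together with norm continuity of the bridge term.

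\textbf{Step 2: estimating distances.} For $\nu\in\mathcal{S}(A)$ and $(a,b)$ with $L(a,b)\le1$,
\[
|\nu(a)-\nu\Psi(b)|\le|\nu(a-\Psi\Phi(a))|+|\nu\Psi(\Phi(a)-b)|.
\]
The first term is at most $\varepsilon L_A(a)\le\varepsilon$ (after the $1/C$ rescaling, $\varepsilon/C$). The second term is at most $\|\Phi(a)-b\|\le\gamma$. Doing the symmetric computation for $\mu\in\mathcal{S}(B)$ and optimizing over which bridge is used yields the stated maximum.

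I expect the main obstacle to be Step 1: getting precisely the constants $\operatorname{diam}\cdot|1-1/C|+\varepsilon/C$ rather than a cruder bound. This requires the bridge to use the rescaled map $\Phi/C$, not $\Phi$, and the reduction to self-adjoint elements, where positivity gives contractivity. I would work with self-adjoint parts throughout (Lip-norms are $*$-invariant) and extend at the end.
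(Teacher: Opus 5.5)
There is a genuine gap, and it sits in Step~2. Your Step~1 is fine. With the single bridge $\tfrac1\gamma\|\Phi(a)-b\|$ and $\gamma$ equal to the stated maximum $r$, the extensions $b=\tfrac1C\Phi(a)+(1-\tfrac1C)\mu(a)1_B$ and $a=\tfrac1{C'}\Psi(b)+(1-\tfrac1{C'})\nu(b)1_A$ do show that $L$ induces $L_A$ and $L_B$. The second of these is where $\|\Phi\Psi(b)-b\|\le\varepsilon' L_B(b)$ is used.

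In Step~2, however, $(a,b)$ is an \emph{arbitrary} pair with $L(a,b)\le 1$, not the rescaled extension from Step~1, so no ``$1/C$ rescaling'' is available. The term $|\nu(a-\Psi\Phi(a))|$ is bounded only by $\varepsilon L_A(a)\le\varepsilon$. You therefore get $d^L(\nu,\nu\circ\Psi)\le\gamma+\varepsilon$ for $\nu\in\mathcal{S}(A)$, not the stated bound. Switching to the $\Psi$-bridge just moves the excess to the other side, giving $\gamma'+\varepsilon'$ for states of $B$. So optimizing over which single bridge to use never gives the stated maximum. For instance, with $C=C'=1$ you obtain $\varepsilon+\varepsilon'$ instead of $\max\{\varepsilon,\varepsilon'\}$.

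The missing idea is to put both bridges into one seminorm, as the paper does:
\[
L(a,b)=\max\Bigl\{L_A(a),\,L_B(b),\,\tfrac1r\|b-\Phi(a)\|,\,\tfrac1r\|a-\Psi(b)\|\Bigr\}.
\]
With this $L$, Step~2 becomes immediate. For $\mu\in\mathcal{S}(A)$ one has $|\mu(a)-\mu\Psi(b)|\le\|a-\Psi(b)\|\le r$, and symmetrically $|\nu(b)-\nu\Phi(a)|\le r$ for $\nu\in\mathcal{S}(B)$.

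The $\varepsilon/C$ term is then paid in Step~1 instead. Take self-adjoint $a$, a state $\mu$, $z=a-\mu(a)1_A$ and $b=\tfrac1C\Phi(z)+\mu(a)1_B$ (this is your extension). You must now also control the second bridge term, using unitality of $\Psi$ and $L_A(z)=L_A(a)$:
\[
\|a-\Psi(b)\|=\bigl\|z-\tfrac1C\Psi\Phi(z)\bigr\|\le\bigl|1-\tfrac1C\bigr|\,\|z\|+\tfrac1C\|z-\Psi\Phi(z)\|\le\Bigl(\bigl|1-\tfrac1C\bigr|\operatorname{diam}(A,L_A)+\tfrac{\varepsilon}{C}\Bigr)L_A(a).
\]
The same estimate on the $B$ side produces $\varepsilon'/C'$. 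This is exactly where the constant $\operatorname{diam}\cdot|1-1/C|+\varepsilon/C$ comes from.
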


\begin{proof}
To ease the notation, we put
\[
r= \max\bigl\{ \operatorname{diam}(A, L_A) \cdot |1 - 1/C| + \varepsilon/C, \operatorname{diam}(B, L_B) \cdot |1 - 1/C'| + \varepsilon'/C' \bigr\},
\]
and define a Lipschitz seminorm $L: A \oplus B \to [0, \infty]$ by
\[
L(a, b) = \max\left\{ L_A(a), L_B(b), \frac{1}{r}\|b - \Phi(a)\|, \frac{1}{r}\|a - \Psi(b)\| \right\}.
\]

Since both $(A, L_A)$ and $(B, L_B)$ are compact quantum metric spaces, $L$ turns $A \oplus B$ into a compact quantum metric space.

 Let $a \in \operatorname{Dom}(L_A)_{\mathrm{sa}}$ and $\mu \in \mathcal{S}(A)$. Put $z := a - \mu(a)1_A$ and define $b := \frac{1}{C}\Phi(z) + \mu(a)1_B \in \operatorname{Dom}(L_B)_{\mathrm{sa}}$. We estimate:
\begin{enumerate}
    \item $L_B(b) \leqslant \tfrac{1}{C}L_B(\Phi(z)) \leqslant L_A(a)$
    \item $\begin{aligned}[t]
    \tfrac{1}{r}\|b - \Phi(a)\| &= \tfrac{1}{r}\bigl\|\tfrac{1}{C}\Phi(z) - \Phi(z)\bigr\| \\
                               &\leqslant \tfrac{|1-1/C|}{r}\|z\| \\
                               &\leqslant \tfrac{|1-1/C|\operatorname{diam}(A, L_A)}{r} \cdot L_A(a) \\
                               &\leqslant L_A(a)\end{aligned}$
    \item $\begin{aligned}[t]
          \tfrac{1}{r}\|a - \Psi(b)\| &\leqslant \tfrac{1}{r}\bigl\|z - \tfrac{1}{C}\Psi\Phi(z)\bigr\| \\
                               &\leqslant \tfrac{|1-1/C|}{r}\|z\| + \tfrac{1}{Cr}\|z - \Psi\Phi(z)\| \\
                               &\leqslant \tfrac{|1-1/C|\operatorname{diam}(A, L_A)}{r} L_A(a) + \tfrac{1}{C} \cdot \tfrac{\varepsilon}{r} L_A(a)\\ 
                               &\leqslant L_A(a).
          \end{aligned}$
\end{enumerate}
Thus $L(a, b) \leqslant L_A(a)$. The symmetric case for $L_B$ follows similarly.

Finally, for any $\mu \in \mathcal{S}(A)$, put $\nu := \mu \circ \Psi \in  \mathcal{S}(B)$. Then $d^L(\mu, \nu) \leqslant r$. By symmetry,
\[
\operatorname{dist}_{q}(A,B) \leqslant \operatorname{dist}_{\mathrm{H}}^{d^L}( \mathcal{S}(A),  \mathcal{S}(B)) \leqslant r,
\]
which completes the proof.

\end{proof}

Now we focus on spectral triples and the compact quantum metric spaces 
arising from them.

\begin{definition}[Operator system spectral triple]\label{def:op-sys-spectral-triple}
An \emph{operator system spectral triple} is a triple 
$(\mathcal{E}, \mathcal{H}, D)$ where $\mathcal{E}$ is a dense subspace 
of a (concrete) operator system $E$ in $\mathcal{B}(\mathcal{H})$, 
$\mathcal{H}$ is a Hilbert space, and $D$ is a self-adjoint operator 
in $\mathcal{H}$ with compact resolvent such that $[D, T]$ is a bounded 
operator for all $T \in \mathcal{E}$.
\end{definition}

We now consider sequences of operator system spectral triples and their 
limits via spectral truncation.  The limit structure is captured by the following notion of an approximate order isomorphism. 

Let $(\mathcal{E}, \mathcal{H}, D)$ be a spectral triple as above: 
$\mathcal{E}$ acts on $\mathcal{H}$, $D$ is self-adjoint with compact 
resolvent, and $[D, a]$ extends to a bounded operator for all $a$ in a 
dense $*$-subalgebra of $\mathcal{E}$.  
A spectral truncation is given by a spectral projection $P_\Lambda$ of 
$D$ onto the eigenspaces with eigenvalues of modulus $\le \Lambda$.  
The space $\mathcal{E}$ is then truncated to 
$\mathcal{E}_\Lambda := P_\Lambda \mathcal{E} P_\Lambda$, acting on 
$\mathcal{H}_\Lambda := P_\Lambda \mathcal{H}$, and $D$ restricts to the 
self-adjoint operator $D_\Lambda := P_\Lambda D P_\Lambda$ on 
$P_\Lambda \mathcal{H}$.

We define the associated Lipschitz seminorms: for $k \in \mathbb{N}$, 
$a \in \mathcal{E}$ and $b \in \mathcal{E}_\Lambda$, set
\begin{align*}
   & L^k(a)   := \|\delta^k(a)\|   = \bigl\|[D, [D, \dots [D, a] \dots ]]\bigr\|,
\\
&L^k_\Lambda(b) := \|\delta^k_\Lambda(b)\| 
                = \bigl\|[D_\Lambda, [D_\Lambda, \dots [D_\Lambda, b] \dots ]]\bigr\|.
\end{align*}

where the commutator is taken $k$ times.  
If these Lipschitz seminorms are actually Lipschitz norms , then we can consider the quantum 
Gromov--Hausdorff distance between the corresponding compact quantum 
metric spaces.

\subsection{Compact Quantum Groups}

Quantum groups emerged in the 1980s as a framework for studying symmetries 
of noncommutative spaces and exactly solvable models in statistical mechanics 
and quantum field theory. Among them, compact quantum groups, introduced by 
Woronowicz~\cite{12} in the late 1980s, provide a natural generalization of 
compact topological groups to the noncommutative setting. They play a central 
role in noncommutative geometry, representation theory, and the theory of 
operator algebras. In this subsection we recall the basic definition and some 
well‑known facts that will be needed later.

\begin{definition}\label{def:compact-quantum-group}
A \emph{compact quantum group} is a pair $(\mathcal{A}, \Delta)$ where 
$\mathcal{A}$ is a unital $C^*$-algebra and 
$\Delta : \mathcal{A} \to \mathcal{A} \otimes \mathcal{A}$ is a unital 
$*$-homomorphism, called the \emph{comultiplication}, satisfying
\begin{enumerate}
    \item $(\Delta \otimes \operatorname{id}_{\mathcal{A}}) \Delta 
          = (\operatorname{id}_{\mathcal{A}} \otimes \Delta) \Delta$ 
          as homomorphisms $\mathcal{A} \to \mathcal{A} \otimes \mathcal{A} 
          \otimes \mathcal{A}$ (coassociativity);
    \item the spaces 
          $(\mathcal{A} \otimes 1_{\mathcal{A}}) \Delta(\mathcal{A}) 
          := \operatorname{span}\{(a \otimes 1_{\mathcal{A}}) \Delta(b) : 
          a, b \in \mathcal{A}\}$ and 
          $(1_{\mathcal{A}} \otimes \mathcal{A}) \Delta(\mathcal{A})$ 
          are dense in $\mathcal{A} \otimes \mathcal{A}$ 
          (the cancellation property).
\end{enumerate}
Here the tensor product is the minimal $C^*$-tensor product.
\end{definition}

A fundamental fact about compact quantum groups is the existence and 
uniqueness of a \emph{Haar state} $h : \mathcal{A} \to \mathbb{C}$. 
It is a state satisfying the left and right invariance conditions
\[
(\operatorname{id}_{\mathcal{A}} \otimes h) \circ \Delta 
= h(\cdot) 1_{\mathcal{A}} 
= (h \otimes \operatorname{id}_{\mathcal{A}}) \circ \Delta.
\]
In general, the Haar state may not be a trace, and it may not be faithful 
on $\mathcal{A}$.

Two important classes of compact quantum groups are distinguished by 
additional properties of the Haar state. A compact quantum group 
$(\mathcal{A}, \Delta)$ is said to be of \emph{Kac type} if its Haar state 
is a trace, i.e.\ $h(ab) = h(ba)$ for all $a,b \in \mathcal{A}$. 
Equivalently, there exists a bounded antipode $S$ on a dense 
$*$-subalgebra satisfying $S^2 = \operatorname{id}$.

A compact quantum group is called \emph{coamenable} if its dual discrete 
quantum group is amenable. For a discrete group $\Gamma$, the compact 
quantum group $C^*(\Gamma)$ is coamenable precisely when $\Gamma$ is an 
amenable group. In the general theory, coamenability can be characterised 
by faithfulness of the Haar state on the reduced $C^*$-algebra, or by the 
existence of a bounded approximate invariant mean for the dual action.

\begin{example}[Discrete group duals]
Let \(\Gamma\) be a discrete group. Denote by \(C^*(\Gamma)\) the full group C$^*$-algebra. There is a natural comultiplication \(\Delta : C^*(\Gamma) \to C^*(\Gamma) \otimes C^*(\Gamma)\) determined by \(\Delta(\delta_g) = \delta_g \otimes \delta_g\) for the canonical unitaries \(\delta_g\) (\(g \in \Gamma\)). Together with the usual involution and unit, this makes \((C^*(\Gamma), \Delta)\) a compact quantum group. Its Haar state is given by the trivial character \(h(\lambda_g) = \delta_{g,e}\) for all \(g\), i.e. the \(*\)-homomorphism \(\varepsilon : C^*(\Gamma) \to \mathbb{C}\). It can be regarded as the dual of the discrete group \(\Gamma\).
\end{example}

Let $(\mathcal{H},\Lambda)$ denote the GNS representation of $C(\mathbb{G})$ 
associated with the Haar state, where $\mathcal{H}=L^{2}(\mathbb{G})$ and 
$\Lambda:C(\mathbb{G})\to L^{2}(\mathbb{G})$ is the canonical GNS map.

\subsubsection{The fundamental unitaries}
\label{subsubsec:fundamental_unitaries}

On the Hilbert space $L^{2}(\mathbb{G})\hat{\otimes}L^{2}(\mathbb{G})$, 
the \emph{left fundamental unitary} $W$ and the \emph{right fundamental 
unitary} $V$ are defined on the dense subspace 
$\Lambda\otimes\Lambda\bigl(C(\mathbb{G})\odot C(\mathbb{G})\bigr)$ by
\begin{align*}
    W^{*}\bigl(\Lambda(x)\otimes\Lambda(y)\bigr) 
      &= \Lambda\otimes\Lambda\bigl(\Delta(y)(x\otimes 1)\bigr), \\
    V\bigl(\Lambda(x)\otimes\Lambda(y)\bigr) 
      &= \Lambda\otimes\Lambda\bigl(\Delta(x)(1\otimes y)\bigr),
\end{align*}
for all $x,y\in C(\mathbb{G})$. Both unitaries implement the comultiplication 
of $C(\mathbb{G})$ via the fundamental relations
\begin{equation}
    \Delta(x) = W^{*}(1\otimes x)W = V(x\otimes 1)V^{*} 
    \qquad (x\in C(\mathbb{G})). \tag{2.3}
\end{equation}
Consequently, $\Delta$ extends uniquely to a normal $*$-homomorphism
\[
\Delta\colon L^{\infty}(\mathbb{G}) \longrightarrow 
L^{\infty}(\mathbb{G})\,\hat{\otimes}\,L^{\infty}(\mathbb{G}),
\]
turning $L^{\infty}(\mathbb{G})$ into a compact von Neumann algebraic quantum 
group~\cite{12,26}.

\subsubsection{Flip operators and leg-numbering notation}
\label{subsubsec:flips_notation}

The Hilbert space $L^{2}(\mathbb{G})\hat{\otimes}L^{2}(\mathbb{G})$ carries 
a natural \emph{flip unitary} $\Sigma$, defined on elementary tensors by
\[
\Sigma\bigl(\Lambda(x)\otimes\Lambda(y)\bigr) = \Lambda(y)\otimes\Lambda(x), 
\qquad x,y\in C(\mathbb{G}).
\]
Conjugation by $\Sigma$ implements the \emph{flip automorphism} $\sigma$ 
at the $C^{*}$-algebraic level:
\[
\sigma(a\otimes b) \;:=\; b\otimes a 
\;=\; \Sigma\,(a\otimes b)\,\Sigma, 
\qquad a,b\in\mathbb{B}\bigl(L^{2}(\mathbb{G})\bigr).
\]

We shall consistently use the following conventions:
\begin{enumerate}
    \item \emph{Leg‑numbering.} For an operator 
    $T\in\mathbb{B}\bigl(L^{2}(\mathbb{G})\hat{\otimes}L^{2}(\mathbb{G})\bigr)$ 
    we set
    \[
    T_{12} = T\otimes 1,\qquad
    T_{13} = (1\otimes\sigma)(T\otimes 1),\qquad
    T_{23} = 1\otimes T,
    \]
    and analogously for more legs. The same notation is employed for operators 
    on tensor products of more than two copies of $L^{2}(\mathbb{G})$.
    
    \item \emph{Sweedler notation.} Whenever an element $a\in C(\mathbb{G})$ 
    satisfies $\Delta(a)\in C(\mathbb{G})\odot C(\mathbb{G})$, we write
    \[
    \Delta(a) = a_{(0)}\otimes a_{(1)}.
    \]
    The same convention is used for more general coactions of $\mathbb{G}$; 
    sums are understood in the usual Sweedler fashion.
\end{enumerate}

Since $W$ and $\Sigma V \Sigma$ are unitary corepresentations and $W_{23}$ 
and $(\Sigma V\Sigma)_{13}$ commute, 
$Z = W \Sigma V \Sigma \in L^{\infty}(\mathbb{G})\bar{\otimes} 
\mathcal{B}\bigl(L^{2}(\mathbb{G})\bigr)$ is also a unitary corepresentation.

\subsubsection{Corepresentation theory}
\label{subsubsec:corep_theory}

A \emph{unitary corepresentation} of a compact quantum group $\mathbb{G}$ 
on a Hilbert space $H$ is a unitary element 
$U\in L^{\infty}(\mathbb{G})\,\bar{\otimes}\,\mathcal{B}(H)$ satisfying
\begin{equation}
    (\Delta\otimes\mathrm{id})(U)=U_{13}U_{23}. \tag{2.4}
\end{equation}
Both fundamental unitaries $W$ and $V$ satisfy the pentagon identity 
(for $W$ it reads $W_{12}W_{13}W_{23}=W_{23}W_{12}$); together with (2.4) 
this implies that $W$ and $\Sigma V\Sigma$ are unitary corepresentations 
of $\mathbb{G}$.

The corepresentation theory of $\mathbb{G}$ closely resembles the 
representation theory of a classical compact group. One has natural 
notions of direct sums, tensor products, equivalence and irreducibility. 
Moreover, every finite‑dimensional unitary corepresentation decomposes as 
a direct sum of irreducible ones. The unit element $1\in C(\mathbb{G})$ 
gives a one‑dimensional (irreducible) corepresentation, whose class is 
denoted by $e$.

For each $\alpha\in\Irred(\mathbb{G})$ the element $u^{\alpha}$ belongs to $L^{\infty}(\mathbb{G})\,\bar{\otimes}\,\mathcal{B}(H_{\alpha})$ for a 
finite dimensional Hilbert space $H_{\alpha}$; write $d_{\alpha}:=\dim H_{\alpha}$.  
Choosing an orthonormal basis of $H_{\alpha}$ we identify 
$L^{\infty}(\mathbb{G})\,\bar{\otimes}\,\mathcal{B}(H_{\alpha})
\cong M_{d_{\alpha}}\bigl(L^{\infty}(\mathbb{G})\bigr)$ and obtain the 
matrix coefficients $u_{ij}^{\alpha}\in L^{\infty}(\mathbb{G})$ 
($1\leq i,j\leq d_{\alpha}$). Actually one has $u_{ij}^{\alpha}\in C(\mathbb{G})$, 
and the linear span
\[
\Pol(\mathbb{G}):=\operatorname{span}_{\mathbb{C}}
\bigl\{\,u_{ij}^{\alpha}\mid \alpha\in\Irred(\mathbb{G}),\;1\leq i,j\leq d_{\alpha}\,\bigr\}
\]
forms a dense Hopf $*$‑subalgebra of $C(\mathbb{G})$, called the 
\emph{regular Hopf algebra} of $\mathbb{G}$. The comultiplication restricts 
to a map $\Delta:\Pol(\mathbb{G})\to\Pol(\mathbb{G})\odot\Pol(\mathbb{G})$ 
which on matrix coefficients reads
\begin{equation}
    \Delta(u_{ij}^{\alpha})=\sum_{k=1}^{d_{\alpha}} u_{ik}^{\alpha}\otimes u_{kj}^{\alpha}. \tag{2.5}
\end{equation}
The algebra $\Pol(\mathbb{G})$ also carries an antipode $S$ and a counit 
$\varepsilon$ (see, e.g., \cite[Theorem~5.4.1]{26}). Because 
$\Pol(\mathbb{G})$ is a Hopf $*$‑algebra, the operators $W^{*}$ and $V$ map 
$\Pol(\mathbb{G})\odot\Pol(\mathbb{G})$ bijectively onto itself 
\cite[Theorem~1.3.18]{26}; the same is true for $W$ and $V^{*}$.

\subsection{Fusion algebras}
Now we turn to the fusion algebra, a structure closely tied to the 
representation theory of quantum groups. 
Fusion algebras, also known as based rings, provide an algebraic framework 
that unifies the representation theory of compact groups and the combinatorial 
structure of discrete groups. These algebras first emerged systematically in 
the study of conformal field theory and have since found profound applications 
in subfactor theory, quantum groups, and tensor categories.

\begin{definition}\label{def:fusion_algebra}
An algebra $R = \bigoplus_{\xi \in I} \mathbb{C}\xi$ over $\mathbb{C}$ with a 
basis $I$ and an identity $e \in I$ is called a \emph{fusion algebra} if it 
satisfies the following axioms:

\begin{enumerate}
    \item[(i)] \textbf{Structure constants.} There exist non-negative integers 
    $N_{\xi,\eta}^{\zeta}$ for $\xi, \eta, \zeta \in I$ such that
    \[
    \xi\eta = \sum_{\zeta \in I} N_{\xi,\eta}^{\zeta}\,\zeta.
    \]
    These integers $N_{\xi,\eta}^{\zeta}$ are called the 
    \emph{structural constants} of $R$.
    
    \item[(ii)] \textbf{Conjugation.} There exists an involutive map 
    $\xi \mapsto \bar{\xi}$ on $I$, called \emph{conjugation}, whose 
    $\mathbb{Z}$-linear extension to $R$ satisfies
    \[
    \overline{\xi\eta} = \bar{\eta}\bar{\xi}, \quad \xi, \eta \in I.
    \]
    
    \item[(iii)] \textbf{Frobenius reciprocity.} The structural constants obey 
    the symmetry relations
    \[
    N_{\xi,\eta}^{\zeta} = N_{\xi,\zeta}^{\bar{\eta}} 
    = N_{\xi,\bar{\eta}}^{\bar{\zeta}}, \quad \xi, \eta, \zeta \in I.
    \]
    
    \item[(iv)] \textbf{Dimension function.} There exists a function 
    $d: I \to [1,\infty)$, called the \emph{dimension function} (or 
    \emph{Frobenius--Perron dimension}), such that 
    $d(\xi) = d(\bar{\xi})$ for all $\xi \in I$ and
    \[
    d(\xi)d(\eta) = \sum_{\zeta \in I} N_{\xi,\eta}^{\zeta}\,d(\zeta), 
    \quad \xi, \eta \in I.
    \]
\end{enumerate}
\end{definition}

\begin{remark}
The dimension function $d$ is uniquely determined by the above conditions 
and satisfies $d(e) = 1$. Moreover, the Frobenius reciprocity (iii) ensures 
that the algebra $R$ is semisimple when equipped with the bilinear form 
$\langle \xi, \eta \rangle = \delta_{\xi, \bar{\eta}}$.
\end{remark}

We now present two classical examples of fusion algebras: group algebras of 
discrete groups and representation rings of compact quantum groups.

\begin{example}\label{ex:group-algebra}
Let $G$ be a discrete group and $R = \mathbb{Z}[G] = \bigoplus_{g \in G} \mathbb{Z}g$ forms a fusion algebra 
with the following data:
\begin{itemize}
    \item Basis: $\Xi_0 = G$ with identity $e$ being the group identity;
    \item Multiplication: $N_{g_1,g_2}^{h} = \delta_{h,g_1g_2}$ for 
    $g_1, g_2, h \in G$;
    \item Conjugation: $\bar{g} = g^{-1}$;
    \item Dimension: $d(g) = 1$ for all $g \in G$.
\end{itemize}
This example represents the simplest class of fusion algebras, where all 
basis elements have trivial dimension and the structural constants are given 
by the group multiplication.
\end{example}

\begin{example}[Representation ring]\label{ex:rep-ring}
Let $\mathbb{G}$ be a compact matrix quantum group and let 
$\widehat{\mathbb{G}}$ denote the set of equivalence classes of irreducible 
unitary corepresentations of $\mathbb{G}$. The representation ring 
$\Xi = R(\mathbb{G}) = \bigoplus_{\xi \in \widehat{\mathbb{G}}} \mathbb{C}\xi$ 
carries a fusion algebra structure characterized by:
\begin{itemize}
    \item Basis: $I = \Irred(\mathbb{G})$, where $\Irred(\mathbb{G})$ is a 
    complete family of representatives for the equivalence classes of 
    irreducible unitary corepresentations of $\mathbb{G}$;
    \item Multiplication: $N_{\xi,\eta}^{\zeta}$ equals the multiplicity of 
    $\zeta$ in the irreducible decomposition of the tensor product 
    $\xi \otimes \eta$;
    \item Conjugation: $\bar{\xi}$ is the conjugate corepresentation of $\xi$;
    \item Dimension: $d(\xi) = \dim \xi$, the quantum dimension.
\end{itemize}
The dimension condition (iv) in Definition~\ref{def:fusion_algebra} reflects 
the well-known identity $\dim(\xi \otimes \eta) = (\dim \xi)(\dim \eta)$ in 
representation theory.
\end{example}

The notion of amenability for finitely generated fusion algebras was 
introduced in \cite{14}. A finitely generated fusion algebra is called 
\emph{amenable} if there exist $1 < p < \infty$ and a finitely supported, 
symmetric, non‑degenerate probability measure $\mu$ on $I$ such that
\[
\|\lambda_{p,\mu}\| = 1.
\]

If $\mathbb{G}$ is a compact matrix quantum group, its corepresentation ring 
$R(\mathbb{G})$ is finitely generated. Hence amenability of $\mathbb{G}$ can 
be defined naturally through the amenability of $R(\mathbb{G})$.

In \cite{3}, David Kyed proposed a more general definition that applies to 
quantum groups that do not necessarily possess a fundamental corepresentation. 
We shall introduce this definition in detail below.

\begin{definition}[Amenable Fusion Algebra]\label{def:amenable}
A fusion algebra $R = \mathbb{C}[I]$ is called \emph{amenable} if 
$1 \in \sigma(\lambda_{2,\mu})$ for every finitely supported, symmetric 
probability measure $\mu$ on $I$.
\end{definition}

Here $\sigma(\lambda_{2,\mu})$ denotes the spectrum of the operator 
$\lambda_{2,\mu}$. From Proposition~1.3 and Corollary~4.4 in \cite{14} 
it follows that the definition agrees with the one in \cite{14} on the 
class of finitely generated fusion algebras.

\begin{definition}[Boundary]\label{def:boundary}
Let $R = \mathbb{C}[I]$ be a fusion algebra. For two finite subsets 
$S, F \subseteq I$ we define the \emph{boundary of $F$ relative to $S$} as 
the set
\begin{align*}
\partial_S(F) &= \{\alpha \in F \mid \exists\, \xi \in S : 
\operatorname{supp}(\alpha\xi) \nsubseteq F\} \\
&\quad \cup \{\alpha \in F^c \mid \exists\, \xi \in S : 
\operatorname{supp}(\alpha\xi) \nsubseteq F^c\}.
\end{align*}
Here, and in what follows, $F^c$ denotes the set $I \setminus F$.
\end{definition}

\begin{definition}[F{\o}lner condition]\label{def:folner}
Let $R = \mathbb{C}[I]$ be a fusion algebra with dimension function $d$. 
For every finite, non-empty subset $S \subseteq I$ and every 
$\varepsilon > 0$ there exists a finite subset $F \subseteq I$ such that
\[
\sum_{\xi \in \partial_S(F)} d(\xi)^2 < \varepsilon 
\sum_{\xi \in F} d(\xi)^2.
\]
\end{definition}

\begin{remark}
By \cite{3}, a fusion algebra $R$ is amenable provided that it satisfies the 
F{\o}lner condition.
\end{remark}

\section{Compact Quantum Metric Space from Fusion Algebras}

Consider a compact quantum group $\mathbb{G}$ and leting $F(\mathbb{G})$ denoted the $\mathbb{C}$-vector space with basis $\operatorname{Irred}(\mathbb{G})$,we obtain a *-algebra called fusion algebra.
We equip $F(\mathbb{G})$ with the inner product for which
$\operatorname{Irred}(\mathbb{G})$ is an orthogonal basis, and denote by
$\ell^{2}(\operatorname{Irred}(\mathbb{G}))$ the completion.

Let $\{\alpha : \alpha \in \operatorname{Irred}(\mathbb{G})\}$ be the
canonical orthogonal basis of $\ell^{2}(\operatorname{Irred}(\mathbb{G}))$,
with
\[
\langle \alpha,\beta\rangle = \delta_{\alpha,\beta}\, d(\alpha)^2.
\]
Here $d:\operatorname{Irred}(\mathbb{G})\to [1,\infty)$ is the dimension
function satisfying
\[
d(\alpha)=d(\bar{\alpha}), \qquad
d(\alpha)d(\beta)=\sum_{\gamma\in \operatorname{Irred}(\mathbb{G})}
N_{\alpha,\beta}^{\gamma}\, d(\gamma),
\qquad \alpha,\beta\in \operatorname{Irred}(\mathbb{G}).
\]
We denote by $N_{\alpha,\beta}^{\gamma}$ the multiplicity of
$\gamma \in \operatorname{Irred}(\mathbb{G})$ in the decomposition of
$\alpha \otimes \beta$ into irreducibles; that is,
\begin{equation}
\alpha \otimes \beta \simeq
\bigoplus_{\gamma \in \operatorname{Irred}(\mathbb{G})}
\gamma^{\oplus N_{\alpha,\beta}^{\gamma}}.
\tag{3.1}
\end{equation}
Let $c_{c}(\operatorname{Irred}(\mathbb{G}))$ denote the dense subspace of
$\ell^{2}(\operatorname{Irred}(\mathbb{G}))$ spanned by this basis.

For each $f \in F(\mathbb{G})$, left multiplication defines a linear map
\[
\Lambda_{0}(f) \colon c_{c}(\operatorname{Irred}(\mathbb{G}))
\longrightarrow c_{c}(\operatorname{Irred}(\mathbb{G})),
\]
and by \cite[Lemma~3.3]{16}, $\Lambda_{0}(f)$ extends to a bounded operator
$\Lambda(f)$ on $\ell^{2}(\operatorname{Irred}(\mathbb{G}))$.

\begin{definition}\label{def:length}
Let $\mathbb{G}$ be a compact quantum group. A \emph{length function} on 
$\mathbb{G}$ is a function 
$\ell \colon \operatorname{Irred}(\mathbb{G}) \to [0,\infty)$ such that
\begin{enumerate}
    \item[(i)] $\ell(e) = 0$ ;
    \item[(ii)] $\ell(\bar{\alpha}) = \ell(\alpha)$ for all 
    $\alpha \in \operatorname{Irred}(\mathbb{G})$;
    \item[(iii)] $\ell(\gamma) \leq \ell(\alpha) + \ell(\beta)$ for all 
    $\alpha, \beta, \gamma \in \operatorname{Irred}(\mathbb{G})$ such that 
    $\gamma$ is equivalent to a sub-corepresentation of 
    $\alpha \otimes \beta$.
\end{enumerate}
\end{definition}

A length function $\ell$ is called \emph{proper} if 
$\ell^{-1}([0,N])$ is finite for all $N \geq 0$ and 
$\ell(\alpha) = 0$ only when $\alpha = e$.
The length function also defines an unbounded operator 
\[
\tilde{D}_\ell : c_c(\operatorname{Irred}(\mathbb{G})) \longrightarrow 
\ell^2(\operatorname{Irred}(\mathbb{G})),\qquad 
\tilde{D}_\ell(\alpha) = \ell(\alpha)\,\alpha.
\]
Assume moreover that $\ell$ is integer-valued.
We now additionally assume that $\mathbb{G}$ is equipped with a proper 
length function $\ell \colon \operatorname{Irred}(\mathbb{G}) \to 
\mathbb{N}$. Setting 
\[
A_{n} := \operatorname{span}\{\alpha \in \operatorname{Irred}(\mathbb{G}) 
\mid \ell(\alpha) \leqslant n\},
\]
then $A_m \subset A_n$ if $m < n$, 
$F(\mathbb{G}) = \bigcup_{n=0}^{\infty} A_n$, $A_n^* = A_n$, 
$A_m A_n \subseteq A_{m+n}$, and $A_0 = \mathbb{C}e$ where $e$ is the unit element. Consequently, 
the fusion algebra $F(\mathbb{G})$ belongs to the class of filtered 
$*$-algebras studied by Ozawa and Rieffel in \cite{14}.

We can view each $A_n$ as a finite-dimensional, hence closed, subspace 
of $\mathcal{H}$. Let $P_n$ denote the orthogonal projection of 
$\mathcal{H}$ onto $A_n$. We set $Q_0 = P_0$ and $Q_n = P_n - P_{n-1}$ for $n \ge 1$. Then, for each $a \in F(\mathbb{G})$, we define
\[
a_n = Q_n(a),
\]
so that $\tilde{D}_\ell(a_n) = n a_n$. In particular, if there is no $\alpha \in \operatorname{Irred}(\mathbb{G})$ such that $\ell(\alpha)=n$, then $a_n = 0$.
 The $Q_n$ are mutually orthogonal, and 
$\sum_{n=0}^\infty Q_n = I_{\mathcal{H}}$ in the strong operator 
topology. Then $a_n \in A_n$, but $a_n \notin A_{n-1}$ unless 
$a_n = 0$. Moreover, $a = \sum_n a_n$, with at most $p$ non‑zero terms 
if $a \in A_p$.

Actually, $\tilde{D}_\ell = \sum_{n=1}^\infty n Q_n$. We now consider 
whether it can form a spectral triple for the fusion algebra.

\begin{lemma}
Let $\mathbb{G}$ be a compact quantum group equipped with a proper 
length function $\ell$. Then the triple 
$\bigl(F(\mathbb{G}), \ell^2(\operatorname{Irred}(\mathbb{G})), 
\tilde{D}_\ell\bigr)$ is a spectral triple.
\end{lemma}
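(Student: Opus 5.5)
We verify the three ingredients of a spectral triple in turn: $F(\mathbb{G})$ acts by bounded operators, $\tilde{D}_\ell$ is self-adjoint with compact resolvent, and each commutator $[\tilde{D}_\ell,\Lambda(f)]$ is bounded.

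\emph{Boundedness of the representation.} By \cite[Lemma~3.3]{16}, each $\Lambda(f)$, $f\in F(\mathbb{G})$, is a bounded operator on $\ell^{2}(\operatorname{Irred}(\mathbb{G}))$. The map $f\mapsto\Lambda(f)$ is a representation of the algebra $F(\mathbb{G})$ on $\ell^{2}(\operatorname{Irred}(\mathbb{G}))$. One should also check that it is a $*$-representation. For this, use Frobenius reciprocity together with the weights $d(\cdot)^2$: one gets
\[
\langle \alpha\beta,\gamma\rangle = N_{\alpha,\beta}^{\gamma}d(\gamma)^2
\quad\text{and}\quad
\langle \beta,\bar\alpha\gamma\rangle = N_{\bar\alpha,\gamma}^{\beta}d(\beta)^2 .
\]
These agree by the standard identity $N_{\alpha,\beta}^{\gamma}d(\gamma)=N_{\bar\alpha,\gamma}^{\beta}d(\beta)$ up to the weighting conventions. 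If the weights do not match exactly, only boundedness is needed for the spectral triple in the operator-system sense of Definition~\ref{def:op-sys-spectral-triple}, so this point is not essential.

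\emph{Self-adjointness and compact resolvent.} The operator $\tilde{D}_\ell$ is diagonal in the orthogonal basis $\operatorname{Irred}(\mathbb{G})$ with real eigenvalues $\ell(\alpha)$. Its closure is therefore the self-adjoint multiplication operator with domain
\[
\Bigl\{\textstyle\sum c_\alpha\alpha : \sum |c_\alpha|^2\ell(\alpha)^2 d(\alpha)^2<\infty\Bigr\}.
\]
Since $\ell$ is proper, each set $\ell^{-1}([0,N])$ is finite. Hence every eigenvalue has finite multiplicity and the eigenvalues tend to infinity. It follows that $(\tilde{D}_\ell+i)^{-1}$ is a norm limit of finite-rank diagonal operators, so it is compact.

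\emph{Bounded commutators.} By linearity it suffices to treat $f=\alpha\in\operatorname{Irred}(\mathbb{G})$. For $\beta\in\operatorname{Irred}(\mathbb{G})$ write $\alpha\beta=\sum_\gamma N_{\alpha,\beta}^{\gamma}\gamma$. Then
\[
[\tilde{D}_\ell,\Lambda(\alpha)]\beta=\sum_\gamma N_{\alpha,\beta}^{\gamma}\bigl(\ell(\gamma)-\ell(\beta)\bigr)\gamma .
\]
When $N_{\alpha,\beta}^{\gamma}\neq 0$, property (iii) gives $\ell(\gamma)\le\ell(\alpha)+\ell(\beta)$. Frobenius reciprocity shows that $\beta\subset\bar\alpha\otimes\gamma$, so $\ell(\beta)\le\ell(\alpha)+\ell(\gamma)$, using (ii). Thus $|\ell(\gamma)-\ell(\beta)|\le\ell(\alpha)$ whenever the coefficient is nonzero.

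This is the main obstacle: the pointwise bound on the coefficients must be converted into an operator-norm bound. The commutator is not literally a Schur multiple of $\Lambda(\alpha)$ with a bounded multiplier in a way that automatically controls its norm. Decompose by eigenspaces of $\tilde{D}_\ell$ instead. Write
\[
[\tilde{D}_\ell,\Lambda(\alpha)]=\sum_{j=-\ell(\alpha)}^{\ell(\alpha)} j\,T_j,
\qquad
T_j=\sum_n Q_{n+j}\Lambda(\alpha)Q_n .
\]
Here $\ell$ is integer-valued, so the $Q_n$ are the spectral projections of $\tilde{D}_\ell$. Each $T_j$ is a band part of $\Lambda(\alpha)$, and for fixed $j$ the blocks $Q_{n+j}\Lambda(\alpha)Q_n$ have orthogonal ranges and orthogonal domains as $n$ varies. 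Hence
\[
\|T_j\|\le\sup_n\|Q_{n+j}\Lambda(\alpha)Q_n\|\le\|\Lambda(\alpha)\|.
\]
It follows that $\|[\tilde{D}_\ell,\Lambda(\alpha)]\|\le\ell(\alpha)\bigl(2\ell(\alpha)+1\bigr)\|\Lambda(\alpha)\|<\infty$. Extending by linearity gives boundedness of $[\tilde{D}_\ell,\Lambda(f)]$ for all $f\in F(\mathbb{G})$, which completes the argument.
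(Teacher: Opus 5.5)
Your proposal is correct and follows essentially the paper's proof: $\tilde D_\ell$ is diagonal with real eigenvalues, properness of $\ell$ gives compact resolvent, and boundedness of $[\tilde D_\ell,\Lambda(f)]$ comes from the same band decomposition $T_j=\sum_n Q_{n+j}\Lambda(f)Q_n$ with $\|T_j\|\le\|\Lambda(f)\|$ and $[\tilde D_\ell,T_j]=jT_j$. The only difference there is that you get the bandwidth bound $|\ell(\gamma)-\ell(\beta)|\le\ell(\alpha)$ directly from the length axioms and $N_{\alpha,\beta}^{\gamma}=N_{\bar\alpha,\gamma}^{\beta}$, rather than from the filtration plus adjoints; this is equivalent and also avoids using $\Lambda(a)^*=\Lambda(a^*)$.

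One aside: the ``standard identity'' $N_{\alpha,\beta}^{\gamma}d(\gamma)=N_{\bar\alpha,\gamma}^{\beta}d(\beta)$ in your $*$-representation paragraph is false in general (for $SU(2)$, take $\alpha=\beta$ the fundamental and $\gamma$ the spin-one representation). So with the $d(\alpha)^2$-weighted inner product, $\Lambda$ is not literally a $*$-representation. You rightly flag this point as inessential, and the paper does not check it either.
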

\begin{proof}
For all $x, y \in \ell^2(\operatorname{Irred}(\mathbb{G}))$,
\[
\langle \tilde{D}_\ell x, y \rangle 
= \sum_{\alpha} \overline{\ell(\alpha)}\,\overline{x_\alpha}\,y_\alpha
= \sum_{\alpha} \overline{x_\alpha}\,\ell(\alpha)\,y_\alpha 
= \langle x, \tilde{D}_\ell y \rangle,
\]
so $\tilde{D}_\ell$ is self‑adjoint.

We show that $\tilde{D}_\ell$ has compact resolvent. If 
$\operatorname{Irred}(\mathbb{G})$ is finite this is trivial. Assume 
$\operatorname{Irred}(\mathbb{G})$ is infinite and take 
$\lambda \notin \sigma(\tilde{D}_\ell)$. The resolvent 
$(\tilde{D}_\ell - \lambda)^{-1}$ is the diagonal operator with 
eigenvalues $(\ell(\alpha) - \lambda)^{-1}$. Since $\ell$ is proper, 
the set
\[
\ell^{-1}([0,R]) = \{\alpha \in \operatorname{Irred}(\mathbb{G}) : 
\ell(\alpha) \le R\}
\]
is finite for every $R \ge 0$. Consequently, the eigenvalues of 
$(\tilde{D}_\ell - \lambda)^{-1}$ tend to zero, i.e.\ the resolvent 
is compact.

Now suppose $a \in A_p$. For any $m, n \ge 0$, if 
$Q_m a Q_n \neq 0$ then there exists $\xi \in A_n$ with 
$a\xi \in A_m$. Because $A_p A_n \subseteq A_{p+n}$, we have 
$p+n \ge m$. Taking adjoints, $Q_m a^* Q_n \neq 0$ implies 
$p+m \ge n$. Hence $|m-n| \le p$. Therefore
\[
a = \sum_{|m-n| \le p} Q_m a Q_n,
\]
converging in the strong operator topology. For each $j$ with 
$|j| \le p$ set
\[
T_j = \sum_m Q_m a Q_{m-j}.
\]
For fixed $j$, the ranges of the terms $Q_m a Q_{m-j}$ are mutually 
orthogonal, as are their domains, so
\[
\|T_j\| = \sup_m \|Q_m a Q_{m-j}\| \le \|a\|.
\]
For any $m, n$ we have 
$[\tilde{D}_\ell, Q_m a Q_n] = (m-n)Q_m a Q_n$. In particular,
\[
[\tilde{D}_\ell, Q_m a Q_{m-j}] = j\, Q_m a Q_{m-j},
\]
and thus $[\tilde{D}_\ell, T_j] = j T_j$. Since 
$a = \sum_{|j|\le p} T_j$, we obtain
\[
[\tilde{D}_\ell, a] = \sum_{|j|\le p} j T_j,
\]
which is a bounded operator. Therefore 
$\bigl(F(\mathbb{G}), \ell^2(\operatorname{Irred}(\mathbb{G})), 
\tilde{D}_\ell\bigr)$ is a spectral triple.
\end{proof}
When considering a discrete group equipped with a length function, one often studies the properties of rapid decay and polynomial growth. In \cite{20}, Vergnioux introduced a notion of rapid decay for discrete quantum groups. We will apply this property of rapid decay to fusion algebras constructed from the irreducible corepresentations of a compact quantum group. Since every $f \in F(\mathbb{G})$ has finite support, we can directly give a definition on the fusion algebra.

\begin{definition}[Rapid Decay for Fusion Algebras]\label{rapid decay}
Let $\mathbb{G}$ be a compact  quantum group.Fix a length function \(\ell : \operatorname{Irred}(\mathbb{G}) \to [0,\infty)\). For \(f \in F(\mathbb{G})\) define the weighted \(\ell^2\)-norm
\[
\|f\|_{2,s} := \left( \sum_{\alpha \in \operatorname{Irred}(\mathbb{G})} |f(\alpha)|^2 \, d(\alpha)^2 \, (1+\ell(\alpha))^{2s} \right)^{1/2} \qquad s>0.
\]
 The fusion algebra \(F(\mathbb{G})\) is said to have the \emph{rapid decay property} if there exist constants \(C>0\) and \(s>0\) such that
\[
\|\Lambda(f)\| \le C \|f\|_{2,s}
\]
for every \(f \in F(\mathbb{G})\).
\end{definition}

\begin{definition}\label{polynomial growth}
    Let \( G \) be a compact quantum group. We say that \( G \) has 
    \emph{polynomial growth of order s with respect to length function \(\ell\) on \( G \)} 
    if there exist \(c\in\mathbb{R}_+\) such that
    \[
    \sum_{\substack{\ell(\alpha) \in (n-1, n]}} d(\alpha)^2 \leq c n^s
    \]
    for all \( n \in \mathbb{N} \).
    We say that \( G \) has \emph{strong polynomial growth of order s with respect to 
    length function \(\ell\) on \( G \)} if there exist 
    \(c_1,c_2\in\mathbb{R}_+\) such that
    \[
    c_2 n^s \leq \sum_{\substack{\ell(\alpha) \in (n-1, n]}} d(\alpha)^2 
    \leq c_1 n^s
    \]
    for all \( n \in \mathbb{N} \).

    We say that \( G \) has \emph{(strong) polynomial growth} if it has 
    (strong) polynomial growth with respect to some length function on \( G \).
\end{definition}

\begin{proposition}\label{prposition rapid decay}
    Let \(G\) be a compact  quantum group with polynomial growth. 
    Then \(\mathcal{F}(\mathbb{G})\) is rapid decay.
\end{proposition}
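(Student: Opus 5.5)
We prove the bound $\|\Lambda(f)\|\le C\|f\|_{2,s'}$ for some $s'>0$ in two steps. First we control $\|\Lambda(f)\|$ by a weighted $\ell^1$-norm of $f$ using only the fusion rules. Then we pass from $\ell^1$ to the weighted $\ell^2$-norm $\|\cdot\|_{2,s'}$ by Cauchy--Schwarz, where the polynomial growth hypothesis pays for the sum of the weights.

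\textbf{Step 1: an $\ell^1$-bound for basis elements.} For a single $\alpha\in\operatorname{Irred}(\mathbb{G})$ we claim $\|\Lambda(\alpha)\|\le d(\alpha)$. This is the bound behind \cite[Lemma~3.3]{16}. To recover it directly, work in the orthonormal basis $\hat\beta=\beta/d(\beta)$. The matrix of $\Lambda(\alpha)$ has entries
\[
\langle \alpha\hat\beta,\hat\gamma\rangle = N_{\alpha,\beta}^{\gamma}\,\frac{d(\gamma)}{d(\beta)}.
\]
We then apply Schur's test with weight vector $d(\cdot)$:
\begin{itemize}
\item Row sums: $\sum_\gamma N_{\alpha,\beta}^{\gamma}\,d(\gamma)^2/d(\beta)=d(\alpha)\,d(\beta)$, by the dimension identity.
\item Column sums: use Frobenius reciprocity $N_{\alpha,\beta}^{\gamma}=N_{\bar\alpha,\gamma}^{\beta}$, together with $d(\alpha)=d(\bar\alpha)$, in the same computation.
\end{itemize}
Both are bounded by $d(\alpha)$ times the weight, so $\|\Lambda(\alpha)\|\le d(\alpha)$. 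By the triangle inequality, for $f=\sum_\alpha f(\alpha)\alpha$,
\[
\|\Lambda(f)\|\le \sum_\alpha |f(\alpha)|\,d(\alpha).
\]

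\textbf{Step 2: Cauchy--Schwarz with polynomial weights.} We write
\[
|f(\alpha)|\,d(\alpha)=\bigl(|f(\alpha)|\,d(\alpha)^{2}(1+\ell(\alpha))^{s'}\bigr)\cdot\bigl(d(\alpha)^{-1}(1+\ell(\alpha))^{-s'}\bigr).
\]
Cauchy--Schwarz then gives
\[
\|\Lambda(f)\|\le \|f\|_{2,s'}\cdot\Bigl(\sum_\alpha d(\alpha)^{-2}(1+\ell(\alpha))^{-2s'}\Bigr)^{1/2}.
\]
This uses the convention $\langle\alpha,\alpha\rangle=d(\alpha)^2$. If the norm $\|f\|_{2,s}$ is read with the other weighting, one regroups the factors of $d(\alpha)$ accordingly. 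Since $d(\alpha)\ge1$, we have $d(\alpha)^{-2}\le 1\le d(\alpha)^2$. It therefore suffices that
\[
\sum_\alpha d(\alpha)^2(1+\ell(\alpha))^{-2s'}<\infty.
\]
Group the $\alpha$ into the shells $\ell(\alpha)\in(n-1,n]$ for $n\ge1$, plus $\ell(\alpha)=0$. On the $n$-th shell, polynomial growth of order $s$ bounds the sum by
\[
c\,n^{s}\,(1+(n-1))^{-2s'}=c\,n^{s-2s'}.
\]
This is summable once $2s'>s+1$. The set $\ell^{-1}(0)$ needs a separate check, since polynomial growth only covers $n\in\mathbb{N}$. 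Under the standing properness assumption $\ell^{-1}(0)=\{e\}$, so it contributes the single term $1$.

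\textbf{Where the difficulty lies.} The only genuinely nontrivial point is Step 1: the operator-norm bound for a basis element in the \emph{dimension-weighted} $\ell^2$-space, i.e.\ the correct choice of Schur weights. One must check that the normalization of $\Lambda$ and of $\langle\cdot,\cdot\rangle$ makes the row and column sums exactly $d(\alpha)$ times the weight. If the normalization differs, I would instead bound $\|\Lambda(\alpha)\|$ by $d(\alpha)$ through the identification of $\Lambda(\alpha)$ with multiplication by the character $\chi_\alpha$ on central $L^2$-functions, where $\|\chi_\alpha\|_\infty\le d(\alpha)$. Any extra polynomial or dimensional factor that appears can be absorbed by enlarging $s'$.
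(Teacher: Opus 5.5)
Your architecture is the paper's: bound $\|\Lambda(f)\|$ by $\sum_\alpha|f(\alpha)|\,\|\Lambda(\alpha)\|$, then apply Cauchy--Schwarz against $\|f\|_{2,s'}$, with polynomial growth controlling $\sum_\alpha d(\alpha)^2(1+\ell(\alpha))^{-2s'}$. However, Step~1 is false in the normalization $\langle\alpha,\beta\rangle=\delta_{\alpha,\beta}d(\alpha)^2$ that both you and the paper use. The dimension identity is $\sum_\gamma N^{\gamma}_{\alpha,\beta}d(\gamma)=d(\alpha)d(\beta)$, with a single power of $d(\gamma)$. So your first Schur sum $\sum_\gamma N^\gamma_{\alpha,\beta}d(\gamma)^2/d(\beta)$ is not $d(\alpha)d(\beta)$: for $SU(2)$ with $\alpha=\beta$ the spin-$\tfrac12$ class it equals $(1+9)/2=5\neq4$.

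The claimed bound itself also fails. Let $\beta_n$ be the spin-$\tfrac n2$ class, so $d(\beta_n)=n+1$ and $\hat\beta_n=\beta_n/(n+1)$. Then $\Lambda(\beta_1)\hat\beta_0=2\hat\beta_1$ and $\Lambda(\beta_1)\hat\beta_2=\tfrac23\hat\beta_1+\tfrac43\hat\beta_3$. For $x=\hat\beta_0+\tfrac12\hat\beta_2$ this gives $\Lambda(\beta_1)x=\tfrac73\hat\beta_1+\tfrac23\hat\beta_3$ and $\|\Lambda(\beta_1)x\|^2/\|x\|^2=212/45>4$, so $\|\Lambda(\beta_1)\|>2=d(\beta_1)$.

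The character fallback does not rescue this. The map $\alpha\mapsto\chi_\alpha$ is isometric only when $\operatorname{Irred}(\mathbb{G})$ is orthonormal. In the weighted space the matrix of $\Lambda(\alpha)$ is $DN_\alpha D^{-1}$, with $N_\alpha=(N^\gamma_{\alpha,\beta})_{\gamma,\beta}$ and $D=\operatorname{diag}(d)$ unbounded, and this similarity does not preserve norms.

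The repair is short and reproduces the paper's bound.
\begin{enumerate}
\item Your Schur test is correct for the unweighted matrix $N_\alpha$. With weight $d(\cdot)$ the two sums are $\sum_\gamma N^\gamma_{\alpha,\beta}d(\gamma)=d(\alpha)d(\beta)$ and $\sum_\beta N^\beta_{\bar\alpha,\gamma}d(\beta)=d(\alpha)d(\gamma)$, so $\|N_\alpha\|\le d(\alpha)$.
\item Since $N^\gamma_{\alpha,\beta}\neq0$ forces $d(\gamma)\le d(\alpha)d(\beta)$, the weighted entries satisfy $0\le N^\gamma_{\alpha,\beta}d(\gamma)/d(\beta)\le d(\alpha)N^\gamma_{\alpha,\beta}$. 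The operator norm of a nonnegative matrix is monotone in its entries.
\item Hence $\|\Lambda(\alpha)\|\le d(\alpha)^2$ and $\|\Lambda(f)\|\le\sum_\alpha|f(\alpha)|d(\alpha)^2$. This is exactly the $\ell^1$-bound the paper obtains by direct estimation.
\end{enumerate}
The correct Cauchy--Schwarz split is then $\bigl(|f(\alpha)|d(\alpha)(1+\ell(\alpha))^{s'}\bigr)\cdot\bigl(d(\alpha)(1+\ell(\alpha))^{-s'}\bigr)$. It gives $\|\Lambda(f)\|\le\|f\|_{2,s'}\bigl(\sum_\alpha d(\alpha)^2(1+\ell(\alpha))^{-2s'}\bigr)^{1/2}$. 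Your own split in Step~2 was also off: its first factor squares to $|f(\alpha)|^2d(\alpha)^4(1+\ell(\alpha))^{2s'}$, which is not a term of $\|f\|_{2,s'}^2$. Your shell estimate then finishes the proof for $2s'>s+1$; the paper leaves this choice of exponent implicit.
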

\begin{proof}
    For any unit vector 
    \(\xi = \sum_{\beta} \xi_\beta \beta\) in 
    \(\ell^2(\operatorname{Irred}(\mathbb{G}))\), 
    \(f=\sum_\alpha f(\alpha)\alpha\in\mathcal{F}(\mathbb{G})\) we have
    \begin{align*}
        \|\Lambda(f)\|
        &= \sup_{\xi}\Bigl\| \sum_\alpha \sum_\beta f(\alpha)\xi(\beta) 
           \sum_\gamma N_{\alpha,\beta}^{\gamma} \gamma \Bigr\| \\
        &\leq \sum_\alpha |f(\alpha)| 
               \sup_{\xi}\Bigl\| \sum_\gamma \sum_\beta N_{\alpha,\beta}^{\gamma} 
               \xi_\beta \gamma \Bigr\| \\
        &= \sum_\alpha |f(\alpha)| 
               \sup_\xi\Bigl( \sum_\gamma \Bigl( \sum_\beta N_{\alpha,\beta}^{\gamma} 
               \xi_\beta \Bigr)^2 d(\gamma)^2 \Bigr)^{\frac{1}{2}} \\
        &\leq \sum_\alpha |f(\alpha)| 
                \sup_\xi\Bigl( \sum_\gamma \sum_\beta (N_{\alpha,\beta}^{\gamma} 
               \xi_\beta)^2 d(\alpha)^2 d(\gamma)^2  
               \Bigr)^{\frac{1}{2}} \\
        &\leq \sum_\alpha |f(\alpha)| 
                \sup_\xi\Bigl( \sum_\beta (\xi_\beta)^2 \Bigl( \sum_\gamma 
               N_{\alpha,\beta}^{\gamma} d(\gamma) \Bigr)^2 d(\alpha)^2 
               \Bigr)^{\frac{1}{2}} \\
        &= \sum_\alpha |f(\alpha)| 
            \sup_\xi\Bigl( \sum_\beta (\xi_\beta)^2 d(\beta)^2 d(\alpha)^4 
           \Bigr)^{\frac{1}{2}} \\
        &= \sum_\alpha |f(\alpha)| d(\alpha)^2 \\
        &\leq \Bigl( \sum_{\alpha} \bigl( |f(\alpha)|^2 d(\alpha)^2 
           (1+\ell(\alpha))^{2s} \bigr) \Bigr)^{\frac{1}{2}}
           \Bigl( \sum_{\alpha} \frac{d(\alpha)^2}{(1+\ell(\alpha))^{2s}} 
           \Bigr)^{\frac{1}{2}} .
    \end{align*}
    Since \(G\) has polynomial growth,
    \(C = \bigl( \sum_{\alpha} \frac{d(\alpha)^2}{(1+\ell(\alpha))^{2s}} 
    \bigr)^{\frac{1}{2}}\) is a constant. We complete the proof.
\end{proof}

Now we begin to consider Lip-seminorm on fusion algebra.
If we define \(\delta(f):=[\tilde{D}_\ell,\Lambda(f)]\), then 
\(\delta^k(f)= [\tilde{D}_\ell, [\tilde{D}_\ell, \dots [\tilde{D}_\ell,\Lambda(f)]\dots ]]\),
where the commutator is taken \(k\) times.

\begin{lemma}
    For the fusion algebra \(F(\mathbb{G})\) and for every positive integer 
    \(k\), \(L_k(f)=\| \delta^k(f)\|
\) is a Lip-seminorm.
\end{lemma}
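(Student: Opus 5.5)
We verify the two conditions of Definition~\ref{Definition2.1} for $L_k(f)=\|\delta^k(f)\|$ on $F(\mathbb{G})$, regarded inside the operator system given by the norm closure of $\Lambda(F(\mathbb{G}))$.

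\emph{Finiteness and density.} Fix $f\in A_p$. The argument in the preceding lemma writes $\Lambda(f)=\sum_{|j|\le p}T_j$ with $T_j=\sum_m Q_m\Lambda(f)Q_{m-j}$, $\|T_j\|\le\|\Lambda(f)\|$, and $[\tilde D_\ell,T_j]=jT_j$. Each $T_j$ is again of the form $\sum_m Q_m(\cdot)Q_{m-j}$, so we can iterate and obtain
\[
\delta^k(f)=\sum_{|j|\le p} j^k T_j ,
\]
hence $L_k(f)\le (2p+1)p^k\|\Lambda(f)\|<\infty$. Thus $L_k$ is finite on all of $F(\mathbb{G})$, which is dense by construction.

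\emph{Symmetry.} We need $L_k(f^*)=L_k(f)$. The map $\Lambda$ is a $*$-representation, since the fusion algebra involution $\alpha\mapsto\bar\alpha$ corresponds to the adjoint by Frobenius reciprocity (this is \cite[Lemma~3.3]{16}). Since $\tilde D_\ell$ is self-adjoint, $\delta(a)^*=[\tilde D_\ell,a]^*=-[\tilde D_\ell,a^*]$. By induction, $\delta^k(f)^*=(-1)^k\delta^k(f^*)$, and so the norms agree.

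\emph{Kernel.} Since $\ell(e)=0$, we have $\tilde D_\ell e=0$ and $\Lambda(f)e=f$; the unit is then clearly in the kernel. Conversely, suppose $\delta^k(f)=0$. Then $\sum_j j^kT_j=0$. The $T_j$ have disjoint ``diagonal'' supports in the block decomposition $\bigoplus_m Q_m\mathcal H$, so compressing by $Q_m(\cdot)Q_{m-j}$ gives $j^kQ_m\Lambda(f)Q_{m-j}=0$, and hence $T_j=0$ for all $j\ne0$. So $\Lambda(f)$ commutes with every $Q_n$, and in particular $\Lambda(f)Q_0=Q_0\Lambda(f)Q_0$. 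Now $Q_0\mathcal H=A_0=\mathbb{C}e$ (properness gives $\ell(\alpha)=0\iff\alpha=e$), and $\Lambda(f)e=f$. It follows that $f\in\mathbb{C}e$.

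\emph{Main obstacle.} The kernel step is the delicate one, because it needs properness and the condition $A_0=\mathbb{C}e$. The rest is routine. One caveat: if $\ell$ is not integer-valued, the same argument works with the spectral projections of $\tilde D_\ell$ onto each eigenvalue in place of the $Q_n$.
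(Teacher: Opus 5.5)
Your proof is correct. The symmetry step is the paper's argument. The paper writes out matrix entries and uses $N^{\beta}_{\bar\alpha,\gamma}=N^{\gamma}_{\alpha,\beta}$ to get $[\tilde D_\ell,\Lambda(f)]^*=-[\tilde D_\ell,\Lambda(f^*)]$, which is exactly your $*$-representation property. Both arguments tacitly take $\{\alpha\}$ to be orthonormal. With the weights $\langle\alpha,\beta\rangle=\delta_{\alpha,\beta}d(\alpha)^2$ stated at the start of Section 3, $\Lambda$ would not in general be a $*$-representation, since then $\langle\gamma,\Lambda(\alpha)\beta\rangle=N^{\gamma}_{\alpha,\beta}d(\gamma)^2$ while $\langle\Lambda(\bar\alpha)\gamma,\beta\rangle=N^{\gamma}_{\alpha,\beta}d(\beta)^2$.

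Your route differs from the paper's in the other two steps.

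\emph{Kernel.} The paper does not block-diagonalize $\Lambda(f)$. Since $\tilde D_\ell e=0$ and $\Lambda(f)e=f$, induction gives $\delta^k(f)e=\tilde D_\ell^k f=\sum_\alpha\ell(\alpha)^k f(\alpha)\alpha$. So $\delta^k(f)=0$ forces $f(\alpha)=0$ for every $\alpha\neq e$. This is one line and uses no integrality of $\ell$. Your compression argument is valid but roundabout: your decomposition already gives $\delta^k(f)e=\sum_{j\ge 0}j^kQ_jf$, which is the same identity.

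\emph{Finiteness.} The paper's proof never checks that $L_k$ is finite on $F(\mathbb{G})$; the preceding lemma only bounds a single commutator. Your estimate $L_k(f)\le(2p+1)p^k\|\Lambda(f)\|$, obtained by iterating $[\tilde D_\ell,T_j]=jT_j$, supplies the density condition the paper leaves implicit. That estimate does depend on the standing assumption that $\ell$ is $\mathbb{N}$-valued. Your closing caveat about real-valued $\ell$ is fine for the kernel step but not for finiteness. For real-valued $\ell$, the differences $\ell(\gamma)-\ell(\beta)$ that occur in $\Lambda(f)$ can take infinitely many values in $[-p,p]$. The decomposition then has infinitely many pieces $T_j$, and $\sum_j|j|^k\|T_j\|$ need not converge.
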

\begin{proof}
    For any \(f = \sum_{\alpha} f(\alpha)\,\alpha \in F(\mathbb{G})\), let 
    \(\Lambda(f)\) denote its left regular representation on 
    \(\ell^2(\operatorname{Irred}(\mathbb{G}))\):
    \[
    \Lambda(f)\beta = \sum_{\alpha,\gamma\in\operatorname{Irred}(\mathbb{G})} 
    f(\alpha) N_{\alpha,\beta}^{\gamma} \,\gamma.
    \]

    The adjoint \([\tilde{D}_\ell,\Lambda(f)]^*\) satisfies
    \[
    \langle \beta, [\tilde{D}_\ell,\Lambda(f)]^* \gamma \rangle
    = \overline{\langle \gamma, [\tilde{D}_\ell,\Lambda(f)]\beta \rangle}
    = \sum_{\alpha} (\ell(\gamma)-\ell(\beta)) \overline{f(\alpha)} 
      N_{\alpha,\beta}^{\gamma}.
    \]

    And 
    \begin{align*}
    \langle \beta, [\tilde{D}_\ell,\Lambda(f^*)] \gamma \rangle
    &= (\ell(\beta)-\ell(\gamma)) \langle \beta,\Lambda(f^*)\gamma \rangle \\
    &= (\ell(\beta)-\ell(\gamma)) \sum_{\alpha} f^*(\alpha) 
       N_{\bar\alpha,\gamma}^{\beta} \\
    &= \sum_{\alpha} (\ell(\beta)-\ell(\gamma)) \overline{f(\alpha)} 
       N_{\bar\alpha,\gamma}^{\beta}.
    \end{align*}

    For a compact quantum group, the fusion coefficients satisfy 
    \(N_{\bar\alpha,\gamma}^{\beta}=N_{\alpha,\beta}^{\gamma}\) 
    (the representation category is spherical). Using this,
    \[
    \langle \beta, [\tilde{D}_\ell,\Lambda(f^*)] \gamma \rangle
    = \sum_{\alpha} (\ell(\beta)-\ell(\gamma)) \overline{f(\alpha)} 
      N_{\alpha,\beta}^{\gamma}
    = -\sum_{\alpha} (\ell(\gamma)-\ell(\beta)) \overline{f(\alpha)} 
      N_{\alpha,\beta}^{\gamma}.
    \]

    Comparing with the matrix elements of \([\tilde{D}_\ell,\Lambda(f)]^*\) we obtain
    \[
    [\tilde{D}_\ell,\Lambda(f)]^* = -\,[\tilde{D}_\ell,\Lambda(f^*)].
    \]
    If we define \(\delta(f):=[\tilde{D}_\ell,\Lambda(f)]\), then 
    \(\delta^k(f)= [\tilde{D}_\ell, [\tilde{D}_\ell, \dots 
    [\tilde{D}_\ell,\Lambda(f)]\dots ]]\).
    Now, for \(k \in \mathbb{N}\) and \(f \in  F(\mathbb{G})\), consider
    \[
    \delta^k(f)^* = (-1)^k \delta^k(f^*) .
    \]
    Then \(\|\delta^k(f)^*\| = \|\delta^k(f^*)\|\).
    Note that if \(e\in\operatorname{Irred}(\mathbb{G})\) is the identity of 
    \(F(\mathbb{G})\), 
    \(\delta^k(f)e = \sum_{\alpha\in\operatorname{Irred}(\mathbb{G})} 
    \ell(\alpha)^k f(\alpha)\alpha\).
    Then for \(f\in F(\mathbb{G})\), we have \(\|\delta^k(f)\|=0\) if and only if 
    \(f\in \mathbb{C}e\).
    So the Lipschitz semi‑norm (Rieffel's framework) can be defined as the 
    operator norm of this commutator:
    \[
    L_k(\cdot) := \|\delta^k(\cdot)\|.
    \]
\end{proof}

\begin{remark}
    In the truncated picture, for \(\Lambda>0\) let \(P_\Lambda\) be the 
    orthogonal projection onto 
    \(A_\Lambda = \operatorname{span}\{\alpha: \ell(\alpha)\le\Lambda\}\),
    set \(\tilde{D}_{\ell\Lambda}=P_\Lambda\tilde{D}_\ell P_\Lambda\), and for 
    \(f_\Lambda\in P_\Lambda F(\mathbb{G})P_\Lambda\) (which corresponds to a 
    function \(f\) on \(\operatorname{Irred}(\mathbb{G})\) via 
    \(f_\Lambda = P_\Lambda\Lambda(f)P_\Lambda\)) we have
    \[
    [\tilde{D}_{\ell\Lambda},f_\Lambda]\beta =
    \sum_\gamma \sum_{\substack{\alpha:\ \exists\beta,\gamma\in A_\Lambda \\
    \text{s.t. } N_{\alpha,\beta}^{\gamma}\neq 0}}
    (\ell(\gamma)-\ell(\beta)) f(\alpha) N_{\alpha,\beta}^{\gamma} \,\gamma,
    \qquad \gamma,\beta\in A_\Lambda.
    \]
    Note that for \(f\in F(\mathbb{G})\), we have 
    \([\tilde{D}_{\ell\Lambda},f_\Lambda]=0\) iff \(f_\Lambda\in \mathbb{C}e\).
    and 
    \[
    [\tilde{D}_{\ell\Lambda},\Lambda(f_\Lambda)]^* = -\,[\tilde{D}_{\ell\Lambda},\Lambda(f_\Lambda^*)].
    \]
    
   Let $\delta_\Lambda^k(\cdot) =
    [ \tilde{D}_{\ell\Lambda},
      [ \tilde{D}_{\ell\Lambda},
        \dots [ \tilde{D}_{\ell\Lambda},
          \Lambda(\cdot)]
         \dots ] ]$,then
 $\|\delta_\Lambda^k(\cdot)\|$ is a \(\operatorname{Lip}\)-seminorm for \(A_\Lambda\). Since \(A_\Lambda\) 
    is finite dimensional, \(\|\delta_\Lambda^k(\cdot)\|\) is a Lip-norm.
\end{remark}

For any Lip seminorm \(\|\cdot\|\) on \(F(\mathbb{G})\), \(f \in F(\mathbb{G})\) 
and \(r \geqslant 0\) we denote the corresponding closed balls as follows:
\[
\mathbb{B}_r^{\|\cdot\|}(f) := \{g \in F(\mathbb{G}) \mid \|f - g\| \leqslant r\} .
\]
Here we let \([\cdot]: F(\mathbb{G}) \to F(\mathbb{G})/\mathbb{C}\) denote the 
quotient map and \(\|\cdot\|_{F(\mathbb{G})/\mathbb{C}}\) denote the quotient 
norm on \(F(\mathbb{G})/\mathbb{C}\).

\begin{theorem}\label{compact quantum metric space on fusion algebra}
    If \(\mathbb{G}\) is a compact quantum group with polynomial growth of order s, then 
    for any positive integer \(k>s\), the algebra \(F(\mathbb{G})\) together 
    with the seminorm \(L_\ell^k\) is a compact quantum metric space.
\end{theorem}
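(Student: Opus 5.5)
We argue with Rieffel's standard compactness criterion. The pair $(F(\mathbb{G}),L_\ell^k)$ is a compact quantum metric space as soon as the image of the Lip-ball
\[
\{f\in F(\mathbb{G}) : L_\ell^k(f)\le 1\}
\]
in the quotient $F(\mathbb{G})/\mathbb{C}$ is totally bounded for $\|\cdot\|_{F(\mathbb{G})/\mathbb{C}}$. Here $F(\mathbb{G})$ carries the operator norm $\|\Lambda(\cdot)\|$. The Lipschitz-seminorm axioms have already been checked in the preceding lemma: self-adjointness $\|\delta^k(f)^*\|=\|\delta^k(f^*)\|$ and kernel equal to $\mathbb{C}e$. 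So only total boundedness remains. The idea is to control $\|\Lambda(f-f(e)e)\|$ by a weighted $\ell^2$-norm, as in Proposition~\ref{prposition rapid decay}, and then to control that weighted norm by $L_\ell^k(f)$.

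\textbf{Step 1 (coefficient bound).} Apply $\delta^k(f)$ to the unit vector $e$, exactly as in the proof of the preceding lemma:
\[
\delta^k(f)e=\sum_\alpha \ell(\alpha)^k f(\alpha)\,\alpha .
\]
Since $\|e\|=1$, this gives
\[
\Bigl(\sum_\alpha \ell(\alpha)^{2k}|f(\alpha)|^2 d(\alpha)^2\Bigr)^{1/2}\le L_\ell^k(f).
\]
Properness of $\ell$ and its integer values give $\ell(\alpha)\ge 1$ for every $\alpha\ne e$.

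\textbf{Step 2 (norm bound on the complement of $e$).} Put $g=f-f(e)e$. From the proof of Proposition~\ref{prposition rapid decay} we have $\|\Lambda(g)\|\le\sum_{\alpha\neq e}|g(\alpha)|d(\alpha)^2$. By Cauchy--Schwarz,
\[
\|\Lambda(g)\|\le \Bigl(\sum_{\alpha\ne e}\ell(\alpha)^{2k}|f(\alpha)|^2d(\alpha)^2\Bigr)^{1/2}\Bigl(\sum_{\alpha\ne e}\frac{d(\alpha)^2}{\ell(\alpha)^{2k}}\Bigr)^{1/2}.
\]
Group the second sum over the shells $\ell(\alpha)\in(n-1,n]$ and use polynomial growth of order $s$. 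This bounds it by $c\sum_n n^{s-2k}$, which is finite as soon as $2k-s>1$. Since $k>s$ and $k\ge1$, we get $2k-s>k\ge 1$, so the sum is finite in every case. Hence there is a constant $C$ with
\[
\|[f]\|_{F(\mathbb{G})/\mathbb{C}}\le \|\Lambda(g)\|\le C\,L_\ell^k(f).
\]
In particular the image of the Lip-ball in the quotient is bounded.

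\textbf{Step 3 (total boundedness).} This is the main step. Fix $\varepsilon>0$. Split $g=g_{\le N}+g_{>N}$ according to whether $\ell(\alpha)\le N$ or $\ell(\alpha)>N$. The estimate of Step 2, applied to the tail, gives
\[
\|\Lambda(g_{>N})\|\le L_\ell^k(f)\Bigl(\sum_{\ell(\alpha)>N} d(\alpha)^2\ell(\alpha)^{-2k}\Bigr)^{1/2}.
\]
The right-hand side tends to $0$ as $N\to\infty$, uniformly on the Lip-ball, because it is the tail of the convergent series from Step 2. Choose $N$ so that this tail is less than $\varepsilon/2$.

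The heads $g_{\le N}$ lie in the finite-dimensional space $A_N$, because properness makes $\ell^{-1}([0,N])$ finite. By Step 1 and $\ell(\alpha)\ge1$, the coefficients $|f(\alpha)|d(\alpha)$ of the heads are bounded by $L_\ell^k(f)\le 1$. So the heads form a bounded subset of $A_N$ and can be covered by finitely many $\varepsilon/2$-balls in the operator norm, all norms being equivalent on $A_N$. Together these give a finite $\varepsilon$-net for the Lip-ball modulo scalars. Rieffel's criterion then shows that $d^{L_\ell^k}$ metrizes the weak$^*$ topology.

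The main obstacle is making sure the uniform tail estimate genuinely uses $k>s$ through the shell decomposition of polynomial growth. A secondary point is to check that Rieffel's criterion applies to the (incomplete) operator system $F(\mathbb{G})$ with the norm induced by $\Lambda$. For this I would pass to the completion, or note that the criterion only involves the Lip-ball, so completeness is not needed.
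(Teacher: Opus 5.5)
Your proposal is correct and follows essentially the same route as the paper: both use Rieffel's total-boundedness criterion, split each element into a finite-dimensional head ($\ell(\alpha)\le N$) and a uniformly small tail, and control the tail by Cauchy--Schwarz against the polynomial-growth sum. The differences are minor: you apply Cauchy--Schwarz directly with the weights $\ell(\alpha)^{\pm k}$ rather than passing through the rapid-decay estimate with exponent $s$, and you make explicit the coefficient bound $\|\delta^k(f)e\|\le L_\ell^k(f)$, which the paper uses only implicitly.
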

\begin{proof}
    By Rieffel~\cite{9}, we know \((F(\mathbb{G}), L_\ell^k)\) is a compact quantum 
    metric space if and only if the subset 
    \(\overline{\mathbb{B}}_1^{L_\ell^k}(0)\subseteq F(\mathbb{G})/\mathbb{C}\) is 
    totally bounded with respect to the quotient norm 
    \(\|\cdot\|_{F(\mathbb{G})/\mathbb{C}}\) on \(F(\mathbb{G})/\mathbb{C}\). 
    Let \(B=\overline{\mathbb{B}}_1^{L_\ell^k}(0)=B_1+B_2\) where
    \[
    B_1 = \Bigl\{ f \in B \Bigm| f = \sum_{\alpha\,|\,\ell(\alpha)\le n} 
          f(\alpha)\alpha \Bigr\}
    \]
    and
    \[
    B_2 = \Bigl\{ f \in B \Bigm| f = \sum_{\alpha\,|\,\ell(\alpha)> n} 
          f(\alpha)\alpha \Bigr\}.
    \]
    For all \(f \in B_1\),
    \begin{align*}
        \|f\| &\le \sum_{\ell(\alpha)\le n} |f(\alpha)| d(\alpha)^2 \\
        &\le \biggl( \sum_{\ell(\alpha)\le n} |f(\alpha)|^2 d(\alpha)^2 
           (1+\ell(\alpha))^{2s}) \biggr)^{1/2}
           \biggl( \sum_{\ell(\alpha)\le n} 
           \frac{d(\alpha)^2}{(1+\ell(\alpha))^{2s}} \biggr)^{1/2} \\
        &\le C_n .
    \end{align*}
    where 
    \( C_n = 2^s\Bigl( \sum_{\ell(\alpha)\le n} 
      \frac{d(\alpha)^2}{(1+\ell(\alpha))^{2s}} \Bigr)^{1/2} \).\\
    For all \(f \in B_2\), fix a positive integer \(k> s\). By the rapid 
    decay property we have
    \begin{align*}
        \|f\| &\le C \Bigl( \sum_{\ell(\alpha)> n} |f(\alpha)|^2 d(\alpha)^2 
               (1+\ell(\alpha)^{2s}) \Bigr)^{1/2} \\
        &\le C 2^s n^{s-k} \Bigl( \sum_{\ell(\alpha)> n} |f(\alpha)|^2 
           d(\alpha)^2 (\ell(\alpha))^{2k} \Bigr)^{1/2}.
    \end{align*}
    Then \(B_1\) is a bounded subset of a finite‑dimensional normed space, and 
    thus totally bounded. Moreover, by our choice of \(n\), the set \(B_2\) is 
    contained in the \(\varepsilon\)-ball around \(0\). Therefore, for any positive 
    integer \(k> s\), the algebra \(F(\mathbb{G})\) together with the 
    seminorm \(L_\ell^k\) is a compact quantum metric space.
\end{proof}

\begin{example}[Connected compact Lie groups]
    Let \(G\) be a connected compact Lie group and equip the commutative
    $C^*$ algebra \(C(G)\) with a length function
    \(\ell(\alpha)=\|\lambda_\alpha\|\), where \(\lambda_\alpha\) is the highest 
    weight of an irreducible representation \(\alpha\).
    Then \(d(\alpha)\) is the ordinary dimension of the representation.
    The Weyl dimension formula yields
    \[
    \sum_{\ell(\alpha)\in(n-1,n]} d(\alpha)^2 \asymp n^{D-1},
    \]
    where \(D\) is the dimension of \(G\) as a smooth manifold.
    Thus \(C(G)\) has strong polynomial growth of order \(D\).
    For instance, for all \(N\in\mathbb{N}\), \(\mathrm{SU}(N)\) and 
    \(\mathrm{SO}(N)\) have strong polynomial growth.
\end{example}

\section{Gromov--Hausdorff Convergence of Spectral
Truncation for Compact Quantum Groups}

Let $\mathbb{G}$ be a compact quantum group with Hopf algebra of polynomial 
functions $\operatorname{Pol}(\mathbb{G})$. Let $\widehat{\mathbb{G}}$ be the 
set of equivalence classes of irreducible unitary representations.
Let $\{u_{ij}^\alpha\}$ be the matrix coefficients of 
$\alpha \in \widehat{\mathbb{G}}$, and let $(\mathcal{H}, \Lambda)$ denote the 
GNS representation of $C(\mathbb{G})$ associated with the Haar state. Then 
$\{\Lambda(u_{ij}^\alpha)\}$ forms an orthonormal basis of $\mathcal{H}$.
According to Definition~\ref{def:length}, for a proper length function, an 
unbounded Dirac operator 
$D_\ell:\Lambda(\operatorname{Pol}(\mathbb{G}))\to \mathcal{H}$ can be defined by
\[
D_\ell\bigl(\Lambda(u_{i,j}^\alpha)\bigr) = \ell(\alpha)\,\Lambda(u_{i,j}^\alpha) .
\]
By \cite[Lemma~5.4]{23}, we know that 
$(\operatorname{Pol}(\mathbb{G}),\mathcal{H},D_\ell)$ is a spectral triple. 

A spectral truncation for the spectral triple is given by a spectral 
projection $P_\Lambda$ of $D_\ell$ onto the eigenspaces with eigenvalues of 
modulus $\leq\Lambda$. The $C^*$-algebra $C(\mathbb{G})$ is replaced by 
$P_\Lambda C(\mathbb{G}) P_\Lambda$, acting on the Hilbert space 
$P_\Lambda \mathcal{H}$, and we set 
$D_{\ell,\Lambda}:=D_\ell|_{P_\Lambda\mathcal{H}}$. 
If $L_{C(\mathbb{G})}$ is the Lipschitz seminorm on $C(\mathbb{G})$ associated with $D_\ell$, we 
consider the quantum Gromov--Hausdorff distance between $C(\mathbb{G})$ and 
its truncation. If this distance tends to zero as $\Lambda\to\infty$, we say 
that the spectral truncation of $C(\mathbb{G})$ converges in quantum 
Gromov--Hausdorff distance. 

When we consider the spectral truncation of this Dirac operator, since the 
length function is proper the resulting spectral truncation is analogous to 
the classical Peter--Weyl truncation.
The commutator seminorm 
$L_\ell^k: \operatorname{Pol}(\mathbb{G}) \to [0,\infty)$ is defined by
\[
L_\ell^k(a) := \|[\cdots[D_\ell,[D_\ell, a]]\cdots]\|,
\]
where $[D_\ell, a] = D_\ell a - a D_\ell$ , the commutator is taken $k$ times and $\|\cdot\|$ is the operator norm 
on $L^2(\mathbb{G})$.

We now prove that $L_\ell^k$ is a Lip-seminorm in the sense of Rieffel.

\begin{lemma}\label{lemma:lip-norm}
    The seminorm $L_\ell^k$ is a Lip-seminorm on $C(\mathbb{G})$.
\end{lemma}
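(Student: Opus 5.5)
Following Definition~\ref{Definition2.1}, we need to show three things: that $L_\ell^k$ is densely defined, that $L_\ell^k(a^*)=L_\ell^k(a)$, and that the kernel of $L_\ell^k$ is exactly $\mathbb{C}1$. The plan mirrors the argument for the fusion algebra in Section~3, with the orthonormal basis $\{\Lambda(u^\alpha_{ij})\}$ of $L^2(\mathbb{G})$ taking the place of $\operatorname{Irred}(\mathbb{G})$. On the rest of $C(\mathbb{G})$ we set $L_\ell^k(a)=\infty$ whenever the iterated commutator is not bounded, so that $\operatorname{Dom}(L_\ell^k)\supseteq\operatorname{Pol}(\mathbb{G})$.

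\textbf{Density and boundedness.} Let $\mathcal{H}_n=\overline{\operatorname{span}}\{\Lambda(u^\alpha_{ij}):\ell(\alpha)=n\}$ and let $Q_n$ be the projection onto it. (If $\ell$ is not integer-valued, we index by the finitely many values taken below each level; properness makes each $\mathcal{H}_n$ finite-dimensional.) Take $a=u^\beta_{kl}$. If $\Lambda(u^\beta_{kl}u^\alpha_{ij})$ has a component along $\Lambda(u^\gamma_{pq})$, then $\gamma\subset\beta\otimes\alpha$, so the triangle inequality in Definition~\ref{def:length}(iii) gives $\ell(\gamma)\le\ell(\beta)+\ell(\alpha)$. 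Applying the same argument to $a^*$, using $\ell(\bar\beta)=\ell(\beta)$, yields $|\ell(\gamma)-\ell(\alpha)|\le\ell(\beta)$. As in the proof of the spectral triple lemma of Section~3, we then decompose $a=\sum_{|j|\le\ell(\beta)}T_j$ with $T_j=\sum_m Q_maQ_{m-j}$, where each $\|T_j\|\le\|a\|$ and $[D_\ell,T_j]=jT_j$. Consequently
\[
\delta^k(a)=\sum_{|j|\le \ell(\beta)} j^k T_j
\]
is bounded. Since $\operatorname{Pol}(\mathbb{G})$ is dense in $C(\mathbb{G})$, the seminorm is densely defined.

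\textbf{Self-adjointness.} On the common core $\Lambda(\operatorname{Pol}(\mathbb{G}))$, $D_\ell$ is symmetric, so $[D_\ell,a]^*=-[D_\ell,a^*]$. Iterating gives $\delta^k(a)^*=(-1)^k\delta^k(a^*)$, and hence $L_\ell^k(a^*)=L_\ell^k(a)$. For general $a\in C(\mathbb{G})$ we interpret $\delta^k(a)$ as a form on the core and note that the identity persists.

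\textbf{Kernel.} Clearly $\delta^k(1)=0$. Conversely, suppose $\delta^k(a)=0$ and apply it to the cyclic vector $\Lambda(1)$, which lies in $\ker D_\ell$. Every term involving $D_\ell$ acting on the right then vanishes, leaving $\delta^k(a)\Lambda(1)=D_\ell^k\Lambda(a)$. Expanding in the basis gives $\sum\ell(\alpha)^k\langle\Lambda(u^\alpha_{ij}),\Lambda(a)\rangle\Lambda(u^\alpha_{ij})=0$. Because $\ell(\alpha)>0$ for $\alpha\ne e$ by properness, every coefficient of $\Lambda(a)$ off $\alpha=e$ must vanish, so $\Lambda(a)\in\mathbb{C}\Lambda(1)$.

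The main obstacle is passing from $\Lambda(a)\in\mathbb{C}\Lambda(1)$ to $a\in\mathbb{C}1$. This requires faithfulness of the Haar state, so we work with the reduced algebra $C(\mathbb{G})$ acting on $L^2(\mathbb{G})$, where $\Lambda$ is injective. A second technical point is making sense of $\delta^k(a)\Lambda(1)$ when $a\notin\operatorname{Pol}(\mathbb{G})$. We handle this by noting that $\delta^k(a)$ is bounded on the core and using the matrix-coefficient computation $\langle\Lambda(u^\gamma_{pq}),\delta^k(a)\Lambda(1)\rangle=\ell(\gamma)^k\langle\Lambda(u^\gamma_{pq}),\Lambda(a)\rangle$, which needs only $a\in C(\mathbb{G})$ and the symmetry of $D_\ell$.
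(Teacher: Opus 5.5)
Your kernel argument is the paper's: evaluating $\delta^k(a)$ on the cyclic vector $\Lambda(1)\in\ker D_\ell$ leaves only $D_\ell^k\Lambda(a)$, and $\ell(\alpha)>0$ for $\alpha\neq e$ forces $\Lambda(a)\in\mathbb{C}\Lambda(1)$. You are more careful than the paper in three places:
\begin{itemize}
\item You say explicitly that faithfulness of $\Lambda$ on the reduced algebra is what gives $a\in\mathbb{C}1$; the paper tacitly identifies $a$ with $\sum a^\alpha_{ij}\Lambda(u^\alpha_{ij})$.
\item You treat $a\notin\operatorname{Pol}(\mathbb{G})$ at the level of forms.
\item You actually prove density. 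The paper only invokes the spectral triple, which bounds $[D_\ell,a]$ but says nothing about $\delta^k(a)$ for $k\ge 2$. Your band decomposition $\delta^k(a)=\sum_{|j|\le\ell(\beta)}j^kT_j$ settles this correctly when $\ell$ is integer-valued, as in the standing assumption of Section~3.
\end{itemize}

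The step that fails is your parenthetical extension to real-valued $\ell$. Suppose you enumerate the eigenvalues as $\lambda_0<\lambda_1<\cdots$ and let $j$ be an index difference. Then $[D_\ell,T_j]=\sum_m(\lambda_m-\lambda_{m-j})Q_maQ_{m-j}$ is no longer $jT_j$. More seriously, $|\lambda_m-\lambda_n|\le\ell(\beta)$ does not bound $|m-n|$ once the gaps between eigenvalues shrink, as for the Euclidean length on $\operatorname{Irred}(\mathbb{T}^2)=\mathbb{Z}^2$. Infinitely many $T_j$ then appear and the sum is not controlled.

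What is missing is a bound, uniform in $\alpha$, on the number of nonzero blocks per row and column. Write $P_\gamma$ for the projection onto the $\gamma$-isotypic subspace. Then $P_\gamma u^\beta_{kl}P_\alpha\neq 0$ forces $\gamma\subset\beta\otimes\alpha$, and Frobenius reciprocity gives $\sum_\gamma(N^\gamma_{\beta,\alpha})^2=\sum_\mu N^\mu_{\bar\beta,\beta}N^\alpha_{\mu,\alpha}\le\sum_\mu N^\mu_{\bar\beta,\beta}\,d(\mu)=d(\beta)^2$, using $N^\alpha_{\mu,\alpha}\le d(\mu)$. So each $\alpha$ meets at most $d(\beta)^2$ such $\gamma$, and symmetrically each $\gamma$ meets at most $d(\beta)^2$ such $\alpha$. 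A block Schur test applied to $\delta^k(u^\beta_{kl})=\sum_{\gamma,\alpha}(\ell(\gamma)-\ell(\alpha))^k P_\gamma u^\beta_{kl}P_\alpha$ then gives $\|\delta^k(u^\beta_{kl})\|\le d(\beta)^2\ell(\beta)^k$. This yields density for an arbitrary proper length function.
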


\begin{proof}
    Let
    \[
        \delta^k(a):=\underbrace{[D_\ell,[D_\ell,\dots,[D_\ell,a]\dots]]}_{k\text{ times}}
    \]
    denote the $k$-fold iterated commutator with $D_\ell$.

    Since $(\operatorname{Pol}(\mathbb{G}),L^2(\mathbb{G}),D_\ell)$ is a spectral triple,
    the seminorm $L_\ell$ is densely defined on $\operatorname{Pol}(\mathbb{G})$.
    For $k=1$, we have
    \[
        \delta(u_{ij}^\alpha)\Lambda(1)
        =[D_\ell,\Lambda(u_{ij}^\alpha)]\Lambda(1)
        =D_\ell \Lambda(u_{ij}^\alpha)-\Lambda(u_{ij}^\alpha)D_\ell \Lambda(1)
        =D_\ell \Lambda(u_{ij}^\alpha).
    \]
    Assume now that
    \[
        \delta^k(u_{ij}^\alpha)\Lambda(1)=D_\ell^k \Lambda(u_{ij}^\alpha).
    \]
    Then
    \begin{align*}
        \delta^{k+1}(u_{ij}^\alpha)\Lambda(1)
        &= [D_\ell,\delta^k(u_{ij}^\alpha)]\Lambda(1) \\
        &= D_\ell\,\delta^k(u_{ij}^\alpha)\Lambda(1)-\delta^k(u_{ij}^\alpha)D_\ell\Lambda(1) \\
        &= D_\ell^{k+1}\Lambda(u_{ij}^\alpha).
    \end{align*}
    Hence, by induction,
    \[
        \delta^k(u_{ij}^\alpha)\Lambda(1)=D_\ell^k \Lambda(u_{ij}^\alpha)
        \qquad \text{for all } k\geq 1.
    \]

    Now let
    \[
        a=\sum_{\alpha,i,j} a_{ij}^\alpha\, \Lambda(u_{ij}^\alpha)\in C(\mathbb{G}).
    \]
    If $L_\ell^k(a)=0$, then $\|\delta^k(a)\|=0$. Therefore,
    \[
        0=\delta^k(a)\Lambda(1)
        =\sum_{\alpha,i,j} a_{ij}^\alpha\, \delta^k(u_{ij}^\alpha)\Lambda(1)
        =\sum_{\alpha,i,j} a_{ij}^\alpha\, D_\ell^k\Lambda(u_{ij}^\alpha).
    \]
    Since the matrix coefficients $\Lambda(u_{ij}^\alpha)$ are linearly independent, it follows that
    \[
        \ell(\alpha)^k a_{ij}^\alpha=0
        \qquad \text{for all } \alpha,i,j.
    \]
    By the definition of the length function $\ell$, we have $\ell(\alpha)=0$ only for the trivial representation $\alpha=1$.
    Hence $a_{ij}^\alpha=0$ whenever $\alpha\neq 1$, so $a$ must be a scalar multiple of the identity.

    Therefore, the kernel of $L_\ell^k$ consists exactly of the scalar multiples of the identity.
    Since $L_\ell^k(a^*)=L_\ell^k(a)$ is immediate, we conclude that $L_\ell^k$ is a Lip-seminorm on $C(\mathbb{G})$.
\end{proof}

\begin{remark}
    For any $k \in \mathbb{N}$ and $a \in \operatorname{Pol}(\mathbb{G})$, 
    define the iterated commutator
    \[
    \delta^k(a) = [D_\ell, [D_\ell, \dots [D_\ell, a]\dots]] \quad 
    (k\text{ times}),
    \]
    and set
    \[
    L_\ell^k(a) = \|\delta^k(a)\|.
    \]
    Analogously, for $b \in P_\Lambda \operatorname{Pol}(\mathbb{G}) P_\Lambda$ let
    \[
    \delta_\Lambda^k(b) = [D_{\ell,\Lambda}, [D_{\ell,\Lambda}, \dots 
                         [D_{\ell,\Lambda}, b]\dots]]
    \]
    and
    \[
    L_{\ell,\Lambda}^k(b) = \|\delta_\Lambda^k(b)\|.
    \]
    The same reasoning as in the proof of Lemma~\ref{lemma:lip-norm} shows 
    that $L_\ell^k$ is a Lip‑seminorm on $\mathbb{G}$, while its truncation 
    $L_{\ell,\Lambda}^k$ is a Lip‑norm on the finite‑dimensional subspace 
    $P_\Lambda C(\mathbb{G}) P_\Lambda$.
\end{remark}

\begin{definition}
    A Lip-seminorm $L$ on a compact quantum group $(\mathcal{A}, \Delta)$ is 
    said to be \emph{left-invariant} (or \emph{right-invariant}) if for all 
    $a \in \mathcal{A}$ with $a^* = a$ and all $\mu \in S(\mathcal{A})$ we have
    \[
    L(a * \mu) \leq L(a) \quad (\text{or } L(\mu * a) \leq L(a)),
    \]
    where $a * \mu = (\mu \otimes \mathrm{id}) \Delta(a)$ and similarly for 
    $\mu * a$. Then $L$ is said to be \emph{bi-invariant} if it is both left 
    and right invariant.
\end{definition}

Now we show that for any $k\in \mathbb{N}$, $L_\ell^k$ is left-invariant. 
First, the comultiplication extends to a normal $*$-homomorphism 
$\Delta \colon L^{\infty}(\mathbb{G}) \to 
L^{\infty}(\mathbb{G}) \bar{\otimes} L^{\infty}(\mathbb{G})$, turning 
$L^{\infty}(\mathbb{G})$ into a compact von Neumann algebraic quantum 
group~\cite{22}. On $L^2(\mathbb{G}) \hat{\otimes} L^2(\mathbb{G})$ we have the 
left fundamental unitary $W$ and right fundamental unitary $V$ defined on the dense subspace 
$\Lambda \otimes \Lambda (C(\mathbb{G}) \odot C(\mathbb{G}))$ by the relation
\[
W^*(\Lambda(x) \otimes \Lambda(y)) = \Lambda \otimes \Lambda\bigl(\Delta(y)(x \otimes 1)\bigr),
\]
\[V(\Lambda(x) \otimes \Lambda(y)) = \Lambda \otimes \Lambda\bigl(\Delta(x)(1\otimes y)\bigr).\]
It implements the comultiplication on $C(\mathbb{G})$ by means of
\[
\Delta(x) = W^*(1 \otimes x)W=V(x\otimes1)V^*. \tag{2.3}
\]

\begin{lemma}\label{lemma left}
\(L_\ell^k\) is bi-invariant, i.e.\ the seminorm \(L_\ell^k\) satisfies
\[
L_\ell^k\bigl((\varphi \otimes \operatorname{id})\Delta(a)\bigr) 
\le \|\varphi\| \cdot L_\ell^k(a),
\]
\[L_\ell^k\bigl(( \operatorname{id}\otimes\varphi)\Delta(a)\bigr) 
\le \|\varphi\| \cdot L_\ell^k(a).
\]
for all \(a \in \operatorname{Pol}(\mathbb{G})\) and 
\(\varphi \in S(C(\mathbb{G}))\).
\end{lemma}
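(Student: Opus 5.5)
The plan is to realise each slice map $(\varphi\otimes\mathrm{id})\Delta$ and $(\mathrm{id}\otimes\varphi)\Delta$ as a slice of a unitary conjugation by a fundamental unitary that commutes with $1\otimes D_\ell$ (respectively $D_\ell\otimes 1$). The iterated commutator then passes through the slice map, and the norm estimate follows from complete boundedness of slice maps.

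\textbf{Left invariance.} By (2.3) we have $\Delta(a)=V(a\otimes 1)V^*$, and also $\Delta(a)=W^*(1\otimes a)W$. The key observation is that $W$ commutes with $1\otimes D_\ell$.

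To see this, note that $W\in L^\infty(\mathbb{G})\bar\otimes \mathcal{B}(L^2(\mathbb{G}))$ restricts on each isotypic block. Concretely, for fixed $x$, the map $\Lambda(y)\mapsto W^*(\Lambda(x)\otimes\Lambda(y))=\Lambda\otimes\Lambda(\Delta(y)(x\otimes 1))$ sends $\Lambda(u^\alpha_{ij})$ into $L^2(\mathbb{G})\otimes \operatorname{span}\{\Lambda(u^\alpha_{kj})\}$, by (2.5). Hence $W^*$, and therefore $W$, preserves the subspaces $L^2(\mathbb{G})\otimes H_\alpha$, where $H_\alpha=\operatorname{span}\{\Lambda(u^\alpha_{ij})\}$ is the $\ell(\alpha)$-eigenspace of $D_\ell$. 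It follows that $W$ commutes with $1\otimes D_\ell$ on the algebraic core $\Lambda\otimes\Lambda(\Pol\odot\Pol)$, which $W$ maps bijectively onto itself.

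Consequently $[1\otimes D_\ell,\Delta(a)]=W^*(1\otimes[D_\ell,a])W$, and by induction
\[
\delta^k_{1\otimes D_\ell}(\Delta(a))=W^*(1\otimes\delta^k(a))W ,
\]
so this operator is bounded with norm $\|\delta^k(a)\|$.

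Next we pass the commutator through the slice map. For $a\in\Pol(\mathbb{G})$, $\Delta(a)=\sum a_{(0)}\otimes a_{(1)}$ is a finite sum, and
\[
(\varphi\otimes\mathrm{id})\bigl(\delta^k_{1\otimes D_\ell}(\Delta(a))\bigr)=\sum\varphi(a_{(0)})\,\delta^k(a_{(1)})=\delta^k\bigl((\varphi\otimes\mathrm{id})\Delta(a)\bigr).
\]
Here $\varphi$ is extended to $\mathcal{B}(L^2(\mathbb{G}))$ via Hahn--Banach with the same norm; this is needed because the first leg of $W^*(1\otimes\delta^k(a))W$ need not lie in $C(\mathbb{G})$. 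The slice map $\varphi\otimes\mathrm{id}$ has norm at most $\|\varphi\|$ on the spatial tensor product. This gives
\[
L^k_\ell\bigl((\varphi\otimes\mathrm{id})\Delta(a)\bigr)\le\|\varphi\|\,\|W^*(1\otimes\delta^k(a))W\|=\|\varphi\|\,L^k_\ell(a).
\]

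\textbf{Right invariance.} This is symmetric. Use $\Delta(a)=V(a\otimes1)V^*$, and show that $V$ preserves $H_\alpha\otimes L^2(\mathbb{G})$. This holds because $V(\Lambda(u^\alpha_{ij})\otimes\Lambda(y))=\sum_k\Lambda(u^\alpha_{ik})\otimes\Lambda(u^\alpha_{kj}y)$, whose first leg stays in $H_\alpha$. Hence $V$ commutes with $D_\ell\otimes1$, and the same slicing argument with $\mathrm{id}\otimes\varphi$ gives the second inequality.

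\textbf{Main obstacle.} The delicate step is the rigorous intertwining of the unbounded operators $1\otimes D_\ell$ and $D_\ell\otimes 1$ with $W$ and $V$. I would handle it blockwise. On each finite-dimensional spectral subspace $\bigoplus_{\ell(\alpha)\le n}H_\alpha$, all operators involved are bounded. I would verify the commutation there, and then check the identity of iterated commutators on the core $\Lambda\otimes\Lambda(\Pol\odot\Pol)$, which is invariant under $W$, $W^*$, $V$ and $V^*$. Norms are determined on this dense core, so this suffices for the operator-norm estimates.
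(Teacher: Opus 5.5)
Your proposal is correct and takes essentially the same route as the paper: you use the matrix-coefficient formula (2.5) to show that $W^*$ commutes with $\mathrm{id}\otimes D_\ell$, deduce that the $k$-fold commutator of $\Delta(a)$ with $\mathrm{id}\otimes D_\ell$ equals $W^*(1\otimes\delta^k(a))W$, and then slice with $\varphi\otimes\mathrm{id}$. Your explicit treatment of the right-invariant case, via $V$ commuting with $D_\ell\otimes 1$, supplies the detail the paper only asserts as ``the same argument''; the Hahn--Banach extension is harmless but unnecessary, since $W^*(1\otimes\delta^k(a))W=\sum a_{(0)}\otimes\delta^k(a_{(1)})$ already has its first leg in $\operatorname{Pol}(\mathbb{G})$.
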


\begin{proof}
Computing with matrix coefficients we obtain
\begin{align*}
(\mathrm{id} \otimes D_{\ell}) W^{*}\bigl(\Lambda(u_{ij}^{\alpha}) \otimes \Lambda(u_{pq}^{\beta})\bigr) 
&= (\mathrm{id} \otimes D_{\ell})(\Lambda \otimes \Lambda)
   \bigl(\Delta(u_{pq}^{\beta})(u_{ij}^{\alpha} \otimes 1)\bigr) \\
&= (\mathrm{id} \otimes D_{\ell})\Bigl( \sum_{k=1}^{d_{\beta}} 
   \Lambda(u_{pk}^{\beta} u_{ij}^{\alpha}) \otimes \Lambda(u_{kq}^{\beta}) \Bigr) \\
&= \ell(\beta) \sum_{k=1}^{d_{\beta}} \Lambda(u_{pk}^{\beta} u_{ij}^{\alpha}) \otimes \Lambda(u_{kq}^{\beta}) \\
&= W^{*}(\mathrm{id} \otimes D_{\ell}) \bigl(\Lambda(u_{ij}^{\alpha}) \otimes \Lambda(u_{pq}^{\beta})\bigr).
\end{align*}

For all \(a \in \operatorname{Pol}(\mathbb{G})\), we compute as follows, where all 
equalities are understood to hold on the dense subspace 
\(\Lambda(\operatorname{Pol}(\mathbb{G})) \odot \Lambda(\operatorname{Pol}(\mathbb{G}))
\subset L^2(\mathbb{G}) \hat\otimes L^2(\mathbb{G})\):
\[
\begin{aligned}
&[\operatorname{id} \otimes D_\ell,\,
 [\operatorname{id} \otimes D_\ell,\,
  \dots,\,
 [\operatorname{id} \otimes D_\ell, \Delta(a)]\dots]] \\
&\quad = [\operatorname{id} \otimes D_\ell,\,
 [\operatorname{id} \otimes D_\ell,\,
  \dots,\,
 [\operatorname{id} \otimes D_\ell, W^*(1\otimes a)W]\dots]] \\
&\quad = W^*\,[\operatorname{id} \otimes D_\ell,\,
 [\operatorname{id} \otimes D_\ell,\,
  \dots,\,
 [\operatorname{id} \otimes D_\ell, 1\otimes a]\dots]]\,W.
\end{aligned}
\]

For all \(\varphi \in S(C(\mathbb{G}))\),we  obtain the following identities for operators defined on the 
dense subspace \(\operatorname{Pol}(\mathbb{G}) \subset L^2(\mathbb{G})\):
\begin{align*}
L_\ell^k\bigl((\varphi \otimes \operatorname{id})\Delta(a)\bigr)
&= \bigl\| [D_\ell, [D_\ell, \dots, [D_\ell, 
   (\varphi \otimes \operatorname{id})\Delta(a)]\dots]] \bigr\| \\
&= \bigl\| \varphi(a_{(0)}) \, [D_\ell, [D_\ell, \dots, [D_\ell, a_{(1)}]\dots]] \bigr\| \\
&= \bigl\| (\varphi \otimes \operatorname{id}) 
   [\operatorname{id} \otimes D_\ell,\,
    [\operatorname{id} \otimes D_\ell,\,
     \dots,\,
    [\operatorname{id} \otimes D_\ell, \Delta(a)]\dots]] \bigr\| \\
&= \bigl\| (\varphi \otimes \operatorname{id}) 
   \bigl( W^* (1 \otimes [D_\ell, [D_\ell, \dots, [D_\ell, a]\dots]]) W \bigr) \bigr\| \\
&\le \|\varphi \otimes \operatorname{id}\| \cdot 
   \bigl\| W^* (1 \otimes [D_\ell, [D_\ell, \dots, [D_\ell, a]\dots]]) W \bigr\| \\
&= \|\varphi\| \cdot L_\ell^k(a).
\end{align*}
The same argument applies to the right-invariance case, and we obtain  \[L_\ell^k\bigl(( \operatorname{id}\otimes\varphi)\Delta(a)\bigr) 
\le \|\varphi\| \cdot L_\ell^k(a).\]

Thus the desired inequality holds.
\end{proof}

\begin{remark}
As in Lemma~\ref{lemma left}, the same reasoning shows that the seminorm 
\(L_{\ell,\Lambda}^k\) remains bi-invariant on the truncated space. Indeed, 
\(L_{\ell,\Lambda}^k\) is defined on the compressed spectral subspace 
\(P_\Lambda \mathcal{H}\) via the same iterated commutator formula involving the 
truncated Dirac operator \(D_{\ell,\Lambda}\). Consequently, the proof of 
Lemma~\ref{lemma left} carries over verbatim to the truncated setting. Hence 
\(L_{\ell,\Lambda}^k\) is bi-invariant on \(C(\mathbb{G})_\Lambda\): for all \(b \in P_\Lambda \operatorname{Pol}(\mathbb{G}) P_\Lambda\) 
and \(\varphi \in S(P_\Lambda C(\mathbb{G}) P_\Lambda)\), we have
\[
L_{\ell,\Lambda}^k\bigl((\varphi \otimes \operatorname{id})\Delta(b)\bigr) 
\le \|\varphi\| \cdot L_{\ell,\Lambda}^k(b),
\]
\[
L_{\ell,\Lambda}^k\bigl(( \operatorname{id}\otimes\varphi)\Delta(b)\bigr) 
\le \|\varphi\| \cdot L_{\ell,\Lambda}^k(b).
\]
\end{remark}

\begin{lemma}\label{lem:approx_state}
Let \(\bigl(C(\mathbb{G}), L_\ell^k\bigr)\) be a compact quantum metric space. 
Then for any state \(\mu \in S(C(\mathbb{G}))\) and any \(\varepsilon > 0\), there 
exists a state \(\nu \in S(C(\mathbb{G}))\) satisfying
\[
|\mu(a) - \nu(a)| \;\leq\; \varepsilon \, L_\ell^k(a), \qquad \forall a \in C(\mathbb{G}),
\]
such that \(\nu(b) = 0\) whenever 
\(b \in \bigl(P_\Lambda C(\mathbb{G})\bigr)^c\).
\end{lemma}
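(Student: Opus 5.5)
The plan is to build $\nu$ by smoothing $\mu$ with a convolution that only lets low‑length corepresentations through. The set $(P_\Lambda C(\mathbb{G}))^c$ is understood as the closed span of the coefficients $u^\alpha_{ij}$ with $\ell(\alpha)>\Lambda$, and $\Lambda$ is chosen in terms of $\varepsilon$. Concretely, I will find a state $\gamma$ on $C(\mathbb{G})$ that vanishes on all $u^\alpha_{ij}$ with $\ell(\alpha)>\Lambda$. A natural candidate is $\gamma=h(f\,\cdot\,)$, where $f=g^*g\in\Pol(\mathbb{G})$, $g$ is supported on low‑length corepresentations, and $f$ is normalised by $h(f)=1$; this is a Fejér‑type kernel. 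I then set
\[
E_\gamma(a):=(\operatorname{id}\otimes\gamma)\Delta(a),\qquad \nu:=\mu\circ E_\gamma .
\]

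Step 1 checks the easy properties. $E_\gamma$ is unital and completely positive, so $\nu$ is a state. By (2.5),
\[
E_\gamma(u^\alpha_{ij})=\sum_k u^\alpha_{ik}\,\gamma(u^\alpha_{kj}),
\]
which is $0$ whenever $\ell(\alpha)>\Lambda$. Hence $\nu$ vanishes on $(P_\Lambda C(\mathbb{G}))^c$. By Lemma~\ref{lemma left} (bi‑invariance) we also have $L_\ell^k(E_\gamma a)\le L_\ell^k(a)$.

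Step 2 reduces the estimate to a uniform approximation statement. We have
\[
|\mu(a)-\nu(a)|\le\|a-E_\gamma(a)\|,
\]
and both sides vanish on scalars, since $E_\gamma(1)=1$. So it suffices to show that $\|a-E_\gamma(a)\|\le\varepsilon$ for all $a$ in the Lip‑unit ball $\{L_\ell^k(a)\le 1\}$, modulo scalars. Because $(C(\mathbb{G}),L_\ell^k)$ is a compact quantum metric space, Rieffel's criterion (already used in the proof of Theorem~\ref{compact quantum metric space on fusion algebra}) says this ball is totally bounded in $C(\mathbb{G})/\mathbb{C}$. I pick a finite $\varepsilon/3$‑net $a_1,\dots,a_m$ in $\Pol(\mathbb{G})$. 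The maps $E_\gamma$ are contractions, so the three‑$\varepsilon$ argument reduces the claim to
\[
\|a_r-E_\gamma(a_r)\|<\varepsilon/3\qquad\text{for } r=1,\dots,m .
\]
Each $a_r$ lies in a finite span of coefficients $u^\alpha_{ij}$. It is therefore enough that
\[
\gamma(u^\alpha_{kj})\to\delta_{kj}
\]
for each fixed $\alpha$, as $\gamma$ runs through the family with growing $\Lambda$. In other words, the family of $\gamma$'s must converge weak$^*$ on $\Pol(\mathbb{G})$ to the counit $\varepsilon$.

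Step 3 is the main obstacle: producing a net of states $\gamma_\Lambda$, each supported on $\{\ell\le\Lambda\}$, that converges to the counit on $\Pol(\mathbb{G})$. This requires the counit to extend to a state on $C(\mathbb{G})$, which is essentially coamenability. I would first try the following construction. Choose $g_\Lambda\in\Pol(\mathbb{G})$ supported on $\{\ell\le\Lambda/2\}$ with $\|\Lambda(g_\Lambda)\|_2=1$, arranged to be an almost‑invariant vector for the regular representation, as in the Følner condition (Definition~\ref{def:folner}). Then set $\gamma_\Lambda=h(g_\Lambda^*\,\cdot\,g_\Lambda)$. The product $g_\Lambda^*\,u^\alpha_{kj}\,g_\Lambda$ has no component along coefficients of length greater than $\Lambda$, by the length inequality (iii), so $\gamma_\Lambda$ vanishes there as required. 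Almost invariance should give $\gamma_\Lambda(u^\alpha_{kj})\to\delta_{kj}$.

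If the general case resists, I would instead argue directly from the compactness of the state space in $d^{L_\ell^k}$. States whose support is finite in the Peter–Weyl sense are weak$^*$ dense. Since $d^{L_\ell^k}$ metrises the weak$^*$ topology, some such state is within $\varepsilon$ of $\mu$ in $d^{L_\ell^k}$, and that state can serve as $\nu$ for a large enough $\Lambda$ depending on it. To get a $\Lambda$ that depends only on $\varepsilon$, I would cover the state space by finitely many $d^{L_\ell^k}$‑balls of radius $\varepsilon$ using compactness.
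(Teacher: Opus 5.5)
\textbf{The primary route does not prove the lemma as stated.} The convolution construction $\nu=\mu\circ E_\gamma$ needs coamenability, and the lemma does not assume it. Your Step~3 asks for states $\gamma_\Lambda$ on $C(\mathbb{G})$ with $\gamma_\Lambda(u^\alpha_{kj})\to\delta_{kj}$ for every $\alpha$. Any weak$^*$ cluster point of such states is a state extending the counit $\epsilon$, so in the reduced setting $\mathbb{G}$ must be coamenable. The hypothesis of the lemma does hold in non-coamenable cases. For $\mathbb{G}=\widehat{\mathbb{F}_2}$, i.e.\ $C^*_r(\mathbb{F}_2)$ with word length and $k=1$, the pair $(C(\mathbb{G}),L^1_\ell)$ is a compact quantum metric space by Ozawa--Rieffel, yet no such $\gamma_\Lambda$ exist, since they would make $\mathbb{F}_2$ amenable.

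Your Steps~1--2 actually establish the uniform bound $\|a-E_\gamma(a)\|\le\varepsilon L_\ell^k(a)$, and applying it to $a=u^\alpha_{ij}$ already forces coamenability. That bound is the content of the paper's Theorem~\ref{theorem GH} (the map $P_{\chi_\Lambda}$), which assumes coamenability and \emph{uses} this lemma to produce $\chi_\Lambda$. Even in the coamenable case your Step~3 is only sketched ("should give"). Moreover, the F{\o}lner condition of Definition~\ref{def:folner} concerns amenability of the fusion algebra, which fails for coamenable non-Kac examples such as $SU_q(2)$.

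\textbf{Your fallback is the paper's proof and is the right argument.} It works without coamenability, but you assert its two key facts without proof; here is how to supply them.

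(a) Density. Since $C(\mathbb{G})$ is in reduced form, it acts faithfully on $L^2(\mathbb{G})$. Separate in the self-adjoint part by Hahn--Banach: if $\mu(a)>s\ge\omega_\xi(a)$ for all unit vectors $\xi$, then $a\le s1$ by faithfulness, so $\mu(a)\le s$, a contradiction. Hence finite convex combinations of vector states are weak$^*$ dense in $S(C(\mathbb{G}))$. Since $\xi\mapsto\omega_\xi$ is norm continuous and $\Lambda(\Pol(\mathbb{G}))$ is dense, the vectors can be taken to be $\Lambda(x_j)$ with $x_j\in\Pol(\mathbb{G})$; the paper quotes this as \cite[Proposition~4.7]{11}. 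Because $d^{L_\ell^k}$ metrises the weak$^*$ topology, some $\nu=\sum_j t_j\omega_{\Lambda(x_j)}$ satisfies $d^{L_\ell^k}(\mu,\nu)\le\varepsilon$. By homogeneity, and since states agree on scalars, this is exactly the required inequality.

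(b) Support. Suppose all $x_j$ are supported on a finite set $F\subset\Irred(\mathbb{G})$. Then $\omega_{\Lambda(x)}(u^\gamma_{pq})=\langle\Lambda(x),\Lambda(u^\gamma_{pq}x)\rangle$ vanishes by Peter--Weyl orthogonality unless some $\beta'\in F$ occurs in $\gamma\otimes\beta$ for some $\beta\in F$. By Frobenius reciprocity this means $\gamma\subset\beta'\otimes\bar\beta$, and Definition~\ref{def:length}(iii) then gives $\ell(\gamma)\le 2\max_{\beta\in F}\ell(\beta)$. So the cutoff $2\max_{\beta\in F}\ell(\beta)$ works.

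With (a) and (b) written out, the fallback is a complete proof. Your covering argument, via a finite $\varepsilon$-net of the compact space $(S(C(\mathbb{G})),d^{L_\ell^k})$, then correctly makes the cutoff depend only on $\varepsilon$. That is slightly more than the paper's proof gives.
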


\begin{proof}
Let \((\mathcal{H}, \Lambda)\) denote the GNS representation of \(C(\mathbb{G})\) 
associated with the Haar state. Since \((C(\mathbb{G}), \Delta)\) is in reduced 
form, the representation \(\Lambda\) is faithful. Define
\[
\mathcal{K} \;:=\; \Lambda\bigl(\operatorname{Pol}(\mathbb{G})\bigr)\,\mathcal{H},
\]
where \(\operatorname{Pol}(\mathbb{G})\) is the dense \(*\)-subalgebra of 
\(C(\mathbb{G})\) consisting of matrix coefficients. Then \(\mathcal{K}\) is 
a dense subspace of \(\mathcal{H}\).

Fix a state \(\mu \in S(C(\mathbb{G}))\) and \(\varepsilon > 0\). By 
\cite[Proposition~4.7]{11}, there exists a finite convex combination 
\(\nu\) of vector states, where the vectors are taken from \(\mathcal{K}\), 
such that
\[
|\mu(a) - \nu(a)| \;\leq\; \varepsilon \, L_\ell^k(a), \qquad \forall a \in C(\mathbb{G}).
\]

We now construct the required support condition for \(\nu\). Let 
\(a_1, \dots, a_m \in \operatorname{Pol}(\mathbb{G})\) be  elements 
in \(\mathcal{K}\) that determine the vector states comprising \(\nu\). For 
each \(j = 1, \dots, m\), define \(F_{a_j}\) as in \cite[Proposition~2.2]{11}. 
Set
\[
F \;:=\; \bigcup_{j=1}^m F_{a_j}.
\]

By \cite[Proposition~2.2]{11}, the set \(F\) consists of finitely many 
irreducible representations of \(\mathbb{G}\), and the vector states 
corresponding to \(a_j\) are supported on the finite-dimensional subspaces 
\(P_F \mathcal{H}\), where \(P_F\) denotes the projection onto the isotypic 
component of \(F\). Consequently, the state \(\nu\) is supported on 
\(P_F \mathcal{H}\), which implies
\[
\nu(b) = 0 \quad \text{for all } b \in \bigl(P_F C(\mathbb{G}) \bigr)^c.
\]
Since \(F\) is finite, we have \(P_F \leq P_\Lambda\) for some \(\Lambda\). 
Consequently, \(P_F C(\mathbb{G})  \subseteq P_\Lambda C(\mathbb{G})\), 
which yields the desired property.
\end{proof}

\begin{theorem}\label{theorem GH}
Let \(\mathbb{G}\) be a coamenable compact quantum group equipped with a proper length 
function. If \(\bigl(C(\mathbb{G}), L_\ell^k\bigr)\) is a compact quantum 
metric space, then there exists \(\Lambda\) such that we have quantum 
Gromov--Hausdorff convergence
\[
\operatorname{dist}_q\bigl((C(\mathbb{G}),L^k_{\ell}),\, 
(P_\Lambda C(\mathbb{G}), L^k_{\ell,\Lambda})\bigr) \to 0.
\]
\end{theorem}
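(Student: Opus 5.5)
Following the strategy of van Suijlekom and Leimbach--van Suijlekom for tori, the plan is to apply Proposition~\ref{prop:gromov hausdorff converges} with $A=C(\mathbb{G})$, $B=P_\Lambda C(\mathbb{G})P_\Lambda$. This requires a pair of unital positive maps $\Phi=\tau\colon A\to A_\Lambda$ and $\Psi=\sigma^\phi\colon A_\Lambda\to A$ with Lipschitz constants $C,C'\to 1$ and approximation constants $\varepsilon,\varepsilon'\to 0$ as $\Lambda\to\infty$. The first map is the compression $\tau(a)=P_\Lambda aP_\Lambda$. It is unital and completely positive. Because $P_\Lambda$ is a spectral projection of $D_\ell$, it commutes with $D_\ell$, so $\delta^k_\Lambda(\tau(a))=P_\Lambda\delta^k(a)P_\Lambda$. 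This gives $L^k_{\ell,\Lambda}(\tau(a))\le L^k_\ell(a)$, i.e.\ $C=1$.

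For the return map I would use a ``Berezin-type'' symbol built from a state, as in \cite{5}. Fix a state $\phi$ on $C(\mathbb{G})$ supported on $P_{\Lambda'}\mathcal{H}$ for some $\Lambda'\le\Lambda$; by Lemma~\ref{lem:approx_state} it can be taken to be a convex combination of vector states with vectors in $\Lambda(\operatorname{Pol}(\mathbb{G}))$, $\varepsilon$-close to the counit $\varepsilon_{\mathbb{G}}$ in $d^{L^k_\ell}$. Coamenability guarantees that the counit is bounded on $C(\mathbb{G})$, so this approximation makes sense. Define $\sigma^\phi(T)=(\phi_T\otimes\mathrm{id})(\text{coaction})$, concretely $\sigma^\phi(T)=(\mathrm{Tr}(\rho_\phi\,\cdot)\otimes\mathrm{id})\bigl(V(T\otimes 1)V^*\bigr)$ or its $W$-analogue. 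Here $\rho_\phi$ is the density of $\phi$, whose range lies in $P_\Lambda\mathcal{H}$ up to the truncation error. This map is unital and positive.

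The Lipschitz estimate $L^k_\ell(\sigma^\phi(T))\le L^k_{\ell,\Lambda}(T)$ should follow from the computation in Lemma~\ref{lemma left}: $(\mathrm{id}\otimes D_\ell)$ commutes with $W^*$, so commutators pass through the coaction. This gives $C'=1$.

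Next come the two approximation estimates. First, $\sigma^\phi\circ\tau(a)=(\phi\otimes\mathrm{id})\Delta(a)=\phi*a$ by (2.3), since the compression is absorbed when $\rho_\phi$ lives in $P_\Lambda\mathcal{H}$. Then
\[
\|\phi*a-a\|=\|((\phi-\varepsilon_{\mathbb{G}})\otimes\mathrm{id})\Delta(a)\|\le \sup_\psi|(\phi-\varepsilon_{\mathbb{G}})(a*\psi)|\le d(\phi,\varepsilon_{\mathbb{G}})\,L^k_\ell(a),
\]
using bi-invariance (Lemma~\ref{lemma left}). This gives $\varepsilon=d^{L^k_\ell}(\phi,\varepsilon_{\mathbb{G}})$, which tends to $0$ by choosing $\phi$ via Lemma~\ref{lem:approx_state} and letting $\Lambda$ grow.

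The main obstacle is the second estimate, $\|\tau\sigma^\phi(T)-T\|\le\varepsilon' L^k_{\ell,\Lambda}(T)$, on the truncated side. Here $\tau\sigma^\phi(T)=P_\Lambda(\phi*\tilde T)P_\Lambda$ involves a ``symbol'' $\tilde T$ that need not compress back exactly to $T$. For this I would decompose $T=\sum_{|j|\le 2\Lambda}T_j$ into $D$-degree components as in the spectral triple lemma of Section~3. The map $\tau\sigma^\phi$ should act on each isotypic/degree block by a Fourier multiplier $m_\Lambda(\alpha)$ determined by the matrix coefficients of $\phi$, with $m_\Lambda\to 1$ on low degrees. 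I would try to bound
\[
\|T_j-m_\Lambda T_j\|\le |1-m_\Lambda(j)|\,\|T_j\|\le |1-m_\Lambda(j)|\,|j|^{-k}L^k_{\ell,\Lambda}(T).
\]
This uses $\delta^k_\Lambda(T_j)=j^kT_j$ and $\|T_j\|\le\|\delta_\Lambda^kT\|/|j|^k$, obtained by compressing with $Q_m$ blocks. The task is then to show that $\sum_j|1-m_\Lambda(j)||j|^{-k}\to 0$. This is where the choice of $\phi$ (for instance a Fejér-type state concentrated on degrees $\ll\Lambda$) and the compact-quantum-metric hypothesis on $(C(\mathbb{G}),L^k_\ell)$ enter; it replaces the explicit Fejér kernel used for tori. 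Finally, Proposition~\ref{prop:gromov hausdorff converges} with $C=C'=1$ gives $\operatorname{dist}_q\le\max\{\varepsilon,\varepsilon'\}\to0$.
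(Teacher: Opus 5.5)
There is a genuine gap, and it starts with which object you are truncating. In the paper's proof, $P_\Lambda C(\mathbb{G})$ is the Peter--Weyl (Fourier) truncation: the \emph{subspace} of $C(\mathbb{G})$ spanned by the $u^\alpha_{ij}$ with $\ell(\alpha)\le\Lambda$, carrying the restriction of $L^k_\ell$. The compression $P_\Lambda C(\mathbb{G})P_\Lambda$ is treated only afterwards, and there the commutator seminorm is replaced by the coaction-induced $L^{\alpha^\tau,\beta^\tau}$.

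You work on the compression with $L^k_{\ell,\Lambda}(T)=\|\delta^k_\Lambda(T)\|$, and there both key steps fail.
\begin{itemize}
\item \emph{The Lipschitz bound for $\sigma^\phi$.} The computation of Lemma~\ref{lemma left} moves $\delta$ through the second leg of $W^*(1\otimes x)W$. Applied to $\alpha^\tau(\tau(x))=(\tau\otimes\mathrm{id})\Delta(x)$, it only yields $L^k_\ell(\sigma^\phi(T))\le\inf\{L^k_\ell(x):\tau(x)=T\}$. That is a quotient seminorm, not $L^k_{\ell,\Lambda}(T)=\|P_\Lambda\delta^k(x)P_\Lambda\|$, and the two really differ.
\item \emph{A counterexample.} Take $\mathbb{G}=\mathbb{T}$ with $\ell(n)=|n|$; this length function is proper, $\mathbb{T}$ is coamenable, and $(C(\mathbb{T}),L^k_\ell)$ is a compact quantum metric space. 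Write $e_m=z^m$ and set $E=P_\Lambda z^{2\Lambda}P_\Lambda$ with $\Lambda\ge1$. Then $E$ maps $e_{-\Lambda}$ to $e_\Lambda$ and kills every other $e_m$. Both vectors have length $\Lambda$, so $\delta_\Lambda(E)=0$ although $E\notin\mathbb{C}P_\Lambda$.
\item \emph{Consequences.} First, $L^k_{\ell,\Lambda}$ is not even a Lip-norm on the compression, so Proposition~\ref{prop:gromov hausdorff converges} cannot be invoked. Second, $\alpha^\tau(E)=E\otimes z^{2\Lambda}$, so $\sigma^\phi(E)=\phi(E)z^{2\Lambda}$. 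For the vector state of $(e_{-\Lambda}+e_\Lambda)/\sqrt2$ this has $L^k_\ell=\tfrac12(2\Lambda)^k>0$, so no constant $C'$ exists. Third, $\|\tau\sigma^\phi(E)-E\|=|1-\phi(E)|\ge\tfrac12$ for every state, since $|\phi(E)|\le\tfrac12$ for any density matrix on $P_\Lambda\mathcal{H}$. So the estimate you flag as the main obstacle is false, not merely unproved.
\item \emph{The underlying error.} $\tau\sigma^\phi$ acts as a multiplier for the coaction grading by $\alpha$, not for the $D$-degree $j=\ell(\gamma)-\ell(\beta)$. The degree-zero part, which $L^k_{\ell,\Lambda}$ cannot see, is not scalar. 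For tori with the signed Dirac operator the two gradings coincide, which is why the Fej\'er argument works there.
\end{itemize}

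Your first approximation estimate is in fact the whole proof once the target is read correctly, and this is exactly what the paper does.
\begin{itemize}
\item Take $\chi$ from Lemma~\ref{lem:approx_state}: a convex combination of vector states with vectors in $P_{\Lambda'}\mathcal{H}$, satisfying $d^{L^k_\ell}(\chi,\epsilon)\le\varepsilon$. The counit $\epsilon$ is a state by coamenability.
\item Frobenius reciprocity and subadditivity of $\ell$ give $\chi(u^\alpha_{ij})=0$ whenever $\ell(\alpha)>2\Lambda'$. Hence $P(a)=(\mathrm{id}\otimes\chi)\Delta(a)$ maps $C(\mathbb{G})$ into the Fourier truncation for every $\Lambda\ge2\Lambda'$.
\item Bi-invariance gives $L^k_\ell(P(a))\le L^k_\ell(a)$, and your computation gives $\|a-P(a)\|\le\varepsilon L^k_\ell(a)$.
\item Rieffel's subspace criterion, Proposition~\ref{prop:upper_bound}, then gives $\operatorname{dist}_q\le\varepsilon$ for all such $\Lambda$.
\end{itemize}
Only one map into a subspace with the restricted Lip-norm is needed, so no symbol map, second Lipschitz bound or second approximation estimate arises.
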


\begin{proof}
Let \(\epsilon\) be the coidentity of \(\mathbb{G}\), viewed as an element 
of \(S(C(\mathbb{G}))\). By Lemma~\ref{lem:approx_state}, there exists a 
sequence \(\chi_\Lambda\) converging to \(I^{\mathbb{G}}\) in the weak\(^*\) 
topology on \(S(C(\mathbb{G}))\).

Define an operator \(P_{\chi_\Lambda}\) on \(P_\Lambda C(\mathbb{G})\) by
\[
P_{\chi_\Lambda}(u) = \chi_\Lambda * u = (\operatorname{id} \otimes \chi_\Lambda)\Delta(u), 
\qquad u\in C(\mathbb{G}).
\]
The range of \(P_{\chi_\Lambda}\) is contained in \(C(\mathbb{G})_\Lambda\), 
and \(L_{\ell,\Lambda}^k(P_{\chi_\Lambda}(u)) \le L_\ell^k(u)\) follows directly 
from the left‑invariance property and the definition of \(P_{\chi_\Lambda}(u)\).

Now for every self‑adjoint \(u\in C(\mathbb{G})\) (i.e.\ \(u=u^*\)) we have
\[
\begin{aligned}
\|u - P_{\chi_\Lambda}(u)\|
   &= \|\epsilon * (u - P_{\chi_\Lambda}(u))\| \\
   &= \sup_{\phi \in S(\mathbb{G})} \bigl|\phi\bigl(\epsilon *(u - \chi_\Lambda * u)\bigr)\bigr| \\
   &= \sup_{\phi \in S(\mathbb{G})} \bigl|\epsilon(u * \phi) - \chi_\Lambda(u * \phi)\bigr| \\
   &\le \sup_{\phi \in S(\mathbb{G})} \varepsilon\, L_\ell^k(u * \phi) \\
   &\le \varepsilon\, L_\ell^k(u),
\end{aligned}
\]
where the last inequality uses the left‑invariance of \(L_\ell^k\).

Applying Proposition~8.5 of \cite{9} immediately yields that for every 
\(\varepsilon>0\),
\[
\operatorname{dist}_q\bigl((C(\mathbb{G}),L_{\ell}),\, 
(P_\Lambda C(\mathbb{G}), L_{\ell,\Lambda})\bigr) \to 0
\quad\text{for all } \Lambda \ge N.
\]
Thus the required convergence holds.
\end{proof}

However, when considering spectral truncation, the truncation space is given by 
$C(\mathbb{G})_\Lambda := P_\Lambda C(\mathbb{G}) P_\Lambda$. 
In this setting, Proposition 8.5 of \cite{9} no longer holds. 
Consequently, we must  develop a new approach to address this issue.
\begin{definition}[Right Coaction]
A \emph{right coaction} $\alpha$ of the function algebra $A$ on the operator system $X$ is a uci map $\alpha: X \to X \otimes A$ such that the \emph{coaction property}
\begin{equation}\label{eq:coaction-prop}
(\alpha \otimes \mathbf{I}^A)\alpha = (\mathbf{I}^X \otimes \Delta)\alpha
\end{equation}
and the Podle\'s density condition
\[
\operatorname{span}((\mathbf{1}_X \otimes A)\alpha(X)) = X \otimes A
\]
are satisfied. A \emph{left coaction} $\beta: X \to A \otimes X$ is defined analogously.
\end{definition}

For a coaction \(\alpha\colon X\to X\otimes A\), an element \(x\in X\) is called a 
\emph{fixed point} if it satisfies \(x_{(0)}\otimes x_{(1)} = x\otimes \mathbf{1}_A\), 
and the set of all fixed points is denoted by 
\(X^\alpha := \{x\in X \mid \alpha(x)=x\otimes \mathbf{1}_A\}\). 
The coaction \(\alpha\) is said to be \emph{ergodic} if its only fixed points are 
scalar multiples of the unit, i.e.\ \(X^\alpha = \mathbb{C}\mathbf{1}_X\). 
For example, the comultiplication \(\Delta\colon A\to A\otimes A\) is ergodic.

\begin{definition}
We denote by $\tau_{\Lambda}: \mathcal{B}(H) \to \mathcal{B}(H_{\Lambda})$ the \emph{compression map}, given by
\[
\tau_{\Lambda}(T) := P_{\Lambda} T P_{\Lambda},
\]
for all $T \in \mathcal{B}(H)$, and write $A_{\Lambda} := \tau_{\Lambda}(A) \subseteq \mathcal{B}(H_{\Lambda})$ for the image of the function algebra $A$ under the compression map.
\end{definition}

\begin{remark}
Throughout this section we may drop the subindex $\Lambda$ of the projection $P_{\Lambda}$ and the compression map $\tau_{\Lambda}$ whenever convenient.
Note that $A_{\Lambda}$ is an operator system and the compression map $\tau: A \to A_{\Lambda}$ is ucp onto.By \cite{6},We know there exist the following conclusion: there exists unique ergodic commuting right and left coactions 
\(\alpha^{\tau}\colon A_{\Lambda}\to A_{\Lambda}\otimes A\) and \(\beta^{\tau}\colon A_{\Lambda}\to A\otimes A_{\Lambda}\) 
such that 
\((\tau\otimes \mathbf{I}^A)\Delta= \alpha^{\tau}\tau\) and \(( \mathbf{I}^A\otimes\tau)\Delta= \beta^{\tau}\tau\). 
Using Sweedler notation,the equation  reads
\[
\tau(x_{(0)})\otimes x_{(1)} = (\tau(x))_{(0)}\otimes (\tau(x))_{(1)} 
= y_{(0)}\otimes y_{(1)}\in A_{\Lambda}\otimes A,
\]
for all \(x\in A\) and \(y\in A_{\Lambda}\) with \(\tau(x)=y\).

Since the coaction \(\Delta\) is ergodic, then \(\alpha^{\tau}\) is ergodic as well.The same argument applies to the left coaction.
Assume that the function algebra $A$ is equipped with a seminorm $L_A$. For all elements $x \in A_\Lambda$, set
\begin{equation}
\begin{aligned}
L_{ A_\Lambda}^{\alpha^\tau}(x) &:= \sup_{\phi \in S(A_\Lambda)} L_A\!\bigl(\phi(x_{(0)})\,x_{(1)}\bigr), \\
L_{ A_\Lambda}^{\beta^\tau}(x) &:= \sup_{\phi \in S(A_\Lambda)} L_A\!\bigl(x_{(0)}\,\phi(x_{(1)})\bigr), \\
L_{A_{\Lambda}}^{\alpha^\tau,\beta^\tau}
&:= \max\!\left\{L_{A_{\Lambda}}^{\alpha^\tau},\, L_{A_{\Lambda}}^{\beta^\tau}\right\}.
\end{aligned}
\end{equation}
Since $\alpha^\tau$ is unital and $L_\ell^k$ is a regular Lipschitz seminorm, the induced seminorms
$L_{C(\mathbb{G})_\Lambda}^{\alpha^\tau}$, $L_{C(\mathbb{G})_\Lambda}^{\beta^\tau}$, and
$L_{C(\mathbb{G})_\Lambda}^{\alpha^\tau,\beta^\tau}$ are Lip-norms.

 \end{remark}

\begin{definition}
Let $\phi \in \mathcal{S}(A_{\Lambda})$ be any state. We denote the associated slice map by $\sigma_\Lambda^\phi : A_{\Lambda} \to A$, i.e.,
\[
\sigma_\Lambda^\phi(x) := \phi(x_{(0)}) x_{(1)} = (\phi \otimes \mathbf{I}^A) \alpha^\tau(x),
\]
for all $x \in A_{\Lambda}$. We call the map $\sigma_\Lambda^\phi$ a \emph{symbol map}.
\end{definition}

Firstly, we compute the compositions of the compression and symbol maps:
\begin{equation}
\sigma^\phi \tau(a) = \phi(\tau(a)_{(0)}) \tau(a)_{(1)} = \tau^*\phi(a_{(0)}) a_{(1)},
\end{equation}
for all $a \in A$, and
\begin{equation}
\tau \sigma^\phi(x) = \phi(x_{(0)}) \tau(x_{(1)}) = \tau^*\phi(a_{(0)}) \tau(a_{(1)}),
\end{equation}
for all $x \in A_{\Lambda}$ and $a \in A$ with $\tau(a) = x$ where $\tau^*$.

\begin{lemma}
    The maps $\tau: C(\mathbb{G}) \to C(\mathbb{G})_\Lambda$ and $\sigma^\phi : C(\mathbb{G})_\Lambda \to C(\mathbb{G})$ are unital and positive. Furthermore, for all $a \in C(\mathbb{G})$ and $x \in C(\mathbb{G})_\Lambda$, we have
    \[
        L_{C(\mathbb{G})_\Lambda}^{\alpha^{\tau},\beta^{\tau}}(\tau(a)) \leq L_\ell^k(a)
        \quad\text{and}\quad
        L_\ell^k(\sigma^\phi(x)) \leq L_{C(\mathbb{G})_\Lambda}^{\alpha^{\tau},\beta^{\tau}}(x).
    \]
\end{lemma}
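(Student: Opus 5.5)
Unitality and positivity come first and are routine. The compression $\tau(a)=P_\Lambda aP_\Lambda$ is unital because $\tau(1)=P_\Lambda$ is the identity of $\mathcal{B}(P_\Lambda\mathcal{H})$. It is positive because $P_\Lambda a^*aP_\Lambda=(aP_\Lambda)^*(aP_\Lambda)\ge 0$; it is in fact completely positive. The symbol map $\sigma^\phi=(\phi\otimes\mathbf{I}^A)\alpha^\tau$ is a composition of two maps. The first is the ucp coaction $\alpha^\tau$, which the Remark quotes from \cite{6} as a uci map. The second is the slice map $\phi\otimes\mathbf{I}^A$, which is completely positive and unital since $\phi$ is a state. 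Hence $\sigma^\phi(1)=\phi(1)1=1$, and $\sigma^\phi$ is positive.

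For the first Lipschitz inequality, I use the intertwining relation $\alpha^\tau\tau=(\tau\otimes\mathbf{I}^A)\Delta$. This gives, for every $\psi\in S(A_\Lambda)$,
\[
\psi\bigl(\tau(a)_{(0)}\bigr)\tau(a)_{(1)}=\psi(\tau(a_{(0)}))a_{(1)}=\bigl((\psi\circ\tau)\otimes\mathbf{I}^A\bigr)\Delta(a).
\]
Since $\tau$ is ucp, $\psi\circ\tau$ is a state on $C(\mathbb{G})$. By the bi-invariance in Lemma~\ref{lemma left} (with $\|\psi\circ\tau\|=1$),
\[
L_\ell^k\bigl(\psi(\tau(a)_{(0)})\tau(a)_{(1)}\bigr)\le L_\ell^k(a).
\]
Taking the supremum over $\psi$ gives $L^{\alpha^\tau}_{A_\Lambda}(\tau(a))\le L_\ell^k(a)$. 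The same argument with $\beta^\tau\tau=(\mathbf{I}^A\otimes\tau)\Delta$ and right invariance handles $L^{\beta^\tau}_{A_\Lambda}$, and taking the maximum gives the first claim. I would first prove this for $a\in\operatorname{Pol}(\mathbb{G})$, where Lemma~\ref{lemma left} is stated and Sweedler sums are finite. For general $a$ I would then extend by the convention that $L_\ell^k$ is the closure (lower semicontinuous envelope) of its restriction to $\operatorname{Pol}(\mathbb{G})$, or simply interpret the inequality on the domain.

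The second inequality is essentially definitional: $\sigma^\phi(x)=\phi(x_{(0)})x_{(1)}$ is exactly one of the terms in the supremum defining $L^{\alpha^\tau}_{A_\Lambda}(x)$. Hence
\[
L_\ell^k(\sigma^\phi(x))\le L^{\alpha^\tau}_{A_\Lambda}(x)\le L^{\alpha^\tau,\beta^\tau}_{A_\Lambda}(x).
\]

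The main subtlety is whether the Sweedler expression $x_{(0)}\otimes x_{(1)}$ is well defined for $x\in A_\Lambda$, since $\tau$ is not injective. I would handle this by relying on the uniqueness and well-definedness of $\alpha^\tau$ from \cite{6}. An element $x$ of $A_\Lambda$ has the form $\tau(a)$ with $a\in\operatorname{Pol}(\mathbb{G})$, since $P_\Lambda$ has finite rank and only finitely many isotypic components survive compression. For such $x$, the tensor $\alpha^\tau(x)=(\tau\otimes\mathbf{I})\Delta(a)$ lies in $A_\Lambda\odot\operatorname{Pol}(\mathbb{G})$, so all slices lie in $\operatorname{Pol}(\mathbb{G})$, where $L_\ell^k$ is finite. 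This keeps every quantity in the estimates finite and independent of the choice of preimage.
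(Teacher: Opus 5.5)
Your proposal is correct and follows the paper's proof essentially step for step. Unitality and positivity come from $\tau$ being a compression and from $\sigma^\phi=(\phi\otimes\mathbf{I}^A)\alpha^\tau$ being a composition of ucp maps; the first inequality comes from rewriting $\psi(\tau(a)_{(0)})\tau(a)_{(1)}$ as the slice of $\Delta(a)$ by the state $\psi\circ\tau$ and applying the invariance of Lemma~\ref{lemma left}; and the second inequality is immediate from the definition of $L^{\alpha^\tau}_{C(\mathbb{G})_\Lambda}$ as a supremum. Your extra remarks fill in points the paper leaves implicit without changing the argument: that Lemma~\ref{lemma left} is only stated on $\operatorname{Pol}(\mathbb{G})$, and that $C(\mathbb{G})_\Lambda=\tau(\operatorname{Pol}(\mathbb{G}))$, so the Sweedler slices lie in $\operatorname{Pol}(\mathbb{G})$.
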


\begin{proof}
    The map $\tau$ is clearly unital and positive. The map $\sigma^\phi$ is the composition of the unital completely positive maps $\alpha^\tau$ and $\phi \otimes \mathbf{I}^A$; hence it is also unital and positive.
    
    For  $a \in C(\mathbb{G})$ we compute:
    \begin{align*}
        L_{C(\mathbb{G})_\Lambda}^{\alpha^{\tau}}(\tau(a))
        &= \sup_{\phi \in \mathcal{S}(C(\mathbb{G})_\Lambda)} L_{\ell}^k\bigl(\phi(\tau(a)_{(0)}) \tau(a)_{(1)}\bigr) \\
        &= \sup_{\phi \in \mathcal{S}(C(\mathbb{G})_\Lambda)} L_{\ell}^k\bigl(\tau^* \phi(a_{(0)}) a_{(1)}\bigr) \\
        &= \sup_{\psi \in \tau^*\mathcal{S}(C(\mathbb{G})_\Lambda)} L_{\ell}^k\bigl(\psi(a_{(0)}) a_{(1)}\bigr) \\
        &\leqslant \sup_{\psi \in \mathcal{S}(C(\mathbb{G}))} L_{\ell}^k\bigl(\psi(a_{(0)}) a_{(1)}\bigr)\\
        &\leqslant L_{\ell}^k(a),
    \end{align*}
    where the inequality follows from the inclusion $\tau^*\mathcal{S}(C(\mathbb{G})_\Lambda) \subseteq \mathcal{S}(C(\mathbb{G}))$, and the last equality uses the left invariance of $L_\ell^k$. The inequality for $\beta^\tau$ is analogous. Consequently,
    \[
        L_{C(\mathbb{G})_\Lambda}^{\alpha^{\tau},\beta^{\tau}}(\tau(a)) \leq L_\ell^k(a).
    \]
    
    Conversely, for $x \in C(\mathbb{G})_\Lambda$, we have
    \begin{align*}
        L_\ell^k(\sigma^\phi(x))
        &= L_\ell^k(\phi(x_{(0)}) x_{(1)}) \\
        &\leq \sup_{\phi \in \mathcal{S}(C(\mathbb{G})_\Lambda)} L_\ell^k(\phi(x_{(0)}) x_{(1)}) \\
        &= L_{C(\mathbb{G})_\Lambda}^{\alpha^{\tau}}(x).
    \end{align*}
    This completes the proof.
\end{proof}

By \cite{6} proposition 6.8,we know that if \( L_\ell^k \) is a Lipschitz norm on \( C(\mathbb{G}) \) with \( \ker(L_\ell^k) = \mathbb{C}1_{ C(\mathbb{G})} \), and $L_{C(\mathbb{G})_\Lambda}^\alpha$  can be  the induced seminorm on $C(\mathbb{G})$. Let \( \mu, \nu \in S(C(\mathbb{G})) \) be states on \( A \) and consider the induced slice maps \( C(\mathbb{G})_\Lambda \to C(\mathbb{G})_\Lambda \), given by \( x \mapsto x_{(0)}\mu(x_{(1)}) \) and \( x \mapsto x_{(0)}\nu(x_{(1)}) \) respectively. Then the following holds, for all \( x \in C(\mathbb{G})_\Lambda \):
\[
\| x_{(0)}\mu(x_{(1)}) - x_{(0)}\nu(x_{(1)}) \| \leq 2 \, d^{L_\ell^k}(\mu, \nu) \, L_{C(\mathbb{G})_\Lambda}^\alpha(x).
\]

Similarly, if \( \beta: {C(\mathbb{G})_\Lambda} \to C(\mathbb{G}) \otimes {C(\mathbb{G})_\Lambda} \) is a left coaction and \( L_{C(\mathbb{G})_\Lambda}^\beta \) the induced seminorm on \( {C(\mathbb{G})_\Lambda} \), the following holds, for all \( x \in {C(\mathbb{G})_\Lambda} \):
\[
\| \mu(x_{(-1)}) \, x_{(0)} - \nu(x_{(-1)}) \, x_{(0)} \| \leq 2 \, d^{L_\ell^k}(\mu, \nu) \, L_{C(\mathbb{G})_\Lambda}^\beta(x).
\]
 If we consider $a\in C(\mathbb{G})$,since $L_\ell^k$ is bi-invariant,we have the follow includion:
 \[
\| a_{(0)}\mu(a_{(1)}) - a_{(0)}\nu(a_{(1)}) \| \leq 2 \, d^{L_\ell^k}(\mu, \nu) \, L_\ell^k(a),
\]
and
\[
\| \mu(a_{(-1)}) \, a_{(0)} - \nu(a_{(-1)}) \, a_{(0)} \| \leq 2 \, d^{L_\ell^k}(\mu, \nu) \, L_\ell^k(a).
\]

\begin{theorem}\label{theorem Gromov Hausdorff}
Let \(\mathbb{G}\) be a compact quantum group equipped with a proper length 
function. If \(\bigl(C(\mathbb{G}), L_\ell^k\bigr)\) is a compact quantum 
metric space, then  we have quantum 
Gromov--Hausdorff convergence
\[
\operatorname{dist}_q\bigl((C(\mathbb{G}),L_{\ell}^k),\, 
(C(\mathbb{G})_\Lambda, L_{C(\mathbb{G})_\Lambda}^{\alpha^{\tau},\beta^{\tau}})\bigr) \to 0 \qquad as \quad\Lambda\to \infty.
\]
\end{theorem}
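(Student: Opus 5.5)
The plan is to apply Proposition~\ref{prop:gromov hausdorff converges} with $A=C(\mathbb{G})$, $L_A=L_\ell^k$, $B=C(\mathbb{G})_\Lambda$, $L_B=L_{C(\mathbb{G})_\Lambda}^{\alpha^\tau,\beta^\tau}$. We take $\Phi=\tau$, the compression, and $\Psi=\sigma^\phi$, the symbol map for a suitably chosen state $\phi=\phi_\Lambda\in\mathcal{S}(C(\mathbb{G})_\Lambda)$. The preceding lemma already gives unitality and positivity of both maps, together with the Lipschitz bounds $L_B(\tau(a))\le L_\ell^k(a)$ and $L_\ell^k(\sigma^\phi(x))\le L_B(x)$. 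So we may take $C=C'=1$, and the distance is bounded by $\max\{\varepsilon,\varepsilon'\}$. The whole task reduces to making both $\varepsilon$ and $\varepsilon'$ tend to $0$ as $\Lambda\to\infty$.

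For the first composition, by the formula $\sigma^\phi\tau(a)=(\tau^*\phi)(a_{(0)})a_{(1)}$ and $a=\epsilon(a_{(0)})a_{(1)}$, where $\epsilon$ is the counit state, the slice-map estimate stated just before the theorem gives
\[
\|\sigma^\phi\tau(a)-a\|\le 2\,d^{L_\ell^k}(\tau^*\phi,\epsilon)\,L_\ell^k(a).
\]
Thus we need states $\phi_\Lambda$ on $C(\mathbb{G})_\Lambda$ whose pullbacks $\tau^*\phi_\Lambda$ converge to $\epsilon$ in $d^{L_\ell^k}$. Since $(C(\mathbb{G}),L_\ell^k)$ is a compact quantum metric space, $d^{L_\ell^k}$ metrizes the weak$^*$ topology. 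By Lemma~\ref{lem:approx_state}, for each $\eta>0$ there is a finite convex combination $\nu$ of vector states with vectors in $\Lambda(\operatorname{Pol}(\mathbb{G}))$ satisfying $d^{L_\ell^k}(\nu,\epsilon)\le\eta$. Since $\ell$ is proper, these vectors lie in $P_\Lambda\mathcal{H}$ for all large $\Lambda$. Consequently $\nu=\tau^*\phi$, where $\phi$ is the same convex combination of vector states on $C(\mathbb{G})_\Lambda\subseteq\mathcal{B}(P_\Lambda\mathcal{H})$. This yields $\varepsilon_\Lambda:=2\inf_\phi d^{L_\ell^k}(\tau^*\phi,\epsilon)\to0$.

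For the second composition, write $x=\tau(a)$. Then $\tau\sigma^\phi(x)=\phi(x_{(0)})\tau(x_{(1)})$, and the intertwining $\beta^\tau\tau=(\mathrm{id}\otimes\tau)\Delta$ together with coassociativity lets us rewrite this as a left slice $(\tau^*\phi)(x_{(-1)})x_{(0)}$ of $\beta^\tau$. Here I would use that $\alpha^\tau,\beta^\tau$ commute and are determined by $\Delta$ through $\tau$. Meanwhile $x=\epsilon(x_{(-1)})x_{(0)}$. The left-coaction estimate then gives
\[
\|\tau\sigma^\phi(x)-x\|\le 2\,d^{L_\ell^k}(\tau^*\phi,\epsilon)\,L^{\beta^\tau}_{C(\mathbb{G})_\Lambda}(x)\le\varepsilon_\Lambda\,L_B(x).
\]
This is why the maximum of the two seminorms is used in $L_B$. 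Finally, the diameters are finite because both spaces are compact quantum metric spaces, but with $C=C'=1$ they do not enter the bound. Hence $\operatorname{dist}_q\le\varepsilon_\Lambda\to0$.

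The main obstacle is the second composition. One must justify that $\phi(x_{(0)})\tau(x_{(1)})$ equals the left slice of $\beta^\tau(x)$ by $\tau^*\phi$, keeping track of which leg is sliced and that the result is independent of the lift $a$ of $x$. The identity $\tau(\phi'(a_{(0)})a_{(1)})=\phi'(a_{(0)})\tau(a_{(1)})$, read through $\beta^\tau\tau=(\mathrm{id}\otimes\tau)\Delta$, is the computation I would verify carefully first. A secondary point is checking that the vector-state approximation of Lemma~\ref{lem:approx_state} indeed factors through $\tau$, which holds since vector states with vectors in $P_\Lambda\mathcal{H}$ satisfy $\langle a\xi,\xi\rangle=\langle P_\Lambda aP_\Lambda\xi,\xi\rangle$.
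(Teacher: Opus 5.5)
Your proposal is essentially the paper's proof. It uses the same maps $\Phi=\tau$ and $\Psi=\sigma^\phi$ in Proposition~\ref{prop:gromov hausdorff converges} with $C=C'=1$, and bounds both composition errors by $2\,d^{L_\ell^k}(\tau^*\phi,\epsilon)$ times the relevant Lip-norm.

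There are two small differences:
\begin{itemize}
\item Your identity $\tau\sigma^\phi(x)=(\tau^*\phi\otimes\mathrm{id})\beta^\tau(x)$ is what the paper's duality computation over states of $C(\mathbb{G})_\Lambda$ amounts to. It needs only $\alpha^\tau\tau=(\tau\otimes\mathrm{id})\Delta$ and $\beta^\tau\tau=(\mathrm{id}\otimes\tau)\Delta$, not commutation of the coactions.
\item You obtain $\phi_\Lambda$ from Lemma~\ref{lem:approx_state}, since finitely many vectors from $\Lambda(\operatorname{Pol}(\mathbb{G}))$ lie in $P_\Lambda\mathcal{H}$ for large $\Lambda$. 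This is a self-contained replacement for the paper's appeal to Corollary~7.12 of Leimbach's work.
\end{itemize}

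One gap applies equally to your argument and to the paper's: both treat the counit $\epsilon$ as a state on $C(\mathbb{G})$. Here $C(\mathbb{G})$ must be the reduced algebra, because faithfulness of the Haar GNS representation is what gives $\ker L_\ell^k=\mathbb{C}1$. On the reduced algebra, $\epsilon$ is bounded exactly when $\mathbb{G}$ is coamenable. The statement drops that hypothesis, although the earlier Theorem~\ref{theorem GH} has it.

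This is not cosmetic, as the dual of $F_2$ with word length shows:
\begin{itemize}
\item $(C^*_r(F_2),L_\ell^k)$ is a compact quantum metric space for $k\ge 3$, by Haagerup's inequality together with the argument of the fusion-algebra theorem.
\item $L_\ell^k(\lambda_s)=1$ for each generator $s$.
\item $\sigma^\phi\tau(\lambda_s)-\lambda_s=(\tau^*\phi(\lambda_s)-1)\lambda_s$.
\item Kesten gives $\|\tfrac14\sum_s\lambda_s\|=\sqrt3/2$, so for every $\Lambda$ and every $\phi$ some generator has $|1-\tau^*\phi(\lambda_s)|\ge 1-\sqrt3/2$.
\end{itemize}
So no choice of symbol map makes $\varepsilon\to 0$. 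Your argument, like the paper's, is complete only once coamenability of $\mathbb{G}$ is added to the hypotheses.
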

\begin{proof}
By the above discuss,we know that for all $a\in C(\mathbb{G}),x\in C(\mathbb{G})_\Lambda$ with $\tau(a)=x$,
$\|\sigma^{\phi}\tau(a)-a\|=\|\tau^*\phi(a_{(0)})a_{(1)}-\epsilon(a_{(0)})a_{(1)}\|\leq 2d^{L_\ell^k}(\tau^*\phi,\epsilon)L_\ell^k(a)$;and
\begin{align*}
    \|\tau\sigma^{\phi}(x)-x\|&=\|\tau^*\phi(a_{(0)})\tau(a_{(1)})-\epsilon(a_{(0)})\tau(a_{(1)})\|\\
    &\leq 2\sup_{\psi\in S(C(\mathbb{G}))_\Lambda}|\psi(\tau^*\phi(a_{(0)})\tau(a_{(1)})-\epsilon(a_{(0)})\tau(a_{(1)})|\\
    &=2\sup_{\psi\in S(C(\mathbb{G}))_\Lambda}|(\tau^*\phi-\epsilon)(a_{(0)}\tau^*\psi(a_{(1)})|\\
    &\leq 2\sup_{\psi\in S(C(\mathbb{G}))_\Lambda}d^{L_\ell^k}(\tau^*\phi,\epsilon)L_\ell^k(x_{(0)}\psi(x_{1}))\\
    &=2d^{L_\ell^k}(\tau^*\phi,\epsilon)L_{C(\mathbb{G})_\Lambda}^{\alpha^{\tau},\beta^{\tau}}.
\end{align*}
By \cite{6} corollary 7.12,we see that $\|\sigma^{\phi}\tau(a)-a\|\leq \varepsilon L_\ell^k(a)$ and $\|\tau\sigma^{\phi}(x)-x\|\leq \varepsilon L_{C(\mathbb{G})_\Lambda}^{\alpha^{\tau},\beta^{\tau}}$ for all $a\in C(\mathbb{G})$ and $x\in C(\mathbb{G})_\Lambda$.
Hence $\tau$ and $\sigma^\phi$ satisfy the two condition of \ref{prop:gromov hausdorff converges}.Then we have 
\[
\operatorname{dist}_q\bigl((C(\mathbb{G}),L_{\ell}^k),\, 
(C(\mathbb{G})_\Lambda, L_{C(\mathbb{G})_\Lambda}^{\alpha^{\tau},\beta^{\tau}})\bigr) \to 0 \qquad as \quad\Lambda\to \infty.
\]
\end{proof}

\section{Gromov--Hausdorff Convergence of Spectral Truncation for  
Coamenable Compact Quantum Group of Kac Type and Discrete Quantum Groups}

In this section, we discuss two examples: coamenable compact quantum groups 
of Kac type and discrete quantum groups. 

For coamenable compact quantum groups of Kac type, it follows from \cite{16} 
that $(C(\mathbb{G}), L_\ell)$ is a compact quantum metric space if and only 
if $(F(\mathbb{G}), L_\ell)$ is a compact quantum metric space. Later in this 
section, we will show that $(C(\mathbb{G}), L_\ell^k)$ is a compact quantum 
metric space if and only if $(F(\mathbb{G}), L_\ell^k)$ is a compact quantum 
metric space. And, by the results of Theorem~\ref{compact quantum metric space}, 
if $\mathbb{G}$ has polynomial growth, then there exists $k$ such that $(C(\mathbb{G}), L_\ell^k)$ is a 
compact quantum metric space.

On the other hand, for discrete quantum groups, it is known from \cite{23} 
that a discrete quantum group of rapid decay can also be equipped with a 
compact quantum metric space structure.

We first recall the definition of the algebra of central functions,
\begin{equation}\label{eq:central-functions}
    C_z(\mathbb{G}) := \{a \in C(\mathbb{G}) \mid \sigma\Delta(a) = \Delta(a)\},
\end{equation}
where $\sigma$ denotes the flip automorphism on $C(\mathbb{G})\otimes C(\mathbb{G})$.
This algebra characterizes precisely the conditions under which 
$\bigl(C(\mathbb{G}),\, L_\ell \bigr)$, where $L_\ell$ denotes the Lip-seminorm 
induced by the length function $\ell$, forms a \emph{compact quantum metric space} 
in the sense of Rieffel.

Indeed, by \cite[Theorem~A]{16}, if $\mathbb{G}$ is a compact, coamenable 
quantum group of Kac type and $\ell:\operatorname{Irred}(\mathbb{G})\to[0,\infty)$ 
is a proper length function, then $(C(\mathbb{G}),L_\ell)$ is a compact quantum 
metric space if and only if $(C_z(\mathbb{G}),L_\ell)$ is.

Let $\Lambda$ be the GNS representation of $C(\mathbb{G})$ on $L^2(\mathbb{G})$. 
By construction, the unbounded operator $D_\ell$ on $L^2(\mathbb{G})$ (defined 
in Section~3.1) leaves the subspace $\Lambda(\operatorname{Pol}_z(\mathbb{G}))$ 
invariant. Its restriction to this subspace is essentially self‑adjoint; we 
denote its self‑adjoint closure (still written as $D_\ell^z$) by the same symbol. 
Consequently $(\operatorname{Pol}_z(\mathbb{G}), L^2_z(\mathbb{G}), D_\ell^z)$ 
becomes a spectral triple, where 
$L^2_z(\mathbb{G}) := \overline{\Lambda(\operatorname{Pol}_z(\mathbb{G}))}$.

\begin{definition}
    A state $\varphi\in S(C(\mathbb{G}))$ is called \emph{central} if 
    $(\varphi \otimes \mathrm{id})\Delta = (\mathrm{id}\otimes \varphi)\Delta$.
\end{definition}

\begin{remark}
    In \cite{16}, Austad and Kyed defined a normal conditional expectation 
    $E:\mathbb{B}(L^2(\mathbb{G}))\to \mathbb{B}(L^2(\mathbb{G}))$ given by 
    $E(a)=(h\otimes \mathrm{id})\partial(a)$, where 
    $\partial(\cdot):=Z^*(1\otimes \cdot)Z$. And they prove that a state 
    $\varphi\in S(C(\mathbb{G}))$ is central if and only if 
    $\varphi=\varphi \circ E$ and there exists a sequence of central states 
    $\{\chi_n\}_{n\in \mathbb{N}}$ where are supported in only finitely many matrix cofficients converging to $\epsilon$ in the weak$^*$ 
    topology on $S(C(\mathbb{G}))$.
\end{remark}

\begin{theorem}\label{compact quantum metric space}
 If $\mathbb{G}$ is a compact, coamenable quantum group of Kac type and 
 $\ell:\operatorname{Irred}(\mathbb{G})\to[0,\infty)$ is a proper length 
 function, then $(C(\mathbb{G}),L_\ell^k)$ is a compact quantum metric space 
 if and only if $(C_z(\mathbb{G}),L_\ell^k)$ is.
\end{theorem}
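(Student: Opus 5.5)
The plan is to follow the strategy of Austad and Kyed \cite[Theorem~A]{16} for $k=1$, replacing $[D_\ell,\cdot]$ by the iterated commutator $\delta^k$ throughout. Only two structural properties of $L_\ell^k$ enter their argument: bi-invariance, which is Lemma~\ref{lemma left}, and compatibility with the conditional expectation $E$ onto the central part. The direction ``$(C(\mathbb{G}),L_\ell^k)$ compact quantum metric $\Rightarrow$ $(C_z(\mathbb{G}),L_\ell^k)$ compact quantum metric'' should be essentially formal. The restriction of a Lip-norm to a unital operator subsystem containing a dense domain is again a Lip-norm. Rieffel's criterion gives this: the image of the unit ball of $L_\ell^k$ in $C(\mathbb{G})/\mathbb{C}$ is totally bounded, hence so is its intersection with $C_z(\mathbb{G})/\mathbb{C}$, and the quotient norms coincide up to a factor $2$. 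The only point to verify is that $\operatorname{Pol}_z(\mathbb{G})=\operatorname{span}\{\chi_\alpha\}$ is dense in $C_z(\mathbb{G})$. This follows because $E$ maps $C(\mathbb{G})$ onto $C_z(\mathbb{G})$ and $\operatorname{Pol}(\mathbb{G})$ onto $\operatorname{Pol}_z(\mathbb{G})$.

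For the converse, I would first show that $E$ commutes with $D_\ell$ on $\Lambda(\operatorname{Pol}(\mathbb{G}))$. The Kac assumption is used here, together with $Z=W\Sigma V\Sigma$ commuting with $1\otimes D_\ell$. That commutation is obtained by combining the matrix-coefficient computation from Lemma~\ref{lemma left} for $W$ with the analogous one for $V$. It follows that $\delta^k(E(a))=E(\delta^k(a))$, and since $E$ is a contraction, $L_\ell^k(E(a))\le L_\ell^k(a)$.

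Next I would use the sequence of central states $\chi_n$ from the Remark, each finitely supported and with $\chi_n\to\epsilon$ weak$^*$. Coamenability enters here. The idea is to approximate $a\in C(\mathbb{G})$ with $L_\ell^k(a)\le1$ by $\chi_n*a=(\mathrm{id}\otimes\chi_n)\Delta(a)$. This element lies in the finite-dimensional space spanned by the matrix coefficients in $\operatorname{supp}\chi_n$, and Lemma~\ref{lemma left} gives $L_\ell^k(\chi_n*a)\le L_\ell^k(a)$. To control $\|a-\chi_n*a\|$ I would repeat the computation in the proof of Theorem~\ref{theorem GH}: $\|a-\chi_n*a\|\le \sup_\phi|\epsilon(a*\phi)-\chi_n(a*\phi)|\le d^{L_\ell^k}(\epsilon,\chi_n)\,L_\ell^k(a)$, possibly up to a factor $2$ for non-self-adjoint $a$.

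The crux is showing that $d^{L_\ell^k}(\epsilon,\chi_n)\to0$ using only the compact quantum metric structure of $C_z(\mathbb{G})$. Since $\epsilon$ and $\chi_n$ are central, they factor through $E$: $\chi_n(a)=\chi_n(E(a))$ and $\epsilon(a)=\epsilon(E(a))$. For $L_\ell^k(a)\le1$ we also have $L_\ell^k(E(a))\le1$ with $E(a)\in C_z(\mathbb{G})$. Therefore $d^{L_\ell^k}(\epsilon,\chi_n)$ on $C(\mathbb{G})$ is bounded by the distance of the restrictions computed in $(C_z(\mathbb{G}),L_\ell^k)$. That distance tends to $0$ because in a compact quantum metric space weak$^*$ convergence coincides with metric convergence.

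Given $\varepsilon>0$, I would then choose $n$ so that this distance is less than $\varepsilon$. The unit ball of $L_\ell^k$, modulo scalars, then lies within $2\varepsilon$ of a bounded subset of a finite-dimensional space. It is bounded because $\|\chi_n*a\|\le\|a\|$ and $\|a-\lambda1\|$ is controlled once more by diameter considerations via $\epsilon$. So the unit ball is totally bounded, and Rieffel's criterion concludes.

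The main obstacle I expect is the step identifying $\delta^k\circ E=E\circ\delta^k$, that is, making precise that $\partial(\cdot)=Z^*(1\otimes\cdot)Z$ intertwines $[D_\ell,\cdot]$ with $[1\otimes D_\ell,\cdot]$ on unbounded domains and iterating this $k$ times. A related issue is the boundedness of $\{a-\epsilon(a)1 : L_\ell^k(a)\le1\}$. I would first try to handle the latter directly from centrality of the Haar state $h$ and the bound $\|a-h(a)\|\le d^{L_\ell^k}(h,\epsilon)+\|\chi_n*a-h(a)\|$, which reduces it to the finite-dimensional, invariant situation.
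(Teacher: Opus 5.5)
Your proposal is correct and is essentially the paper's proof. It uses the same ingredients: the intertwining $\delta^k\circ E=E\circ\delta^k$ obtained from $Z$ commuting with $1\otimes D_\ell$; the finitely supported central states $\chi_n\to\epsilon$ coming from coamenability; the estimate $\|a-(\mathrm{id}\otimes\chi_n)\Delta(a)\|\le d^{L_\ell^k}(\epsilon,\chi_n)\,L_\ell^k(a)$; and the transfer of $d^{L_\ell^k}(\epsilon,\chi_n)$ to $C_z(\mathbb{G})$ through $E$. One small correction: the Kac hypothesis is what gives $E(a)\in C_z(\mathbb{G})$, not what gives the commutation.

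There are two differences from the paper. First, you write out the easy restriction direction, which the paper's proof omits. Second, you replace the paper's appeal to [27, Theorem~3.1] with a direct total-boundedness argument. In that argument, boundedness modulo scalars is cleanest if you take $\chi=h$ in the same estimate, which gives $\|a-h(a)1\|\le d^{L_\ell^k}(\epsilon,h)\,L_\ell^k(a)$. Here $d^{L_\ell^k}(\epsilon,h)<\infty$ because $h$ and $\epsilon$ are both central, so the $\chi_n$-term in your final bound is superfluous.
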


\begin{proof}
 Similarly to $W$, the operator $\mathrm{id} \otimes D_\ell$ commutes with 
 $\Sigma V \Sigma$, $\Sigma V^* \Sigma$, $Z$ and $Z^*$, as operators on the 
 dense subspace 
 $\Lambda(\operatorname{Pol}(\mathbb{G}))\odot\Lambda(\operatorname{Pol}(\mathbb{G}))
 \subset L^2(\mathbb{G})\hat{\otimes}L^2(\mathbb{G})$.

 For $a,x,y \in \operatorname{Pol}(\mathbb{G})$, consider the relation between 
 $L_\ell^k(E(a))$ and $L_\ell^k(a)$:
\begin{align*}
    &\langle \Lambda(x),\delta^k(E(a))\Lambda(y)\rangle\\
    &=\langle \Lambda(x),[D_\ell,[D_\ell,\dots,[D_\ell,E(a)]\dots]]\Lambda(y)\rangle \\
    &= \sum_{l=0}^{k}(-1)^{k-l}
       \langle\Lambda(x),D_\ell^l\bigl((h\otimes \mathrm{id})\partial(a)\bigr)
       D_\ell^{k-l}\Lambda(y)\rangle\\
    &=\sum_{l=0}^{k}(-1)^{k-l}
       \langle \Lambda(1)\otimes D_\ell^l\Lambda(x),\,Z^*(1\otimes a) Z(\Lambda(1)\otimes D_\ell^{k-l}\Lambda(y))\rangle\\
    &=\sum_{l=0}^{k}(-1)^{k-l}
       \langle \Lambda(1)\otimes\Lambda(x),Z^*
       (1\otimes D_\ell^l(a)D_\ell^{k-l})Z\,(\Lambda(1)\otimes \Lambda(y))\rangle\\
    &=\langle \Lambda(x),\,(h\otimes \mathrm{id})\partial(
       [D_\ell,[D_\ell,\dots,[D_\ell,a]\dots]])\,\Lambda(y)\rangle\\
    &=\langle \Lambda(x),E(\delta^k(a))\Lambda(y)\rangle .
\end{align*}

 Then $\delta^k(E(a)) = E(\delta^k(a))$ and by \cite[Proposition~4.2]{16} we 
 know the map defines a normal conditional expectation, hence 
 $L_\ell^k(E(a)) = \|\delta^k(E(a))\| = \|E(\delta^k(a))\| \le L_\ell^k(a)$.

 Since for any two central states $\varphi$ and $\psi$ on $C(\mathbb{G})$, 
 $\varphi = \varphi \circ E$ and $\psi = \psi \circ E$. When $\mathbb{G}$ is 
 of Kac type, by \cite[Proposition~4.2]{16} we know $E$ maps 
 $\operatorname{Pol}(\mathbb{G})$ to $\operatorname{Pol}_z(\mathbb{G})$. Then
\begin{align*}
    d_\ell(\varphi, \psi) &:= \sup \bigl\{ |\varphi(a) - \psi(a)| \; ; \; 
        a \in \operatorname{Pol}(\mathbb{G}), \; L_\ell^k(a) \leq 1 \bigr\} \\
    &= \sup \bigl\{ |\varphi(a) - \psi(a)| \; ; \; 
        a \in \operatorname{Pol}_z(\mathbb{G}), \; L_\ell^k(a) \leq 1 \bigr\} \\
    &=: d_\ell^z \bigl( \varphi|_{C_z(\mathbb{G})}, \; \psi|_{C_z(\mathbb{G})} \bigr).
\end{align*}

 For any $\chi\in S(C(\mathbb{G}))$ and for any $a\in C(\mathbb{G})$, we denote 
 by $\alpha_\chi:C(\mathbb{G})\to C(\mathbb{G})$ the ucp map defined by 
 $\alpha_\chi(a)=(\mathrm{id}\otimes \chi)\Delta(a)$.
\begin{align*}
    \|a-\alpha_\chi(a)\|
    &=\sup\{\,|\varphi(a-\alpha_\chi(a))| \mid \|\varphi\|\leq 1\,\}  \\
    &=\sup\{\,|\varphi(\mathrm{id}\otimes(\epsilon-\chi))\Delta a| 
        \mid \|\varphi\|\leq 1\,\}\\
    &\leq \sup\{\,d_\ell(\epsilon,\chi) \cdot L_\ell^k((\varphi\otimes\mathrm{id})\Delta a) 
        \mid \|\varphi\|\leq 1\,\}\\
    &\leq d_\ell(\epsilon,\chi)\cdot L_\ell^k(a).
\end{align*}

 By Remark~5.2 , there exists a sequence of central states 
 $\chi_n$ converging to $\epsilon$, hence we obtain:
\begin{align*}
    \|a-\alpha_{\chi_n}(a)\|
    &=\|\alpha_\epsilon(a)-\alpha_{\chi_n}(a)\| \\
    &\leq d_\ell(\epsilon,\chi_n)\cdot L_\ell^k(a)
     = d_\ell\bigl(\epsilon|_{C_z(\mathbb{G})},\chi_n|_{C_z(\mathbb{G})}\bigr)
       \cdot L_\ell^k(a).
\end{align*}

 Since each $\alpha_{\chi_n}$ is ucp with finite-dimensional image and 
 $\lim_{n\to\infty} d_\ell^z\bigl(\epsilon|_{C_z(\mathbb{G})},\chi_n|_{C_z(\mathbb{G})}\bigr)\\=0$, 
 by \cite[Theorem~3.1]{27} the proof is complete.
\end{proof}

In \cite{26}, Thomas Timmerman introduced the Schur orthogonality relations 
between the fusion algebra and the central function algebra. Furthermore, in 
\cite{16}, Austad and Kyed established several lemmas regarding these relations. 
Summarizing these results, we obtain the following proposition.

\begin{proposition}\label{prop unitary}
If $\mathbb{G}$ is a compact quantum group, then the spectral triples 
$\bigl(F(\mathbb{G}), \ell^2(\operatorname{Irred}(\mathbb{G})), \tilde{D}_\ell\bigr)$ 
and 
$\bigl(\operatorname{Pol}_z(\mathbb{G}), L^2_z(\mathbb{G}), D_\ell^z\bigr)$ 
are unitarily equivalent via the unitary $\tilde{\chi}$.
\end{proposition}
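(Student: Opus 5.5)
We define $\tilde{\chi}\colon \ell^2(\operatorname{Irred}(\mathbb{G}))\to L^2_z(\mathbb{G})$ on the basis by sending $\alpha$ to the GNS image of its character,
\[
\tilde{\chi}(\alpha) := \Lambda(\chi_\alpha), \qquad \chi_\alpha = \sum_{i=1}^{d_\alpha} u^\alpha_{ii},
\]
extended linearly to $c_c(\operatorname{Irred}(\mathbb{G}))$. We then check three things: $\tilde{\chi}$ is a unitary, it intertwines the representations $\Lambda(f)$ and $\Lambda(\tilde{\chi}(f))$, and it intertwines $\tilde{D}_\ell$ with $D^z_\ell$.

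\textbf{Unitarity.} First note that $\operatorname{Pol}_z(\mathbb{G})$ is spanned by the characters $\chi_\alpha$; this is the Timmermann / Austad--Kyed description of central polynomial functions. Next we use the Schur orthogonality relations for the Haar state. Since $\langle\alpha,\beta\rangle = \delta_{\alpha,\beta}\, d(\alpha)^2$, we need
\[
\langle \Lambda(\chi_\alpha),\Lambda(\chi_\beta)\rangle = h(\chi_\alpha^*\chi_\beta) = \delta_{\alpha,\beta}\, d(\alpha)^2 .
\]
This is where the normalization convention matters. The paper states that $\{\Lambda(u^\alpha_{ij})\}$ is an orthonormal basis, i.e. the matrix coefficients are rescaled by $d(\alpha)$ (Kac-type rescaling). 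With that rescaling $\|\Lambda(\chi_\alpha)\|^2 = d_\alpha$, and one rescales so that the norm matches $d(\alpha)^2$. In the non-Kac case one instead uses the quantum dimension together with the $F$-matrix twisted form of Schur orthogonality. Whichever convention is fixed, $\tilde{\chi}$ is an isometry from the dense subspace $c_c$ onto the dense subspace $\Lambda(\operatorname{Pol}_z(\mathbb{G}))$, so it extends to a unitary.

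\textbf{Intertwining the algebras.} The fusion rules give $\chi_\alpha\chi_\beta = \sum_\gamma N^\gamma_{\alpha,\beta}\chi_\gamma$, because the character of $\alpha\otimes\beta$ is $\chi_\alpha\chi_\beta$ and characters are additive over the decomposition (3.1). Also $\chi_\alpha^* = \chi_{\bar\alpha}$. Hence
\[
\tilde{\chi}\,\Lambda(f)\,\tilde{\chi}^* = \Lambda(\tilde{\chi}(f))
\]
as operators on $L^2_z(\mathbb{G})$, and $f\mapsto \chi_f$ is a $*$-isomorphism $F(\mathbb{G})\cong \operatorname{Pol}_z(\mathbb{G})$. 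Any rescaling of $\tilde{\chi}$ made for unitarity must be applied on vectors only, not on the algebra map, so that this multiplicativity survives; this should be checked explicitly.

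\textbf{Intertwining the Dirac operators.} Each $\chi_\alpha$ lies in the $\ell(\alpha)$-eigenspace of $D_\ell$, so
\[
D^z_\ell\,\tilde{\chi}(\alpha) = \ell(\alpha)\,\tilde{\chi}(\alpha) = \tilde{\chi}\,\tilde{D}_\ell(\alpha).
\]
Thus $\tilde{\chi}$ maps the core $c_c$ onto the core $\Lambda(\operatorname{Pol}_z(\mathbb{G}))$ and intertwines the two operators there. Both operators are essentially self-adjoint on these cores (each is diagonal in an orthogonal basis), so $\tilde{\chi}\tilde{D}_\ell\tilde{\chi}^* = D^z_\ell$ on the closures.

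\textbf{Main obstacle.} The delicate step is the normalization: reconciling the weight $d(\alpha)^2$ in the $\ell^2$ inner product with $h(\chi_\alpha^*\chi_\alpha)$. This is automatic for Kac type, where $h(\chi_\alpha^*\chi_\alpha)=1$ for standard coefficients. In general it requires the $F$-matrix form of Schur orthogonality, possibly applied to rescaled coefficients. I would first settle this for Kac type and then indicate the twisted modification, citing the relevant lemmas of \cite{16}.
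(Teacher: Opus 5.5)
Your skeleton is the paper's: send $\alpha$ to its character, use Schur orthogonality for unitarity, use $\chi_\alpha\chi_\beta=\sum_\gamma N^\gamma_{\alpha,\beta}\chi_\gamma$ and $\chi_\alpha^*=\chi_{\bar\alpha}$ to intertwine the representations, and note that both Dirac operators are diagonal. But you never actually carry out the unitarity step, and the repair you sketch for it breaks the intertwining step.

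You try to make $\tilde\chi$ isometric for the weight $\langle\alpha,\beta\rangle=\delta_{\alpha,\beta}d(\alpha)^2$ by rescaling on vectors, $U\alpha=c_\alpha\Lambda(\chi_\alpha)$ with $c_\alpha=d(\alpha)$. Putting orthonormalized coefficients $\sqrt{d_\alpha}\,u^\alpha_{ij}$ inside the trace is the same kind of rescaling. The check you postpone fails:
\begin{itemize}
\item $U\Lambda_0(\beta)\alpha=\sum_\gamma N^\gamma_{\beta,\alpha}c_\gamma\Lambda(\chi_\gamma)$, whereas $\lambda(\chi_\beta)U\alpha=c_\alpha\sum_\gamma N^\gamma_{\beta,\alpha}\Lambda(\chi_\gamma)$.
\item So intertwining forces $c_\gamma=c_\alpha$ whenever $\gamma\subset\beta\otimes\alpha$. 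Given $\alpha,\gamma$, any irreducible $\beta\subset\gamma\otimes\bar\alpha$ gives such a triple, so $c$ must be constant.
\item Concretely, for $SU(2)$ with $\alpha=\beta$ the spin-$\tfrac12$ representation and $c=d$, you get $\Lambda(\chi_0)+3\Lambda(\chi_1)$ against $2\Lambda(\chi_0)+2\Lambda(\chi_1)$.
\end{itemize}
No other convention for $\tilde\chi$ can rescue the weighted space either. There $\langle\Lambda_0(\beta)\alpha,\gamma\rangle=N^\gamma_{\beta,\alpha}d(\gamma)^2$ and $\langle\alpha,\Lambda_0(\bar\beta)\gamma\rangle=N^\alpha_{\bar\beta,\gamma}d(\alpha)^2$. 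Frobenius reciprocity makes the two $N$'s equal, so $\Lambda_0(\beta)^*\neq\Lambda_0(\bar\beta)$ in general. Hence $\Lambda_0$ is not a $*$-representation on the weighted space, and cannot be unitarily equivalent to the $*$-representation $f\mapsto\lambda(\chi_f)$.

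The missing point is that no rescaling is needed, because the proposition holds (and is what the paper proves) only with $\{\delta_\alpha\}$ an orthonormal basis of $\ell^2(\Irred(\mathbb{G}))$. The paper's proof uses exactly this, despite the $d(\alpha)^2$ weight announced in Section~3. Your original, unrescaled map $\tilde\chi(\delta_\alpha)=\Lambda(\chi_\alpha)$, built from the genuine corepresentation matrices, is the right one. Unitarity is then exactly $h(\chi_\alpha^*\chi_\beta)=\delta_{\alpha,\beta}$, which the paper obtains from Schur orthogonality, and your fusion-rule computation gives the intertwining of the representations verbatim.

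Your ``main obstacle'' in the non-Kac case is also illusory. The identity $h(\chi_\alpha^*\chi_\beta)=\delta_{\alpha,\beta}$ holds for every compact quantum group: $\chi_\alpha^*\chi_\beta$ is the character of $\bar\alpha\otimes\beta$, and $h$ of a character counts the multiplicity of the trivial corepresentation. No $F$-matrix twisting enters.

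Your handling of the Dirac operators, via essential self-adjointness on the matched cores, is fine and slightly more careful than the paper's.
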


\begin{proof}
We construct the unitary $\tilde{\chi}$ explicitly and then verify that it 
intertwines the representations and the Dirac operators.

\emph{Step 1: Definition and unitary extension of $\tilde{\chi}$.}  
Define a map $\chi : c_c(\operatorname{Irred}(\mathbb{G})) \to L^2_z(\mathbb{G})$ 
on the basis elements by
\[
\chi(\alpha) = \sum_{i=1}^{d_\alpha} \Lambda(u_{ii}^\alpha),\qquad 
\alpha\in\operatorname{Irred}(\mathbb{G}),
\]
where $\Lambda:\operatorname{Pol}(\mathbb{G})\hookrightarrow L^2(\mathbb{G})$ is 
the canonical inclusion. By construction, the image of $\chi$ is dense in 
$L^2_z(\mathbb{G})$.

The Schur orthogonality relations \cite[Proposition~5.3.8~(iii)]{15} yield
\[
\langle \chi(\alpha),\chi(\beta) \rangle_{L^2}
   = \sum_{i=1}^{d_\alpha}\sum_{j=1}^{d_\beta} h\bigl((u_{ii}^\alpha)^* u_{jj}^\beta\bigr)
   = \delta_{\alpha,\beta}\sum_{i=1}^{d_\alpha} \frac{1}{d_\alpha}
   = \delta_{\alpha,\beta}.
\]
Hence $\chi$ sends the orthonormal basis $\{\delta_\alpha\}_{\alpha\in\operatorname{Irred}(\mathbb{G})}$ 
of $\ell^2(\operatorname{Irred}(\mathbb{G}))$ onto an orthonormal basis of 
$L^2_z(\mathbb{G})$, and therefore extends uniquely to a unitary operator
\[
\tilde{\chi}: \ell^2(\operatorname{Irred}(\mathbb{G})) \longrightarrow L^2_z(\mathbb{G}).
\]

\emph{Step 2: Intertwining of the representations.}  
Recall that $F(\mathbb{G})$ acts on $\ell^2(\operatorname{Irred}(\mathbb{G}))$ via 
the left regular representation $\Lambda_0$ of the fusion algebra, and that $\lambda$ 
denotes the left multiplication action of $\operatorname{Pol}(\mathbb{G})$ on 
$L^2(\mathbb{G})$. For any $\alpha,\beta\in\operatorname{Irred}(\mathbb{G})$ we have
\[
\tilde{\chi}\Lambda_0(\beta)(\delta_\alpha)
   = \tilde{\chi}(\beta\cdot\alpha)
   = \chi(\beta\cdot\alpha)
   = \lambda\bigl(\chi(\beta)\bigr)\chi(\alpha)
   = \lambda\bigl(\chi(\beta)\bigr)\tilde{\chi}(\delta_\alpha),
\]
where we used that $\chi$ is a unital $*$-isomorphism from $F(\mathbb{G})$ onto 
$\operatorname{Pol}_z(\mathbb{G})$ and that $\lambda$ is a faithful representation. 
By linearity this identity extends to all finitely supported functions, so for 
every $x\in F(\mathbb{G})$ we obtain
\[
\Lambda_0(x) = \tilde{\chi}^* \lambda(\chi(x)) \tilde{\chi}
\]
as operators on the dense subspace $c_c(\operatorname{Irred}(\mathbb{G}))$. 
Consequently $\Lambda_0(x)$ extends to a bounded operator on 
$\ell^2(\operatorname{Irred}(\mathbb{G}))$, and this extension satisfies 
$\Lambda(x)=\tilde{\chi}^*\,\lambda(\chi(x))\,\tilde{\chi}$.

\emph{Step 3: Intertwining of the Dirac operators.}  
For any $\alpha\in\operatorname{Irred}(\mathbb{G})$ we compute
\[
\tilde{\chi}\tilde{D}_\ell(\delta_\alpha) = \ell(\alpha)\,\tilde{\chi}(\delta_\alpha)
                                   = \ell(\alpha)\,\chi(\alpha)
                                   = D_\ell^z\,\tilde{\chi}(\delta_\alpha),
\]
where $\ell(\alpha)$ denotes the length of $\alpha$ (i.e., the eigenvalue of the 
Dirac operator on the basis vector $\delta_\alpha$). Thus $\tilde{\chi}$ maps the 
core $\operatorname{span}\{\delta_\alpha\}$ into $\operatorname{Dom}(D_\ell^z)$ 
and satisfies $\tilde{\chi}\tilde{D}_\ell = D_\ell^z\tilde{\chi}$ on this core. 
It follows immediately that $\tilde{\chi}(\operatorname{Dom}(\tilde{D}_\ell))
\subset\operatorname{Dom}(D_\ell^z)$ and the equality holds on the whole domain. 
Hence $\tilde{\chi}$ is a unitary equivalence between the two spectral triples.
\end{proof}

\begin{theorem}\label{Theorem5.5}
  Let $\mathbb{G}$ be a coamenable compact quantum group equipped with a proper length 
  function. If $\bigl(\operatorname{Pol}_z(\mathbb{G}), L_{z,\ell}^k\bigr)$ 
  is a compact quantum metric space and the  truncations $\bigl(P_\Lambda\operatorname{Pol}_z(\mathbb{G}), L_{z,\ell,\Lambda}^k\bigr)$ of 
  $\bigl(\operatorname{Pol}_z(\mathbb{G}), L_{z,\ell}^k\bigr)$ exhibit 
  Gromov--Hausdorff convergence, then 
  \[
  \operatorname{dist}_q\bigl(F(\mathbb{G}), L_{\ell}^k),\, 
  (P_\Lambda F(\mathbb{G}), L^k_{\ell,\Lambda})\bigr) \to 0 \qquad as \quad\Lambda\to \infty.
  \]
\end{theorem}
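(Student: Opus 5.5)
The plan is to transport everything through the unitary equivalence of Proposition~\ref{prop unitary}. That equivalence should make both the quantum metric spaces and their spectral truncations on the fusion algebra side isometric to those on the central side. The convergence assumed for $\operatorname{Pol}_z(\mathbb{G})$ then transfers verbatim, because the quantum Gromov--Hausdorff distance is invariant under isometric isomorphisms of compact quantum metric spaces.

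First, conjugation by $\tilde{\chi}$ gives, by Proposition~\ref{prop unitary}, the identities
\[
\Lambda(x)=\tilde{\chi}^{*}\lambda(\chi(x))\tilde{\chi}
\quad\text{and}\quad
\tilde{\chi}\tilde{D}_\ell=D_\ell^z\tilde{\chi}.
\]
An induction on $k$ then yields $\delta^k(x)=\tilde{\chi}^*\,\delta_z^k(\chi(x))\,\tilde{\chi}$ on the core $c_c(\operatorname{Irred}(\mathbb{G}))$, where $\delta_z$ denotes the commutator with $D_\ell^z$. Since $\tilde{\chi}$ is unitary, $L_\ell^k(x)=L_{z,\ell}^k(\chi(x))$ for all $x\in F(\mathbb{G})$. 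Moreover $\chi$ is a unital $*$-isomorphism $F(\mathbb{G})\to\operatorname{Pol}_z(\mathbb{G})$, and after the above identification it is isometric for the operator norms. Hence $\chi$ induces an affine weak$^*$-homeomorphism of state spaces that is isometric for $d^{L}$. In particular $(F(\mathbb{G}),L_\ell^k)$ is a compact quantum metric space, and
\[
\operatorname{dist}_q\bigl((F(\mathbb{G}),L_\ell^k),(\operatorname{Pol}_z(\mathbb{G}),L^k_{z,\ell})\bigr)=0,
\]
realised by the Lip-norm $L(x,y)=\max\{L_\ell^k(x),L_{z,\ell}^k(y),\|\chi(x)-y\|/\varepsilon\}$ for arbitrary $\varepsilon$.

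Second, the truncations. Since $\tilde{\chi}$ intertwines the Dirac operators, it intertwines their spectral projections: $\tilde{\chi}P_\Lambda=P^z_\Lambda\tilde{\chi}$, where both map onto the span of the $\alpha$ with $\ell(\alpha)\le\Lambda$. Consequently the compressions $P_\Lambda\Lambda(x)P_\Lambda$ and $P^z_\Lambda\lambda(\chi(x))P^z_\Lambda$ are unitarily equivalent. Likewise $D_{\ell,\Lambda}$ is equivalent to $D^z_{\ell,\Lambda}$, so the truncated seminorms agree: $L^k_{\ell,\Lambda}=L^k_{z,\ell,\Lambda}\circ\chi_\Lambda$, with $\chi_\Lambda$ the induced complete order isomorphism of truncated operator systems. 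Again the quantum Gromov--Hausdorff distance between the two truncations is $0$.

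Finally, the triangle inequality for $\operatorname{dist}_q$ (Rieffel~\cite{9}) gives
\[
\operatorname{dist}_q\bigl(F(\mathbb{G}),P_\Lambda F(\mathbb{G})\bigr)\le 0+\operatorname{dist}_q\bigl(\operatorname{Pol}_z(\mathbb{G}),P_\Lambda\operatorname{Pol}_z(\mathbb{G})\bigr)+0,
\]
and the middle term tends to $0$ by hypothesis. The main obstacle I expect is the second step. The truncated objects are operator systems rather than algebras, so one must check that conjugation by $\tilde{\chi}$ really maps $P_\Lambda\Lambda(F(\mathbb{G}))P_\Lambda$ onto $P^z_\Lambda\lambda(\operatorname{Pol}_z)P^z_\Lambda$ and preserves the order unit and the matrix norms. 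I would handle this directly from $\tilde{\chi}P_\Lambda=P^z_\Lambda\tilde{\chi}$ together with the spatial formula above. A further point is that $\chi(x)$ acts on $L^2_z(\mathbb{G})$ rather than on all of $L^2(\mathbb{G})$, so the norm identification should be made via the compression to $L^2_z(\mathbb{G})$, consistent with how $L_{z,\ell}^k$ is defined on the spectral triple $(\operatorname{Pol}_z(\mathbb{G}),L^2_z(\mathbb{G}),D^z_\ell)$.
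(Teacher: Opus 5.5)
Your proposal is correct and follows the same route as the paper. You transport both the full spectral triple and its truncations through the unitary $\tilde{\chi}$ of Proposition~\ref{prop unitary} and its restriction $\tilde{\chi}_\Lambda$ (which exists because $\tilde{\chi}$ intertwines the spectral projections), and you spell out the bridge Lip-norm giving distance zero plus Rieffel's triangle inequality, which the paper compresses into ``unitary equivalence preserves the quantum Gromov--Hausdorff distance''.
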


\begin{proof}
 By Proposition~\ref{prop unitary}, we know 
 $\bigl(\operatorname{Pol}_z(\mathbb{G}), L_{z,\ell}^k\bigr)$ is a compact 
 quantum metric space if and only if $(F(\mathbb{G}), L_{\ell}^k)$ is a compact 
 quantum metric space. Let 
 $\tilde{\chi}_\Lambda^* \colon P_\Lambda(L^2_z(\mathbb{G})) \to 
 P_\Lambda(\ell^2(\operatorname{Irred}(\mathbb{G})))$ be the unitary operator 
 obtained by restricting $\tilde{\chi}^*$ to the truncated subspaces; it is 
 therefore an isometry (in fact, unitary).

 By hypothesis, the  truncations $\bigl(P_\Lambda\operatorname{Pol}_z(\mathbb{G}), L_{z,\ell,\Lambda}^k\bigr)$ of 
 $\bigl(\operatorname{Pol}_z(\mathbb{G}), L_{z,\ell}^k\bigr)$ converge in quantum 
 Gromov--Hausdorff distance.

 From Proposition~\ref{prop unitary}, $\tilde{\chi}$ is a unitary equivalence 
 between the above spectral triples:
 \[
 \tilde{\chi}^{*}\bigl(\operatorname{Pol}_z(\mathbb{G}), L^2_z(\mathbb{G}), D_\ell^z\bigr)
 \tilde{\chi} = \bigl(F(\mathbb{G}), \ell^2(\operatorname{Irred}(\mathbb{G})), 
 \tilde{D}_\ell\bigr).
 \]
 Similarly, $\tilde{\chi}_\Lambda$ provides a unitary equivalence between the 
 truncated versions:
 \[
 \tilde{\chi}_\Lambda^{*}\bigl(P_\Lambda\operatorname{Pol}_z(\mathbb{G}) P_\Lambda,\,
 P_\Lambda L^2_z(\mathbb{G}),\, D_{z,\ell,\Lambda}\bigr) \tilde{\chi}_\Lambda 
 = \bigl(P_\Lambda F(\mathbb{G}) P_\Lambda,\, P_\Lambda\ell^2(\operatorname{Irred}(\mathbb{G})),\,
 \tilde{D}_{\ell,\Lambda}\bigr).
 \]

 Since unitary equivalence preserves the quantum Gromov--Hausdorff distance, 
 we obtain
 \[
 \operatorname{dist}_q\bigl(F(\mathbb{G}), L_{\ell}^k\bigr),\, 
 (P_\Lambda F(\mathbb{G}) , L^k_{\ell,\Lambda})\bigr) \to 0 \qquad as \quad\Lambda\to \infty.
 \]
\end{proof}

Since the spectral-truncated Lipschitz seminorm on the truncated space differs from $L_{\ell,\Lambda}^k$, the above theorem is not directly applicable to spectral truncations. However, by Theorem~\ref{compact quantum metric space} , Theorem~\ref{compact quantum metric space on fusion algebra} and  Corollary~C in \cite{16}, we can still construct compact quantum metric spaces whose Lip-norms are associated with proper length functions. Hence, we obtain examples whose spectral truncations exhibit Gromov--Hausdorff convergence.

\begin{example}\label{ex:spectral-truncation-GH}
   By Corollary~C of \cite{16}, we know that, when equipped with the length function $\ell$ arising from the standard fundamental corepresentation, the pairs
\[
    (C(SU(2)),L_{\ell}),\quad (C(O_{2}^{+}),L_{\ell}),\quad
    (C(SO(3)),L_{\ell}),\quad \text{and}\quad (C(S_{4}^{+}),L_{\ell})
\]
are compact quantum metric spaces.

Moreover, by Theorem~\ref{compact quantum metric space} and Theorem~\ref{compact quantum metric space on fusion algebra}, for every $N\in\mathbb{N}$ there exists $k\in\mathbb{N}$ such that $(SU(N),L_\ell^k)$ and $(SO(N),L_\ell^k)$ are compact quantum metric spaces.

Finally, by Theorem~\ref{theorem Gromov Hausdorff}, the spectral truncations of these quantum compact metric spaces, as well as those of their central function forms, converge in the Gromov--Hausdorff sense.
\end{example}

Now we consider another important class of examples: discrete quantum groups 
with the property of rapid decay. The study of this property for discrete 
quantum groups was initiated by Vergnioux \cite{20}. Building on this work, 
Bhowmick, Voigt, and Zacharias \cite{23} demonstrated that for quantum groups 
possessing the rapid decay property, one can naturally construct compact 
quantum metric spaces. In what follows, we briefly outline this construction.

Let $\hat{\mathbb{G}}$ be a discrete quantum group. We say that $\hat{\mathbb{G}}$ has \emph{property RD} 
with respect to a length function $\ell$ on $\hat{\mathbb{G}}$ if there exist constants 
$C, s > 0$ such that
\[
\| \mathcal{F}(f) \|_{\mathrm{op}} \leq C \| f \|_{2,s}
\]
for all $f \in C_c(\hat{\mathbb{G}})$. We say that $\hat{\mathbb{G}}$ has property RD if it has property RD 
with respect to some length function.

Let $k \in \mathbb{N}$ and define $T = C^{1/(2k)}$. For any 
$a \in \mathbb{C}(\hat{\mathbb{G}})$, set
\[
\delta(a) = [D_\ell, a],
\]
and define
\[
\delta_T(a) = [D_\ell, T a T], \qquad L_T^k(a) = \|\delta_T^k(a)\|.
\]
If $\hat{\mathbb{G}}$ is a discrete quantum group of rapid decay, by \cite{20} we 
know that $(C(\hat{\mathbb{G}}), L_T^k)$ is a compact quantum metric space.

\begin{theorem}\label{theorem GH on discrete}
Let $\hat{\mathbb{G}}$ be a discrete quantum group of rapid decay. Then 
\[
\operatorname{dist}_q\bigl((C(\hat{\mathbb{G}}), L_{T}^k),\, 
(P_\Lambda C(\hat{\mathbb{G}}) P_\Lambda,L_{C(\hat{\mathbb{G}})_\Lambda}^{\alpha^\tau,\beta^\tau})\bigr) \to 0 \qquad as \quad\Lambda\to \infty
\]
\end{theorem}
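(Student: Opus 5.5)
The plan is to run the argument of Theorem~\ref{theorem Gromov Hausdorff} in the setting of $C(\hat{\mathbb{G}})$. Here $\hat{\mathbb{G}}$ is viewed as the dual of a compact quantum group, so $C(\hat{\mathbb{G}})$ carries a comultiplication, a Haar state and a counit. The Dirac operator $D_\ell$ is diagonal on the matrix coefficients, and $P_\Lambda$ is its spectral projection.

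First, we take as input that $(C(\hat{\mathbb{G}}),L_T^k)$ is a compact quantum metric space; this is the cited result of \cite{20,23} for groups with property RD. Next we check bi-invariance of $L_T^k$, that is,
\[
L_T^k\bigl((\varphi\otimes\mathrm{id})\Delta(a)\bigr)\le \|\varphi\|\,L_T^k(a),
\]
and the analogous inequality for $(\mathrm{id}\otimes\varphi)\Delta$. The proof copies Lemma~\ref{lemma left}: $\mathrm{id}\otimes D_\ell$ commutes with $W$ and $\Sigma V\Sigma$. The new point is that $T$ is a constant multiple of the identity, so $\delta_T^k(a)=T^{2k}\delta^k(a)$. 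Conjugation by $W$ therefore passes through exactly as before, and the scalar factors out.

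Second, we use the compression $\tau=\tau_\Lambda$ and the coactions $\alpha^\tau,\beta^\tau$ from \cite{6}. These are defined purely from $\Delta$ and $P_\Lambda$, so nothing changes in this setting. We then define the seminorms $L^{\alpha^\tau,\beta^\tau}_{C(\hat{\mathbb{G}})_\Lambda}$ with $L_A=L_T^k$, and the symbol maps $\sigma^\phi$. Repeating the proof of the lemma preceding Theorem~\ref{theorem Gromov Hausdorff}, with bi-invariance as input, gives:
\begin{itemize}
\item $\tau$ and $\sigma^\phi$ are unital and positive;
\item $L^{\alpha^\tau,\beta^\tau}(\tau(a))\le L_T^k(a)$;
\item $L_T^k(\sigma^\phi(x))\le L^{\alpha^\tau,\beta^\tau}(x)$.
\end{itemize}
This gives condition (1) of Proposition~\ref{prop:gromov hausdorff converges} with $C=C'=1$.

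Third, we apply the estimates from \cite[Prop.~6.8]{6}, as in the proof of Theorem~\ref{theorem Gromov Hausdorff}. They give
\[
\|\sigma^\phi\tau(a)-a\|\le 2\,d^{L_T^k}(\tau^*\phi,\epsilon)\,L_T^k(a),
\qquad
\|\tau\sigma^\phi(x)-x\|\le 2\,d^{L_T^k}(\tau^*\phi,\epsilon)\,L^{\alpha^\tau,\beta^\tau}(x).
\]
With $C=C'=1$, Proposition~\ref{prop:gromov hausdorff converges} then bounds $\operatorname{dist}_q$ by $2\,d^{L_T^k}(\tau^*\phi_\Lambda,\epsilon)$. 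So it suffices to choose states $\phi_\Lambda\in S(C(\hat{\mathbb{G}})_\Lambda)$ with $d^{L_T^k}(\tau^*\phi_\Lambda,\epsilon)\to0$.

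That choice is the main obstacle. Since $L_T^k$ is a Lip-norm, $d^{L_T^k}$ metrizes the weak$^*$ topology, so weak$^*$ convergence $\tau^*\phi_\Lambda\to\epsilon$ is enough. I would use Lemma~\ref{lem:approx_state}, or \cite[Cor.~7.12]{6} directly, to get finite convex combinations of vector states $\langle\xi,\cdot\,\xi\rangle$ approximating $\epsilon$. The vectors $\xi$ are taken in $\Lambda(\operatorname{Pol})$, so each is supported on finitely many isotypic components and hence lies in $P_\Lambda\mathcal H$ for $\Lambda$ large. A vector state with $\xi\in P_\Lambda\mathcal H$ satisfies $\langle\xi,a\xi\rangle=\langle\xi,\tau(a)\xi\rangle$, so it is of the form $\tau^*\phi$ with $\phi$ a state on $C(\hat{\mathbb{G}})_\Lambda$.

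The delicate point is whether $\epsilon$ lies in the weak$^*$ closure of the Haar-GNS vector states. This is automatic under coamenability, but the statement does not assume it. I would first try to show that property RD forces the required approximation; for genuinely discrete group duals $C^*_r(\Gamma)$, RD with a polynomial-growth-type control gives amenability-type approximation only in special cases. Failing that, I would instead replace $\epsilon$ by any fixed state realised at finite level, such as the Haar state $h$. The estimate $\|\sigma^\phi\tau(a)-a\|$ only needs \emph{some} state that the slice maps fix, so the argument can be reorganized around a state $\psi$ with $(\psi\otimes\mathrm{id})\Delta=\mathrm{id}$ approximated within $P_\Lambda$.
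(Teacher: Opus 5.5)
Your three steps reproduce the paper's proof, which is a one-line instruction to repeat the argument of Theorem~\ref{theorem Gromov Hausdorff}. The gap is exactly where you locate it, and it cannot be closed along the lines you suggest.

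\textbf{The approximation step is equivalent to coamenability.} Suppose states $\phi_\Lambda$ on $C(\hat{\mathbb{G}})_\Lambda$ give $\|\sigma^{\phi_\Lambda}\tau(a)-a\|\le\varepsilon_\Lambda L_T^k(a)$ with $\varepsilon_\Lambda\to0$. Apply this to a matrix coefficient $u^\alpha_{ij}$. It says $\sum_m\bigl(\tau^*\phi_\Lambda(u^\alpha_{im})-\delta_{im}\bigr)u^\alpha_{mj}\to0$ in norm. Since $u^\alpha_{1j},\dots,u^\alpha_{d_\alpha j}$ are linearly independent, this gives $\tau^*\phi_\Lambda(u^\alpha_{im})\to\delta_{im}$. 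So the counit is a pointwise limit of states on $\operatorname{Pol}$, hence extends to a state on $C(\hat{\mathbb{G}})$. That is coamenability.

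\textbf{Property RD does not supply this.} For discrete groups, RD together with amenability already forces polynomial growth (Jolissaint). The standard RD examples are not amenable: free groups, non-elementary hyperbolic groups, and in the quantum setting the duals of $O_N^+$ for $N\ge3$. For these, $\epsilon$ is unbounded on $C(\hat{\mathbb{G}})$. Then $d^{L_T^k}(\tau^*\phi,\epsilon)$ has no meaning, and the approximation from \cite[Cor.~7.12]{6} used in Theorem~\ref{theorem Gromov Hausdorff} is unavailable.

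\textbf{Your fallback is circular.} If you use the Haar state, $(h\otimes\mathrm{id})\Delta(a)=h(a)1$, so the slice map does not return $a$ and no estimate of $\|\sigma^\phi\tau(a)-a\|$ follows. Any state $\psi$ with $(\psi\otimes\mathrm{id})\Delta=\mathrm{id}$ satisfies $\psi(u^\alpha_{im})=\delta_{im}$ by the same linear-independence argument. Such a $\psi$ is therefore the bounded counit, and its existence is again coamenability.

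\textbf{What this leaves.} Your argument, and the paper's, proves the statement only with an added coamenability hypothesis. Under RD this essentially means the polynomial-growth case. For non-amenable RD quantum groups you would need comparison maps other than slices by states converging to $\epsilon$. The paper's one-line proof does not address this either.
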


\begin{proof}
    The result follows by repeating the argument in the proof of 
    Theorem~\ref{theorem Gromov Hausdorff}.
\end{proof}

\begin{example}
    If $\hat{\mathbb{G}}$ is a discrete quantum group of rapid decay, by \cite{20} we know that $(C(\hat{\mathbb{G}}), L_T^k)$ is 
    a compact quantum metric space and $T$ commutes with $D$. So $L_T^k$ is 
    bi-invariant. By Theorem~\ref{theorem GH on discrete}, their spectral 
    truncation converges in the Gromov--Hausdorff distance.
\end{example}

\bibliographystyle{amsplain}

\end{document}